\documentclass[12pt, reqno]{amsart}
\usepackage{amsmath, amsthm, amscd, amsfonts, amssymb, graphicx, xcolor}
\usepackage[bookmarksnumbered, colorlinks, plainpages]{hyperref}
\usepackage{tikz}
\usetikzlibrary{arrows.meta, positioning}
\usepackage{tikz-cd}
\usepackage{enumitem}
\usepackage{xcolor}
\definecolor{RED}{named}{red}

\newtheorem{theorem}{Theorem}[section]
\newtheorem{lemma}[theorem]{Lemma}
\newtheorem{proposition}[theorem]{Proposition}
\newtheorem{corollary}[theorem]{Corollary}
\theoremstyle{definition}
\newtheorem{definition}[theorem]{Definition}
\newtheorem{example}[theorem]{Example}

\theoremstyle{remark}
\newtheorem{remark}[theorem]{Remark}
\numberwithin{equation}{section}

\theoremstyle{theorem}
\newtheorem*{theorem*}{Theorem}
\newtheorem*{proposition*}{Proposition}
\newtheorem*{corollary*}{Corollary}
\newtheorem*{lemma*}{Lemma}
\newtheorem*{example*}{Example}
\theoremstyle{remark}
\newtheorem*{remark*}{Remark}

\newcommand{\grouphull}[1]{\left\langle #1 \right\rangle}

\def\subfin{{\operatorname{Sub_{fin}}}}
\def\subfinnorm{{\operatorname{Sub_{fin}^{\triangleleft}}}}

\def\Eig{{\rm Eig}}
\newcommand{\envelopingOdometer}[1]{\mathcal{O}(#1)}

\newcommand{\upperhull}[1]{#1^{\mathfrak{u}}}
\newcommand{\conhull}[1]{#1^{\mathfrak{c}}}
\newcommand{\filterhull}[1]{#1^{\mathfrak{f}}}
\usepackage{stmaryrd}
\newcommand{\eigenhull}[1]{\left\llbracket #1 \right\rrbracket}

\newcommand{\allEigensets}[1]{\mathfrak{E}(#1)}
\newcommand{\allFilteredEigensets}[1]{\mathfrak{E}_\mathfrak{F}(#1)}

\begin{document}
\color{black}

\title
[Minimal Equicontinuous Actions on Stone Spaces]
{Minimal Equicontinuous Actions on Stone Spaces}

\author[]{Mar\'{\i}a Isabel Cortez}
\address{Facultad de Matem\'aticas, Pontificia Universidad Cat\'olica de Chile. Edificio Rolando Chuaqui, Campus San Joaquín. Avda. Vicuña Mackenna 4860, Macul, Chile.}
\email{maria.cortez@uc.cl}

\author[]{Till Hauser}
\address{Facultad de Matem\'aticas, Pontificia Universidad Cat\'olica de Chile. Edificio Rolando Chuaqui, Campus San Joaquín. Avda. Vicuña Mackenna 4860, Macul, Chile.}
\email{hauser.math@mail.de}

\thanks{This article was funded by the Deutsche Forschungsgemeinschaft (DFG, German Research Foundation) – 530703788.}

\begin{abstract}
    In this article we study minimal equicontinuous actions on Stone spaces, which we call \emph{subodometers}, and do neither assume that the space is metrizable, nor any assumptions on the acting group. We show that the set of eigenvalues is a complete invariant for subodometers. Furthermore, we characterize minimal rotations on Stone spaces, which we call \emph{odometers}, via the intersection stability of their sets of eigenvalues. We show that any non-empty family of odometers allows for a \emph{minimal common extension} and a \emph{maximal common factor}, that both are odometers and that they are unique up to conjugacy. We provide examples that a similar statement does not hold for subodometers. 
    
    We show that subodometers are given as inverse limits of minimal finite actions, that odometers are given as inverse limits of minimal finite rotations, and present how the minimal common extension and the maximal common factor of a non-empty family of odometers can be represented as an inverse limit. 
    
    We establish that a minimal action $X$ is a subodometer if and only if its Ellis semigroup $E(X)$ is an odometer, and present how an inverse limit representation of $E(X)$ can be derived from the representation of $X$. Furthermore, we establish the existence of a universal odometer that has all subodometers as factors; as well as the existence of a maximal subodometer factor, and a maximal odometer factor of a given minimal action.
\newline
\newline
\noindent \textit{Keywords.} 
Action,
Subodometer, 
Odometer, 
Rotation, 
Stone Space,
Eigenvalue, 
Ellis semigroup.  
\newline
\noindent \textit{2020 Mathematics Subject Classification.} 
Primary {37B05}; 
Secondary {54H15}, {20E18}. 
\end{abstract} \maketitle




\section{Introduction}
Minimal equicontinuous actions of a group $G$ on compact Hausdorff spaces form an essential part of the theory of topological dynamics. 
They were characterized by Auslander in \cite{auslander1988minimal} as actions of $G$ by translations on quotients $K/F$, where $K$ is a compactification of $G$ and $F$ is a closed subgroup of $K$. When $G$ is Abelian, every minimal equicontinuous system is a group rotation. However, when $G$ is non-Abelian, minimal equicontinuous systems that are not group rotations may arise. 

For $G=\mathbb{Z}$ a natural class of minimal rotations is given by the odometers, which are defined as inverse limits along sequences of finite minimal rotations \cite{downarowicz2005survey}. 
It is well-known that odometers $(X,\mathbb{Z})$ are (up to conjugacy) the minimal rotations of $\mathbb{Z}$ on metrizable Stone\footnote{
    A compact Hausdorff space is called \emph{Stone} if it is totally disconnected. 
    If $(X,G)$ is a minimal action, then $X$ is a metrizable Stone space if and only if it is finite or the Cantor space.} 
spaces \cite{hewitt1979abstract, downarowicz2005survey} and that similar statements hold for actions of finitely generated groups, such as $\mathbb{Z}^d$ \cite{cortez2006ZdToeplitz, cortez2008Godometers}. 

If $G$ is finitely generated, but not necessarily Abelian, a minimal equicontinuous action of $G$ on a Stone space is not necessarily a rotation. 
It follows from \cite{cortez2008Godometers, cortez2016orbit} that minimal equicontinuous actions on metrizable Stone spaces are given as inverse limits along sequences of finite minimal actions. 
Actions of this type are called \emph{subodometers} in \cite{cortez2008Godometers}. 
We will see in Section \ref{sec:metrizableSubodometers} that any minimal equicontinuous action of a residually finite group on a Stone space is metrizable. This observation allows us to extend the definitions of \cite{cortez2008Godometers} as follows. 

\begin{definition}
    Let $G$ be a group. 
    A minimal action $(X,G)$ on a Stone space is called
    \begin{itemize}
        \item[(i)] \emph{odometer} if $(X,G)$ is a group rotation. 
        \item[(ii)] \emph{subodometer} if $(X,G)$ is equicontinuous,
    \end{itemize}
\end{definition}

Note that unlike previous works, such as for example \cite{cortez2008Godometers, cortez2016orbit, giordano2019ZdOdometers, hurder2021limit, hurder2023prime}, we do not restrict ourselves to the case in which the phase space is metrizable, nor to the case in which the acting group is countable. 
Indeed, as we will see in Section \ref{sec:metrizableSubodometers}, the group $G$ admits a non-metrizable subodometer if and only if the collection of finite index subgroups of $G$ is uncountable. 
Examples of such groups are given by uncountable products of non-trivial finite groups, or a free group with infinitely many generators.  

In Sections \ref{sec:subodometersGeneratedByScales} and \ref{sec:metrizableSubodometers} we show that subodometers are inverse limits along nets of finite minimal actions, while metrizable subodometers are represented by the inverse limits along sequences of finite minimal actions. 
Analogous statements hold for odometers and finite minimal rotations as we will see in Section \ref{sec:odometers}.
Thus, odometers are actions on profinite groups \cite{ribes2000profinite}. 
It is important to observe that two odometers can have the same profinite group as a phase space, while being completely different as actions. For details, see Example \ref{exa:finfty} below.

In order to achieve these statements we develop a characterization of subodometers in terms of their uniformities in Section \ref{sec:entouragesEquivalenceRelationsAndGTiles}. 
To give a characterization of subodometers $(X,G)$ in terms of their topology we then explore the following notion. 
An open subset $B\subseteq X$ is called a $G$-tile if $\{g.B;\, g\in G\}$ is a partition of $X$. 
We then show that a minimal action $(X,G)$ is a subodometer if and only if the topology of $X$ allows for a base of $G$-tiles. 

It is natural to ask for a complete conjugacy invariant that allows to simplify the study of the category of subodometers and their respective factor maps. Such an invariant is given by the following.  
For an action $(X,G)$ we denote $\Eig(X,G)$ for the set of all finite index subgroups $\Gamma$ of $G$ for which there exists a factor map $\pi\colon X\to G/\Gamma$. 
In Section \ref{sec:eigenvalues} we show the following. 

\begin{theorem*}[Proposition \ref{pro:factorsOfSubodometers} and Theorem \ref{the:eigConjugacyInvariant}]
\leavevmode
    \begin{itemize}
        \item Any factor of a subodometer is also a subodometer.
        \item Let $(X,G)$ be a minimal action. 
        A subodometer $(Y,G)$ is a factor of $(X,G)$ if and only if $\Eig(Y,G)\subseteq \Eig(X,G).$
        \item Two subodometers are conjugated if and only if they have the same eigenvalues. 
    \end{itemize}     
\end{theorem*}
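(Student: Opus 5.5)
The plan is to reduce everything to the tile characterization announced in the introduction: a minimal action $(X,G)$ is a subodometer if and only if $X$ has a base of $G$-tiles. For a clopen $G$-tile $B$ with stabilizer $\Gamma_B=\{g\in G;\, g.B=B\}$, the map $X\to G/\Gamma_B$, $x\mapsto g\Gamma_B$ for $x\in g^{-1}.B$ (with the appropriate side convention), is a factor map. Hence $\Gamma_B\in\Eig(X,G)$. Conversely, every factor map $\pi\colon X\to G/\Gamma$ gives the $G$-tile $\pi^{-1}(\Gamma)$ with stabilizer $\Gamma$. So $\Eig(X,G)$ is exactly the set of stabilizers of clopen $G$-tiles of $X$.

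\textbf{Factors.} Let $\pi\colon X\to Y$ be a factor map with $X$ a subodometer. Equicontinuity passes to factors of compact systems: the image of the uniformity under a continuous surjection of compact Hausdorff spaces is the uniformity of $Y$, and a $G$-invariant entourage base pushes forward. Minimality is also inherited. The remaining point is that $Y$ is Stone. I would take the closed invariant equivalence relation $R_\pi$ on $X$. Since $X$ has a uniform base of open $G$-invariant equivalence relations (the partitions into translates of tiles, Section \ref{sec:entouragesEquivalenceRelationsAndGTiles}), it suffices to show that $R_\pi$ is an intersection of such clopen invariant equivalence relations. For this, given $x,x'$ with $\pi(x)\neq\pi(x')$, choose disjoint neighbourhoods in $Y$ and use compactness together with invariance to find a tile partition of $X$ saturated for $R_\pi$ and separating $x,x'$. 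This is the main technical step. I would try averaging: take a tile partition $\mathcal P$ of $X$ that is finer than $\pi^{-1}$ of a small uniform entourage of $Y$. Then the relation ``$\pi$-images lie in the same element of the $G$-invariant equivalence relation on $Y$ generated by $\pi(\mathcal P)$'' is closed and has finitely many classes, since $\mathcal P$ is finite. So it is clopen, and it separates the two points once the entourage is small enough.

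\textbf{Factor criterion.} The forward direction is immediate, because factor maps compose, so $\Eig(Y,G)\subseteq\Eig(X,G)$. For the converse, $Y\cong\varprojlim G/\Gamma_B$ over the directed family of tiles $B$ of $Y$, which is the inverse limit representation of Section \ref{sec:subodometersGeneratedByScales}. Each $\Gamma_B$ lies in $\Eig(X,G)$, so there are factor maps $\pi_B\colon X\to G/\Gamma_B$. The obstacle is compatibility, i.e.\ making the $\pi_B$ commute with the bonding maps. I would fix a point $x_0\in X$. Each $\pi_B$ can be post-composed with a $G$-map of $G/\Gamma_B$ given by right multiplication by some $h$, which replaces $\Gamma_B$ by $h^{-1}\Gamma_B h$. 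Since the eigenvalue set is closed under conjugation, the choice can be normalized so that $\pi_B(x_0)=\Gamma_B$ exactly when we choose the conjugate correctly. More carefully, I would let $y_0$ be a point of $Y$ and define $\Gamma_B$ via tiles containing $y_0$. Applying $\pi_B$ to $x_0$ and translating, we may assume $\pi_B(x_0)=\Gamma_B$ by replacing $B$ with a translate, so that the tile family is indexed by the clopen tiles containing $y_0$. Uniqueness of a factor map onto a finite orbit $G/\Gamma$ sending $x_0$ to a prescribed point gives $\pi_{B}=p_{B'B}\circ\pi_{B'}$ whenever $B'\subseteq B$. Hence the $\pi_B$ induce a continuous equivariant map $X\to\varprojlim G/\Gamma_B\cong Y$, which is surjective by minimality.

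\textbf{Conjugacy.} Conjugate systems clearly have equal eigenvalues. Conversely, if $\Eig(X,G)=\Eig(Y,G)$, the previous item gives factor maps $X\to Y$ and $Y\to X$ which, after the base-point normalization, send $x_0\mapsto y_0\mapsto x_0$. The composite $X\to X$ is then a factor map fixing $x_0$. A self-factor map of a minimal system fixing a point agrees with the identity on the dense orbit of $x_0$, hence everywhere. So both maps are conjugacies.
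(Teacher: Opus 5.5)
\paragraph{Stone-ness of factors.} The step ``it separates the two points once the entourage is small enough'' is where all the content lies, and it is not justified. The classes of the equivalence relation on $Y$ generated by the finite closed cover $\pi(\mathcal P)$ are unions of \emph{chains} of overlapping $\epsilon$-small sets. Nothing in your argument bounds the length of such chains, and invariance of $\mathcal P$ does not help with this. For a continuous surjection from a Stone space onto a connected space (e.g.\ the Cantor set onto $[0,1]$), this relation has a single class for every $\mathcal P$, and your argument would apply verbatim.

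The missing ingredient, which the paper uses, is that factor maps between minimal equicontinuous actions are open \cite[Theorem 7.3]{auslander1988minimal}. Since $\pi$ is also closed, it maps clopen sets to clopen sets. Hence the images of a clopen base of $X$ form a clopen base of $Y$, and no averaging is needed. Openness is also what makes your remark about pushing the uniformity forward true. For a general continuous surjection, $(\pi\times\pi)(\delta)$ need not be a neighbourhood of $\Delta_Y$.

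\paragraph{The factor criterion.} The base-point normalization does not work as described. If $\pi_B(x_0)=h_B\Gamma_B$, the normalized map lands in $G/\Gamma_B^{h_B}$. But $\Gamma_B^{h_B}$ is the stabilizer of the translate $h_B.B$, which does not contain $y_0$ unless $h_B\in\Gamma_B$. Since $h_B$ depends on $B$, the normalized tiles are neither a neighbourhood base at one point nor nested. So the bonding maps $p_{B'B}$ and your uniqueness argument no longer apply.

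For arbitrary $x_0,y_0$ the normalization can be impossible. Take $X=Y=G/\Gamma$ with $\Gamma$ not normal, $x_0=\Gamma$, and $y_0=g\Gamma$ with $\Gamma^g\neq\Gamma$. The tile $\{y_0\}$ has stabilizer $\Gamma^g$. A factor map $X\to G/\Gamma^g$ with $x_0\mapsto\Gamma^g$ would force $\Gamma\subseteq\Gamma^g$, which is impossible for subgroups of equal index.

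What you need is a single $x_0$ with $S:=\Eig_{y_0}(Y,G)\subseteq\Eig_{x_0}(X,G)$. Finding it requires compactness, and this is the key step of the paper's proof. For $\Gamma\in S$, let $A_\Gamma$ be the set of all $x\in X$ admitting a factor map $X\to G/\Gamma$ with $x\mapsto\Gamma$. Then:
\begin{itemize}
\item $A_\Gamma$ is non-empty, because $\Gamma\in\Eig(X,G)$;
\item $A_\Gamma$ is closed, because there are at most $[G\colon\Gamma]$ factor maps onto $G/\Gamma$;
\item $A_\Gamma\subseteq A_\Lambda$ whenever $\Gamma\subseteq\Lambda$.
\end{itemize}
Since $S$ is a scale, these sets have the finite intersection property, so they have a common point $x_0$. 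For that $x_0$, your uniqueness argument does give compatibility.

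\paragraph{The conjugacy part.} The round trip $x_0\mapsto y_0\mapsto x_0$ is asserted, not proved. Even a correct based criterion gives $\pi$ with $\pi(x_0)=y_0$ and some factor map $\psi\colon Y\to X$, but not $\psi(y_0)=x_0$. You can close this in either of two ways. One is to invoke coalescence of minimal equicontinuous actions, as the paper does. The other is to prove that filters $F_1\subseteq F_2$ with $\conhull{F_1}=\conhull{F_2}$ coincide, and apply this to $\Eig_{\pi(x_0)}(Y,G)\subseteq\Eig_{x_0}(X,G)$.
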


We will see that $\Eig$ naturally reflects properties of subodometers. 
In Section~\ref{sec:metrizableSubodometers} we show that a subodometer $(X,G)$ is metrizable if and only if $\Eig(X,G)$ is countable. 
Furthermore, in Section \ref{sec:odometers} we prove that a subodometer $(X,G)$ is an odometer if and only if $\Eig(X,G)$ is closed under finite intersections.
These observations will allow us to show the following in Section \ref{sec:eigensets}.

\begin{theorem}
\label{the:INTROOdometerLattice}
    Let $\mathfrak{X}$ be a non-empty family of odometers. 
    There exist minimal actions $\bigvee \mathfrak{X}$ and $\bigwedge \mathfrak{X}$ such that
    \begin{itemize}
        \item[(a$_1$)] $\bigvee\mathfrak{X}$ is an extension of all $(X,G)\in \mathfrak{X}$. 
        \item[(a$_2$)] $\bigwedge\mathfrak{X}$ is a factor of all $(X,G)\in \mathfrak{X}$.
        \item[(b$_1$)] Any minimal action $(Y,G)$ that is an extension of all $(X,G)\in \mathfrak{X}$ is an extension of $\bigvee \mathfrak{X}$. 
        \item[(b$_2$)] Any minimal action $(Y,G)$ that is a factor of all $(X,G)\in \mathfrak{X}$ is a factor of $\bigwedge \mathfrak{X}$. 
    \end{itemize}    
    With respect to the specified properties $\bigvee\mathfrak{X}$ and $\bigwedge \mathfrak{X}$ are unique up to conjugacy, and both are odometers. 
\end{theorem}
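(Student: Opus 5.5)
We work entirely on the level of eigenvalue sets, using the results announced above: factors of subodometers are subodometers; a subodometer $(Y,G)$ is a factor of a minimal $(X,G)$ iff $\Eig(Y,G)\subseteq\Eig(X,G)$; subodometers are determined up to conjugacy by $\Eig$; and a subodometer is an odometer iff $\Eig$ is closed under finite intersections. We also use the realization result implicit in the inverse limit description: for a family $\mathcal{S}$ of finite index subgroups that is closed under finite intersections and upward closed (a filter in the subgroup lattice), the inverse limit of the finite rotations $G/\Gamma$, $\Gamma\in\mathcal{S}$, is an odometer $O(\mathcal{S})$ with $\Eig(O(\mathcal{S}),G)=\mathcal{S}$. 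Here $\Gamma\mapsto\Gamma'$ for $\Gamma\subseteq\Gamma'$ gives the bonding maps $G/\Gamma\to G/\Gamma'$. The inclusion $\mathcal{S}\subseteq\Eig$ is clear. For the reverse, a factor map to $G/\Delta$ factors through some $G/\Gamma$ by a tile/clopen argument, which forces $\Gamma\subseteq g\Delta g^{-1}$; a conjugate of $\Delta$ lies in $\mathcal{S}$ and hence $\Delta$ is an eigenvalue. I expect this to be Section \ref{sec:odometers} material, and I will first check that the eigenvalue set of an odometer is indeed such a filter (conjugation invariant, upward closed, intersection closed).

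For $\bigvee\mathfrak{X}$, let $\mathcal{S}_\vee$ be the set of finite index subgroups containing some finite intersection $\Gamma_1\cap\dots\cap\Gamma_n$ with $\Gamma_i\in\Eig(X_i,G)$ and $X_i\in\mathfrak{X}$. This is the filter generated by $\bigcup_{X\in\mathfrak X}\Eig(X,G)$, and we set $\bigvee\mathfrak{X}=O(\mathcal{S}_\vee)$. Property (a$_1$) holds because $\Eig(X,G)\subseteq\mathcal{S}_\vee$. For (b$_1$), let $(Y,G)$ be a minimal common extension. Then $\Eig(Y,G)\supseteq\Eig(X,G)$ for all $X$. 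For $\Gamma_1,\dots,\Gamma_n\in\Eig(Y,G)$, minimality of $Y$ lets us choose factor maps $\pi_i$ to $G/\Gamma_i$ with $\pi_i(y_0)=\Gamma_i$. The orbit closure of $(\Gamma_i)_i$ in $\prod G/\Gamma_i$ is then $G/\bigcap\Gamma_i$, so $\bigcap\Gamma_i\in\Eig(Y,G)$. Upward closure follows by composing with the quotient $G/\Gamma\to G/\Gamma'$. Hence $\mathcal{S}_\vee\subseteq\Eig(Y,G)$, and the factor criterion gives $Y\to\bigvee\mathfrak{X}$.

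For $\bigwedge\mathfrak{X}$, set $\mathcal{S}_\wedge=\bigcap_{X\in\mathfrak X}\Eig(X,G)$. An intersection of filters is a filter, since all are intersection closed and upward closed, so we put $\bigwedge\mathfrak{X}=O(\mathcal{S}_\wedge)$; it is nonempty because $G$ belongs to every eigenvalue set. Property (a$_2$) follows from the factor criterion. For (b$_2$), a minimal common factor $Y$ of an odometer is a subodometer by the first bullet, and $\Eig(Y,G)\subseteq\Eig(X,G)$ for every $X$ because factor maps compose. Thus $\Eig(Y,G)\subseteq\mathcal{S}_\wedge$, and the criterion applies again, now with $\bigwedge\mathfrak X$ in the role of the minimal action.

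For uniqueness, two actions with property (b$_1$) are mutual factors. Both are odometers, and hence subodometers, so they have equal $\Eig$ and are conjugate by Theorem \ref{the:eigConjugacyInvariant}. The same argument applies to $\bigwedge$. The step I expect to be the main obstacle is the realization lemma: identifying $\Eig(O(\mathcal{S}),G)$ exactly with $\mathcal{S}$, and in particular excluding extra eigenvalues. If this is not available earlier in the paper, I would prove it directly via $G$-tiles in the inverse limit. A clopen $G$-tile partition of $O(\mathcal{S})$ is, by compactness, pulled back from some finite level $G/\Gamma$. Its stabilizer therefore contains a conjugate of $\Gamma$, and hence lies in $\mathcal{S}$.
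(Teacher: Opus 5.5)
\textbf{There is a genuine gap in your argument for (b$_1$).}

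You assert that for arbitrary $\Gamma_1,\dots,\Gamma_n\in\Eig(Y,G)$, minimality of $Y$ lets you choose factor maps $\pi_i\colon Y\to G/\Gamma_i$ with $\pi_i(y_0)=\Gamma_i$. Minimality only says that $\pi_i$ is determined by the value $\pi_i(y_0)=g_i\Gamma_i$. To move $g_i\Gamma_i$ to $\Gamma_i$ you need a conjugacy $G/\Gamma_i\to G/\Gamma_i$, and this exists only if $\Gamma_i^{g_i}=\Gamma_i$. In general you only get $\Gamma_i^{g_i}\in\Eig_{y_0}(Y,G)$.

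The intermediate claim that $\Eig(Y,G)$ is intersection closed for every minimal $(Y,G)$ is false. By Theorem~\ref{the:characterizationOdometers} it would make every subodometer an odometer. Concretely, in Example~\ref{exa:EigensetsNoLattice} you have $\Gamma_1,\Gamma_2\in\eigenhull{\{\Lambda_1\}}=\Eig(S_5/\Lambda_1,S_5)$, while $\Gamma_1\cap\Gamma_2=\Lambda_2\notin\eigenhull{\{\Lambda_1\}}$.

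A warning sign is that your proof of (b$_1$) never uses that the members of $\mathfrak X$ are odometers. Your $\mathcal S_\vee$ is a conjugation invariant filter for any family of subodometers. So the argument would produce a minimal common extension of $S_5/\Gamma_1$ and $S_5/\Gamma_2$, which that example rules out.

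\textbf{How to repair it.} The odometer hypothesis has to enter exactly here. Fix $y_0\in Y$ and factor maps $\phi_X\colon Y\to X$ for $X\in\mathfrak X$.
Since $X$ is an odometer, $\Eig(X,G)=\Eig_{\phi_X(y_0)}(X,G)$ by Theorem~\ref{the:characterizationOdometers}(viii), which is regularity. Composing with $\phi_X$ gives $\Eig(X,G)\subseteq\Eig_{y_0}(Y,G)$ for every $X$.
The set $\Eig_{y_0}(Y,G)$ is a filter by Theorem~\ref{the:scalesForSubodometersViaEigx}(i). Your product-orbit argument proves exactly this once all base points are normalized at the same $y_0$. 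Hence $\mathcal S_\vee\subseteq\Eig_{y_0}(Y,G)\subseteq\Eig(Y,G)$.

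The paper reaches the same conclusion via normal cores (Lemma~\ref{lem:filterHullProperties}(iv) and Proposition~\ref{pro:explicitSupremum}). For $\Gamma$ in a filtered eigenset, $\Gamma_G$ also lies in it. For a normal subgroup, a containment up to conjugacy $\Lambda\subseteq\Gamma_G^g$ is an honest containment $\Lambda\subseteq\Gamma_G\subseteq\Gamma$.

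\textbf{Two smaller points.}
First, your realization lemma needs $\mathcal S$ to be conjugation invariant. For a general filter, the $\mathcal S$-subodometer has eigenvalues $\conhull{\mathcal S}$ by Proposition~\ref{pro:scaleEigensetTransition}, and $G/\Gamma$ is a rotation only for normal $\Gamma$. So you must check conjugation invariance of $\mathcal S_\vee$ and $\mathcal S_\wedge$, which is easy.
Second, in the uniqueness step a competitor $Z$ is not known in advance to be an odometer. It is, however, a factor of $\bigvee\mathfrak X$ and hence a subodometer by Proposition~\ref{pro:factorsOfSubodometers}, and that suffices.

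The rest of your argument, namely (a$_1$), (a$_2$) and (b$_2$), matches the paper's eigenset argument.
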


\begin{definition}
    Let $\mathfrak{X}$ be a non-empty family of odometers. 
    $\bigvee\mathfrak{X}$ is called the \emph{minimal common extension} (also \emph{supremum}) of $\mathfrak{X}$ and $\bigwedge \mathfrak{X}$ is called the \emph{maximal common factor} (also \emph{infimum}) of $\mathfrak{X}$. 
    For odometers $X,X'$ we abbreviate $X\vee X':=\bigvee \{X,X'\}$ and $X\wedge X':=\bigwedge \{X,X'\}$.     
\end{definition}

\begin{remark}
    We will see in Example \ref{exa:EigensetsNoLattice} that a similar statement as in Theorem \ref{the:INTROOdometerLattice} does not hold for non-empty families of subodometers.    
\end{remark}

It is well-known that normal subgroups satisfy the modular law, which we will discuss in Subsection \ref{subsec:eigensets_Modularity} in detail. We show that this modular law is inherited by the odometers, i.e.\ the following.  

\begin{theorem}
    \label{the:INTROmodularLattice}
    For odometers $X_1,X_2$ and $X$ for which $X_1$ is a factor of $X_2$ we have 
    \[(X_1 \vee X)\wedge X_2 = X_1 \vee (X\wedge X_2).\]
\end{theorem}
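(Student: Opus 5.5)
The plan is to translate the statement into one about sets of finite index subgroups via $\Eig$, and then reduce it to the modular law for normal subgroups. By the results quoted in the introduction, a subodometer is determined up to conjugacy by $\Eig$, and $Y$ is a factor of $X$ if and only if $\Eig(Y,G)\subseteq\Eig(X,G)$. Moreover, a subodometer is an odometer if and only if $\Eig$ is closed under finite intersections. It therefore suffices to compute $\Eig(X\vee X')$ and $\Eig(X\wedge X')$ in terms of $\Eig(X)$ and $\Eig(X')$, and then to check that the two sides of the claimed identity have the same eigenvalues.

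\textbf{Step 1: eigenvalues of $\vee$ and $\wedge$.} First I would record what Section~\ref{sec:eigensets} provides for odometers.
\begin{itemize}
\item $\Eig(X\wedge X')=\Eig(X)\cap\Eig(X')$. This set is intersection-closed, and it is the largest eigenset contained in both, by the factor criterion.
\item $\Eig(X\vee X')$ is the smallest admissible eigenset containing $\Eig(X)\cup\Eig(X')$. Concretely, it is the set of finite index $\Gamma$ containing some $\Gamma_1\cap\Gamma_2$ with $\Gamma_i\in\Eig(X_i)$.
\end{itemize}
For the second point one must check that this upward/intersection closure is again an eigenset of an odometer. This is presumably how $\bigvee$ is constructed.

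I will also use that, for an odometer, $\Eig$ is determined by its normal members. Every $\Gamma\in\Eig(X)$ contains its normal core, which still has finite index and lies in $\Eig(X)$. Hence $\Eig(X)$ is the upward closure of the filter $N(X)=\Eig(X)\cap\subfinnorm(G)$. With this, $N(X\vee X')$ consists of the finite index normal subgroups containing some $M\cap M'$ with $M\in N(X)$ and $M'\in N(X')$.

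\textbf{Step 2: reduction to the modular law.} Since $X_1$ is a factor of $X_2$, we have $N(X_1)\subseteq N(X_2)$. The inclusion "$\supseteq$", meaning that the right-hand side is a factor of the left-hand side, is formal and holds in any lattice.
\begin{itemize}
\item $X_1$ is a factor of $X_1\vee X$ and of $X_2$.
\item $X\wedge X_2$ is a factor of $X$, hence of $X_1\vee X$, and it is a factor of $X_2$.
\end{itemize}
For "$\subseteq$", take a normal $L\in N((X_1\vee X)\wedge X_2)$. Then $L\in N(X_2)$ and $L\supseteq A\cap B$ with $A\in N(X_1)$ and $B\in N(X)$. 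I would replace $A$ by $A\cap L$, which is still in $N(X_1)$ since it is a filter, so that $A\subseteq L$. Now the modular law for normal subgroups in the form with products, $A\subseteq L\Rightarrow L\cap(AB)=A(L\cap B)$, is not quite the right shape, because our join corresponds to intersections. Eigenset inclusion is reversed relative to subgroup inclusion, so the lattice of eigenfilters is dual to the subgroup lattice. Dualizing gives: if $A\subseteq L$ then $L=L\cap(A\cdot(L\cap B))$-type identities. Concretely, I want to show that $L\supseteq A'\cap C$ with $A'\in N(X_1)$ and $C\in N(X)\cap N(X_2)$. The natural candidate is $C=LB$, which is normal and of finite index. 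It contains $B$, so $C\in N(X)$, and it contains $L$, so $C\in N(X_2)$. With $A'=A$ we get $A\cap LB = A\cap L B$, and by Dedekind's modular law, since $A\subseteq L$, this equals $(A\cap B)\cdot$... In fact it is easier to note directly that $A\cap LB\subseteq L$ would finish the argument, and this is where the modular law enters. With $A\subseteq L$, write $x\in A\cap LB$ as $x=lb$. Then $b=l^{-1}x\in L$, so $x\in L$. Actually this holds because $x\in A\subseteq L$ directly, so $A\cap LB\subseteq L$ trivially. Hence $L\in N(X_1\vee(X\wedge X_2))$.

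\textbf{Main obstacle.} The combinatorics above are routine, as shown. The real work is Step 1: justifying the explicit description of $\Eig(X\vee X')$ as the intersection-closure, and the reduction to normal subgroups. This amounts to identifying which subsets of finite index subgroups arise as $\Eig$ of an odometer, namely upward closed sets whose normal parts form a filter. I would take this characterization from Section~\ref{sec:eigensets} and Section~\ref{sec:odometers}. Conjugacy of the two sides then follows from the complete invariance of $\Eig$.
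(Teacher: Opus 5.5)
\textbf{Verdict: there is a genuine gap in the nontrivial inclusion.} The rest is fine. Your translation to eigensets, your description of $\Eig(X\vee X')$ as the upper hull of the intersections $\Gamma\cap\Gamma'$ (Proposition~\ref{pro:explicitSupremum} with Lemma~\ref{lem:formulaFilterHull}), the reduction to normal members, and the formal direction are all correct.

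\textbf{Where it fails.} You replace $A$ by $A\cap L$ and claim it stays in $N(X_1)$ ``since it is a filter''. But $L$ lies only in $N(X_2)$, not in $N(X_1)$, so the filter property of $N(X_1)$ does not apply. This step cannot be repaired. Once $A\in N(X_1)$ and $A\subseteq L$, upward closure already gives $L\in N(X_1)$. Your argument would therefore show that $(X_1\vee X)\wedge X_2$ is always a factor of $X_1$, which is false.

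Concretely, take $G=\mathbb{Z}^2$ and let $A,B,L$ be the three index-two subgroups containing $2\mathbb{Z}^2$. Set $X_1=G/A$, $X=G/B$ and $X_2=G/2\mathbb{Z}^2$. Then $A\cap B=2\mathbb{Z}^2\subseteq L$, so $L\in\Eig((X_1\vee X)\wedge X_2)$. However $\Eig(X_1)=\{A,G\}$ does not contain $A\cap L=2\mathbb{Z}^2$. Your fallback candidate fails as well: $LB=G$, so $A\cap LB=A\not\subseteq L$. That the last containment came out ``trivially'', with no use of the modular law, was the warning sign.

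\textbf{The fix: put $A\cap L$ into the second factor instead of the first.}
\begin{itemize}
\item Since $A\in N(X_1)\subseteq N(X_2)$ and $L\in N(X_2)$, we have $A\cap L\in N(X_2)$. This is where the hypothesis that $X_1$ is a factor of $X_2$ is needed in the hard direction.
\item Set $C:=B(A\cap L)$, which is normal of finite index. It contains $B$, so $C\in N(X)$. It contains $A\cap L$, so $C\in N(X_2)$.
\item Keep $A'=A$. Dedekind's law, using $A\cap L\subseteq A$, gives $A\cap B(A\cap L)=(A\cap B)(A\cap L)\subseteq LL=L$.
\end{itemize}
Elementwise: if $x=by\in A$ with $b\in B$ and $y\in A\cap L$, then $b=xy^{-1}\in A\cap B\subseteq L$, so $x\in L$. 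Hence $L\in N(X_1\vee(X\wedge X_2))$. This is exactly the paper's argument, where a normal $\Gamma_2\in E_2$ with $\Gamma_2\subseteq\Gamma_1\cap\Lambda$ plays the role of $A\cap L$.
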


Clearly the maximal common factor of a (non-empty) family of metrizable odometers is metrizable. 
Concerning the minimal common extension we show the following in Subsection \ref{subsec:eigensets_SupremaOfMetrizableOdometers}. 

\begin{theorem}
    \label{the:INTROmetrizabilitySupremum}
    For a countable non-empty family $\mathfrak{X}$ of metrizable odometers also the minimal common extension $\bigvee \mathfrak{X}$ is metrizable. 
\end{theorem}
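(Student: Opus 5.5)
We reduce the statement to a countability question about eigenvalues, using the results announced in the introduction. A subodometer is metrizable if and only if its set of eigenvalues is countable (Section \ref{sec:metrizableSubodometers}). So each $X\in\mathfrak{X}$ has countable $\Eig(X,G)$, and it suffices to show that $\Eig(\bigvee\mathfrak{X},G)$ is countable.

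The key step is to describe $\Eig(\bigvee\mathfrak{X},G)$ in terms of the $\Eig(X,G)$. We expect that the eigenvalue set of an odometer is closed under finite intersections (Section \ref{sec:odometers}), and is also upward closed among finite index subgroups, since $G/\Gamma\to G/\Gamma'$ is a factor map when $\Gamma\subseteq\Gamma'$. We also expect $\bigvee\mathfrak{X}$ to be the odometer whose eigenvalue set is the smallest such filter containing $\bigcup_{X\in\mathfrak{X}}\Eig(X,G)$. This is the construction of Section \ref{sec:eigensets}, and it follows from the factor criterion $\Eig(Y)\subseteq\Eig(X)$ together with uniqueness.

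Concretely, we aim to show
\[
\Eig\Bigl(\bigvee\mathfrak{X},G\Bigr)=\Bigl\{\Gamma\le G\text{ of finite index};\ \Gamma\supseteq \Gamma_1\cap\dots\cap\Gamma_n \text{ with } \Gamma_i\in\textstyle\bigcup_{X\in\mathfrak{X}}\Eig(X,G)\Bigr\}.
\]
The inclusion ``$\supseteq$'' holds because $\bigvee\mathfrak{X}$ extends each $X$, and its eigenvalue set is a filter. For ``$\subseteq$'', note that the right-hand side is itself a filter of finite index subgroups. We then invoke the realization result: every such filter, or more precisely every intersection-stable upward closed family, is the eigenvalue set of some odometer $Z$. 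Since $\Eig(X)\subseteq\Eig(Z)$ for all $X\in\mathfrak{X}$, the factor criterion shows that $Z$ extends every $X$. Hence $Z$ extends $\bigvee\mathfrak{X}$ by (b$_1$), which gives $\Eig(\bigvee\mathfrak{X})\subseteq\Eig(Z)$.

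Countability now follows. The union $U=\bigcup_{X}\Eig(X,G)$ is a countable union of countable sets, so it is countable, and the set of finite tuples from $U$ is countable. Each intersection $\Gamma_1\cap\dots\cap\Gamma_n$ has finite index, so only finitely many subgroups contain it, namely those corresponding to subgroups of a finite quotient. The right-hand side is therefore a countable union of finite sets, hence countable, and so $\bigvee\mathfrak{X}$ is metrizable.

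We expect the main obstacle to be the identification of $\Eig(\bigvee\mathfrak{X})$, and specifically the inclusion ``$\subseteq$''. This rests on the realization of arbitrary filters of finite index subgroups as eigenvalue sets of odometers, for example as the inverse limit of $G/N$ over the normal cores $N$ in the filter. If that result is not available in this exact form, we would instead build $\bigvee\mathfrak{X}$ directly as the orbit closure of a point in $\prod_{X\in\mathfrak{X}}X$. For countable $\mathfrak{X}$, a countable product of metrizable spaces is metrizable, and this gives the result immediately, provided one checks that the orbit closure is the minimal common extension.
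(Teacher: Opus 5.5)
Your main line is the paper's own argument: metrizability is equivalent to countability of the eigenset (Theorem~\ref{the:characterizationMetrizableSubodometers}). The eigenset $\Eig(\bigvee\mathfrak{X},G)$ is the filter hull of $U:=\bigcup_{X\in\mathfrak{X}}\Eig(X,G)$, which by Lemma~\ref{lem:formulaFilterHull} is your set $R$. Finally, $R$ is a countable union of the finite sets $\upperhull{\{\Gamma_1\cap\dots\cap\Gamma_n\}}$ (Lemma~\ref{lem:filterHullProperties}(ii)).

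\textbf{The gap.} For the inclusion $\Eig(\bigvee\mathfrak{X},G)\subseteq R$ you invoke a realization claim that is false as stated: not every filter is the eigenvalue set of an odometer. Eigenvalue sets are always conjugation invariant (Proposition~\ref{pro:conhullOfLocalEigenvalues}). Take a non-normal $\Gamma\in\subfin(G)$ and $g$ with $\Gamma^g\neq\Gamma$. The filter $\upperhull{\{\Gamma\}}$ omits $\Gamma^g$, since equal indices would force $\Gamma=\Gamma^g$ (Remark~\ref{rem:fundamentalDomainsComposition}). For the same reason, your inverse limit over the normal members of a filter $F$ has eigenvalue set $F$ only when $F$ is core-stable, equivalently conjugation invariant (Proposition~\ref{pro:coreStableNormalConIinvInterplay}(iii)); and indeed $\Gamma_G\notin\upperhull{\{\Gamma\}}$.

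\textbf{The fix.} You need one more line: your particular $R$ is conjugation invariant. $U$ is a union of conjugation invariant sets, and $\Lambda\mapsto\Lambda^g$ is an inclusion-preserving bijection of $\subfin(G)$ that commutes with finite intersections. Hence $R=\eigenhull{R}$, and the $R$-subodometer is an odometer $Z$ with $\Eig(Z,G)=R$ (Proposition~\ref{pro:scaleEigensetTransition}). With this added, your proof coincides with the paper's, where exactly this step is Lemma~\ref{lem:filterHullProperties}(iii) feeding Proposition~\ref{pro:explicitSupremum}.

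A minor point on counting: if $\Lambda=\Gamma_1\cap\dots\cap\Gamma_n$ is not normal, there is no quotient group $G/\Lambda$. Instead, the subgroups containing $\Lambda$ contain $\Lambda_G$ and so correspond to subgroups of the finite group $G/\Lambda_G$.

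\textbf{Your fallback.} This route is correct and genuinely different, and it needs less than you ask for. Any minimal subset $M$ of $\prod_{X\in\mathfrak{X}}X$ is a minimal action projecting onto every $X$. For odometers, the orbit closure of any point works, e.g.\ of $(e_X)_X$, which is the closure of the diagonal image of $G$. Such an $M$ is metrizable when $\mathfrak{X}$ is countable. By (b$_1$) of Theorem~\ref{the:INTROOdometerLattice}, $\bigvee\mathfrak{X}$ is a factor of $M$, and a Hausdorff continuous image of a compact metrizable space is metrizable. So you never need to identify $M$ with $\bigvee\mathfrak{X}$.

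If you do want that identification, and thereby existence of $\bigvee\mathfrak{X}$ without eigensets, take $M$ to be the orbit closure of $(e_X)_X$. Given a minimal common extension $Y$ with factor maps $\pi_X$ and a point $y_0\in Y$, the map $y\mapsto(\pi_X(y)\pi_X(y_0)^{-1})_X$ is equivariant, because right multiplication commutes with the rotation, and it maps $Y$ onto $M$. This base-point alignment is where the rotation hypothesis enters. The paper's eigenset route is longer for this particular statement, but it also yields the explicit eigenset $\filterhull{U}$ and generating scales of $\bigvee\mathfrak{X}$ (Propositions~\ref{pro:supremumInfimumScales} and~\ref{pro:supremumChain}).
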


As an application of the developed theory we present in Section \ref{sec:universalSubodometers} (for a given group $G$) the existence of a \emph{universal odometer} that has all subodometers as factors. 
Furthermore, we establish that any minimal action has a \emph{maximal subodometer factor} and a \emph{maximal odometer factor} and discuss how respective inverse limit representations can be derived.
We prove that a minimal action is a subodometer if and only if its Ellis semigroup is an odometer. 
Furthermore, we show that the \emph{enveloping odometer}, defined as the infimum of all odometer extensions, is conjugated to the Ellis semigroup of a subodometer $(X,G)$. 
Using the latter, we present how an inverse limit representation of the Ellis semigroup of a subodometer $(X,G)$ can be derived from an inverse limit representation of $(X,G)$, and that a subodometer is metrizable if and only if its Ellis semigroup is metrizable. 

\subsection*{Convention:} Below $G$ denotes a (discrete) group if nothing else is mentioned.

\section{Preliminaries}
\label{sec:prelims}

Whenever $R$ is a relation on a set $X$ and $x\in X$ we denote \[R[x]:=\{x'\in X;\, (x',x)\in R\} 
~~\text{ and }~~ R^{-1}:=\{(x',x);\, (x,x')\in R\}.\] 
For relations $R$ and $R'$ on $X$ we denote $RR'$ for the set of all $(x,x')\in X^2$ for which there exists $y\in X$ with $(x,y)\in R$ and $(y,x')\in R'$. Note that this operation is associative. 

\subsection{Lattices}
A \emph{partially ordered set} is a pair $(P,\leq)$, where $P$ is a set and $\leq$ is a binary relation on $P$ that is reflexive, antisymmetric, and transitive.
Let $(P,\leq)$ be a partially ordered set. 
$(P,\leq)$ is called a \emph{lattice} if for every pair in $P$ the supremum\footnote{
    For $A\subseteq P$ an element $x\in P$ is an \emph{upper bound} of $A$ if $a\leq x$ for all $a\in A$.
    A \emph{supremum} of $A$ is an upper bound $x$ such that
    $x\leq y$ for every upper bound $y$ of $A$. Note that a supremum is unique, if it exists. 
    Similarly, we define lower bounds and infima. 
} and the infimum exist. 
$(P,\leq)$ is called a \emph{complete lattice} if any subset allows for a supremum and an infimum. 
A subset $L\subseteq P$ is called a \emph{complete sublattice} of $P$ if for any $A\subseteq L$, both the supremum and the infimum exist in $(P,\leq)$ and are contained in $L$.

\subsection{Topology}
\label{subsec:prelims_topology}
A subset of a topological space is called \emph{clopen} if it is closed and open. A relation on a topological space $X$ is called \emph{closed/open/clopen} if it is closed/open/clopen as a subset of $X^2$ equipped with the product topology. 

\subsubsection{Uniformities of compact Hausdorff spaces}
Let $X$ be a compact Hausdorff space. 
We denote $\mathbb{U}_X$ for the \emph{uniformity of $X$}, i.e.\ the set of all neighborhoods $\epsilon$ of the diagonal $\Delta_X:=\{(x,x);\, x\in X\}$ within $X^2$. 
The elements of $\mathbb{U}_X$ are called \emph{entourages}. 
An entourage $\epsilon\in \mathbb{U}_X$ is called symmetric if $\epsilon=\epsilon^{-1}$.
For all $\epsilon\in \mathbb{U}_X$ there exists a symmetric $\delta\in \mathbb{U}_X$ such that $\delta\delta\subseteq \epsilon$. 
For reference and details on uniformities see \cite[Chapter 6]{kelley2017general}. 

For $\epsilon\in \mathbb{U}_X$ we denote $B_\epsilon(x):=\epsilon[x]$. 
A subset $\mathbb{B}\subseteq \mathbb{U}_X$ is called a \emph{base for $\mathbb{U}_X$} whenever for each $\epsilon\in \mathbb{U}_X$ there exists $\delta\in \mathbb{B}$ with $\delta\subseteq \epsilon$. 
Whenever $\mathbb{B}$ is a base for $\mathbb{U}_X$, then for $x\in X$ the set $\{B_\epsilon(x);\, \epsilon\in \mathbb{B}\}$ is a (topological) neighborhood base of $x$ \cite[Corollary 30]{kelley2017general}. This allows one to recover the topology of $X$ from $\mathbb{U}_X$.

\subsubsection{Stone spaces}
A non-empty topological space $X$ is called \emph{totally disconnected} if the only connected subsets are the singletons \cite[Page 101]{illanes1999hyperspaces}.
It is called \emph{zero-dimensional}, whenever there exists a base of clopen sets for the topology \cite[Page 64]{illanes1999hyperspaces}. 
A compact Hausdorff space is totally disconnected if and only if it is zero-dimensional \cite[Theorem 12.11]{illanes1999hyperspaces}.
A \emph{Stone} space is a totally disconnected compact Hausdorff space. 
Note that non-empty closed subsets of Stone spaces are Stone spaces.

\subsection{Groups}
\label{subsec:prelims_groups}
Let $G$ be a group. 
For a subset $M\subseteq G$ we denote $\grouphull{M}$ for the subgroup generated by $M$. For subgroups $\Gamma$ and $\Lambda$ of $G$ we denote $\Gamma\leq \Lambda$ whenever $\Gamma$ is a subgroup of $\Lambda$. 
For a group homomorphism $\phi\colon G\to H$ we denote $\ker(\phi):=\{g\in G;\, \phi(g)=e_H\}$ for its \emph{kernel}.

A subgroup $\Gamma\leq G$ is said to be of finite index, whenever the partition
$G/\Gamma:=\{g\Gamma;\, g\in G\}$ is finite.
We denote $[G\colon \Gamma]:=|G/\Gamma|$ for its \emph{index (in $G$)}. 
We denote $\subfin(G)$ for the set of all finite index subgroups of $G$.
Note that $\subfin(G)$ is \emph{upward closed}, i.e.\ that for $\Gamma\in \subfin(G)$ and $\Lambda\leq G$ with $\Gamma\leq \Lambda$ we have $\Lambda\in \subfin(G)$. 

For $\Gamma\leq G$ a subset $F\subseteq G$ is called a \emph{fundamental domain}, whenever each $g\in G$ has a unique representation as a product $g=h\gamma$ with $(h,\gamma)\in F\times \Gamma$. Note that $F$ can be chosen to contain the identity element $e_G$. 
Whenever $\Gamma\in \subfin(G)$ we have $|F|=[G\colon \Gamma]<\infty$. 

\begin{remark}
\label{rem:fundamentalDomainsComposition}
    Whenever $F$ is a fundamental domain of $\Lambda\leq G$ and $F'$ is a fundamental domain of $\Gamma\leq \Lambda$, then $F'F$ is a fundamental domain of $\Gamma\leq G$. 
    Thus, for $\Gamma,\Lambda\in \subfin(G)$ with $\Gamma\leq \Lambda$ we have $[G\colon \Lambda][\Lambda \colon \Gamma]=[G\colon \Gamma]$. 
    In particular, whenever $\Gamma$ and $\Lambda$ have the same index in $G$ we have $[\Lambda\colon \Gamma]=1$, i.e.\ $\Gamma=\Lambda$. 
\end{remark}

For $\Gamma\leq G$ and $g\in G$ we denote\footnote{
Note that our definition of $\Gamma^g$ differs from that in \cite{ribes2000profinite}, where $\Gamma^g$ denotes $g^{-1}\Gamma g$. Our convention is chosen so that it interacts more naturally with left actions. This will become evident in the subsequent discussion.
} 
$\Gamma^g:=g\Gamma g^{-1}$ for the \emph{conjugate of $\Gamma$ by $g$}.
Note that $\Gamma^g\leq G$ and that $(\Gamma^g)^h=\Gamma^{(hg)}$ for $g,h\in G$. 
Two subgroups $\Gamma$ and $\Lambda$ are called \emph{conjugated} if there exists $g\in G$ with $\Gamma=\Lambda^g$. 
A subgroup $\Gamma\leq G$ is called \emph{normal (in $G$)} if $\Gamma^g=\Gamma$ for all $g\in G$. 
The \emph{normal core} of $\Gamma\leq G$ is given by $\Gamma_G:=\bigcap_{g\in G} \Gamma^g$. 
It is the largest subgroup of $\Gamma$ that is normal in $G$.
For further details we recommend \cite{ribes2000profinite}. 

\begin{remark}
\label{rem:fundamentalDomainsAndConjugation}
    Let $\Gamma\leq G$ and $F$ be a fundamental domain of $\Gamma$. 
    For $g\in G$ there exists $h\in F$ with $g\Gamma=h\Gamma$ and we observe 
    $\Gamma^g=(g\Gamma)(g\Gamma)^{-1}=(h\Gamma)(h\Gamma)^{-1}=\Gamma^{h}.$
    Thus $\{\Gamma^g;\, g\in G\}$ is finite and we observe that the normal core is a finite index subgroup. 
\end{remark}

\subsection{Actions}
\label{subsec:prelims_actions}
Let $G$ be a group and $X$ be a compact Hausdorff space. 
An \emph{action} of $G$ on $X$ is a group homomorphism $\alpha$ from $G$ into the group of homeomorphisms $X\to X$.
For $g\in G$ and $x\in X$, we suppress the symbol for the action by simply writing $g.x:=\alpha(g)(x)$. 
This allows us to simply speak of an \emph{action $(X,G)$}. 
See \cite{auslander1988minimal} for reference to the following and more details on actions.  

Let $(X,G)$ be an action. 
For $x\in X$ we denote $G.x:=\{g.x;\, g\in G\}$ for the \emph{orbit} of $x$. An action $(X,G)$ is called \emph{minimal} if all $x\in X$ have a dense orbit. 
For $x\in X$ we denote $G_0(x):=\{g\in G;\, g.x=x\}$ for the \emph{stabilizer of $x$}. 
Note that $G_0(g.x)=G_0(x)^g$ holds for all $g\in G$.

If $(X_i,G)_{i\in I}$ is a family of actions the \emph{product} $\prod_{i\in I} X_i$ is also a compact Hausdorff space. 
On $\prod_{i\in I} X_i$ we consider the action given by $g.(x_i)_{i\in I}:=(g.x_i)_{i\in I}$. 
For a subset $M\subseteq X$ and $g\in G$ we denote $g.M:=\{g.x;\, x\in M\}$. 
A subset $M\subseteq X$ is called \emph{invariant} if $g.M=M$ holds for all $g\in G$. 
A subset of $X^2$ is called \emph{invariant} if it is invariant w.r.t.\ $(X^2,G)$. 

\subsubsection{Equivariant maps}
A continuous mapping $\pi\colon X\to Y$ between actions $(X,G)$ and $(Y,G)$ is called \emph{equivariant} if $\pi(g.x)=g.\pi(x)$ holds for all $g\in G$ and $x\in X$. 
An equivariant homeomorphism is called a \emph{conjugacy}. 
Two actions are called \emph{conjugated} if there exists a conjugacy between them. 
An equivariant surjection is called a \emph{factor map}. 
$(X,G)$ is called an \emph{extension} of $(Y,G)$ and $(Y,G)$ is called a \emph{factor} of $(X,G)$ if there exists a factor map $\pi\colon X\to Y$. 
Any factor map is \emph{closed}, i.e.\ images of closed sets are closed. 
Any equivariant map between minimal actions is a factor map. 

If $R\subseteq X$ is an invariant closed equivalence relation, then $g.R[x]:=R[g.x]$ induces an action on $X\big/R$ and the quotient mapping $x\mapsto R[x]$ is a factor map.  
Furthermore, for a factor map $\pi\colon X\to Y$ we denote $R(\pi):=\{(x,x')\in X^2;\, \pi(x)=\pi(x')\}$ for the \emph{fibre relation}. 
$R(\pi)$ is an invariant closed equivalence relation and $(Y,G)$ is conjugated to $(X\big/R(\pi),G)$. 

\subsubsection{Equicontinuity}
An action $(X,G)$ is called \emph{equicontinuous} if, for any $\epsilon\in \mathbb{U}_X$ there exists $\delta\in \mathbb{U}_X$ such that for all $(x,x')\in \delta$ and $g\in G$ we have $(g.x,g.x')\in \epsilon$. 
Note that $(X,G)$ is equicontinuous if and only if $\mathbb{U}_X$ allows for a base consisting of invariant entourages. 
Minimal equicontinuous actions $(X,G)$ are \emph{coalescent}, i.e.\ any factor map $\pi\colon X\to X$ is a conjugacy \cite[Page 81]{auslander1988minimal}. In particular, minimal equicontinuous actions $(X,G)$ and $(Y,G)$ are conjugated, if and only if there exist factor maps $X\to Y$ and $Y\to X$. 
A minimal equicontinuous action $(X,G)$ is called \emph{regular}\footnote{
    Note that for minimal equicontinuous actions $(X,G)$ also $(X^2,G)$ is equicontinuous. 
    Thus for minimal equicontinuous actions the notion of regularity from \cite[Page 147]{auslander1988minimal} can be equivalently formulated as we did it.}, 
whenever for $x,x'\in X$ there exists a conjugacy $\iota\colon X\to X$ with $\iota(x)=x'$.

\subsubsection{Rotations}
Let $G$ be a group and $X$ be a compact (Hausdorff) group.  
A \emph{(group) rotation} is an action $(X,G)$ of the form
$g.x = \phi(g) x$,
where $\phi \colon G \to X$ is a group homomorphism. 
Any rotation is equicontinuous. 
A rotation is minimal if and only if $\phi(G)$ is dense in $X$.
Note that for a minimal rotation $\phi$ is uniquely determined by $\phi(g)=g.e_X$, where $e_X$ denotes the identity of $X$. 
Any minimal rotation $(X,G)$ is regular, since for $x,x'\in X$ we have that $y\mapsto yx^{-1}x'$ yields a conjugation $\iota\colon X\to X$ with $\iota(x)= x'$.

\subsubsection{Maximal equicontinuous factors}
\label{subsubsec:prelims_actions_MaximalEquicontinuousFactor}
For any action $(X,G)$ there exists a \emph{maximal equicontinuous factor}, i.e.\ a factor map $\pi_{\operatorname{MEF}}\colon X\to X_{\operatorname{MEF}}$ onto an equicontinuous action $(X_{\operatorname{MEF}},G)$ such that for any factor map $\pi\colon X\to Y$ onto an equicontinuous action $(Y,G)$ there exists a factor map $\psi\colon X_{\operatorname{MEF}}\to Y$ with $\pi=\psi \circ \pi_{\operatorname{MEF}}$. The maximal equicontinuous factor is unique up to conjugacy. For details see \cite{auslander1988minimal}. 

\subsubsection{Finite subodometers}
\label{subsubsec:prelims_actions_FiniteSubodometers}
An action $(X,G)$ is called \emph{finite} if $X$ is finite. 
Note that the finite minimal actions are exactly the finite subodometers. 
We next summarize some well-known statements about finite subodometers. 
For finite actions $(X,G)$ we have $G_0(x)\in \subfin(G)$. 

Let $(X,G)$ and $(Y,G)$ be finite subodometers and $(x,y)\in X\times Y$. It is straightforward to show that there exists a factor map $\pi\colon X\to Y$ with $\pi(x)=y$ if and only if $G_0(x)\subseteq G_0(y)$. 
Furthermore, there exists a conjugacy $\iota\colon X\to Y$ with $\iota(x)=y$ if and only if $G_0(x) = G_0(y)$. 

For $\Gamma\in \subfin(G)$ we denote $G/\Gamma$ for the finite partition $\{h\Gamma;\, h\in G\}$ and consider the induced action $(G/\Gamma,G)$ given by $g.(h\Gamma):=gh\Gamma$ for $g\in G$ and $h\Gamma\in G/\Gamma$.
Clearly, we have $G_0(\Gamma)=\Gamma$.
Thus, whenever $(X,G)$ is a finite subodometer and $x\in X$, then $(X,G)$ and $(G/G_0(x),G)$ are conjugated and the conjugacy can be chosen such that $x\mapsto G_0(x)$. 

Note that for $\Gamma,\Lambda\in \subfin(G)$ and $g\in G$ there exists a factor map\footnote{
    The factor map is given by
    $h\Lambda \mapsto hg\Gamma=(hgh^{-1})h\Gamma$.     
    } 
$\pi\colon G/\Lambda\to G/\Gamma$ with $\pi(\Lambda)=g\Gamma$ if and only if $\Lambda\subseteq \Gamma^g$.
Furthermore, there exists a conjugacy $\iota\colon G/\Lambda\to G/\Gamma$ with $\iota(\Lambda)=g\Gamma$ if and only if $\Lambda= \Gamma^g$.
In particular, $\Gamma$ and $\Lambda$ are conjugated if and only if $G/\Gamma$ and $G/\Lambda$ are conjugated. 
Recall that any rotation is regular. 
Thus, $\Gamma\in \subfin(G)$ is normal if and only if $(G/\Gamma,G)$ is a rotation, i.e.\ a finite odometer.

\section{Entourages, equivalence relations and $G$-tiles}
\label{sec:entouragesEquivalenceRelationsAndGTiles}

\subsection{Uniformities of equicontinuous actions on Stone spaces}
\label{subsec:entouragesEquivalenceRelationsAndGTiles_uniformitiesOfEquicontinuousActionsOnStoneSpaces}
In order to characterize subodometers by properties of their respective uniformities we are interested in entourages that are equivalence relations. The following lemma summarizes the relevant properties of such entourages.

\begin{lemma}
\label{lem:characterizationClosedEquivalenceRelationEntourage}
    Let $X$ be a compact Hausdorff space and $\rho\in \mathbb{U}_X$ an equivalence relation. 
    \begin{itemize}
        \item[(i)] $\rho$ establishes a finite partition. 
        \item[(ii)] $B_\rho(x)$ is clopen for all $x\in X$. 
        \item[(iii)] $\rho$ is clopen (in $X^2$). 
    \end{itemize}
\end{lemma}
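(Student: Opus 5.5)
The approach is to show (ii) first, then deduce (i) from compactness, and finally (iii) from (i) and (ii).

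For (ii), fix $x\in X$. Since $\rho$ is a neighborhood of the diagonal, for every $y\in B_\rho(x)$ the pair $(y,y)$ is an interior point of $\rho$. Hence there is an open $U\ni y$ with $U\times U\subseteq\rho$, so $B_\rho(y)$ contains $U$. By transitivity and symmetry of $\rho$ we have $B_\rho(y)=B_\rho(x)$, so $U\subseteq B_\rho(x)$. Therefore $B_\rho(x)$ is open. Its complement is the union of the other equivalence classes, each of which is open by the same argument, so $B_\rho(x)$ is also closed.

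For (i), the equivalence classes $\{B_\rho(x);\, x\in X\}$ form a cover of the compact space $X$ by pairwise disjoint open sets. A finite subcover exists. Because the classes are disjoint, every class must already appear in that subcover, so the partition is finite.

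For (iii), write $\rho=\bigcup_{C\in X/\rho} C\times C$. By (i) and (ii) this is a finite union of clopen rectangles in $X^2$, hence clopen.

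I do not expect a genuine obstacle. The only point needing care is in (ii): one must use that $\rho$ is a neighborhood of each diagonal point $(y,y)$, not merely of $(x,x)$, to obtain openness of the whole class.
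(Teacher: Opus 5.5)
Your proof is correct and is essentially the paper's argument: openness of each class because $\rho$ is a neighborhood of every diagonal point, compactness for finiteness, and a finite union of clopen squares for (iii). The only difference is that you prove closedness of $B_\rho(x)$ before (i). This works, since the complement is a union of open classes and is therefore open without needing finiteness.
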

\begin{proof}
    We first show that $B_\rho(x)$ is open for all $x\in X$. 
    Consider $x'\in B_\rho(x)$. 
    From $\rho\in \mathbb{U}_X$ we know that $B_\rho(x')$ is a neighborhood of $x'$. Furthermore, $\rho$ is an equivalence relation and hence $B_\rho(x')=B_\rho(x)$. 
    This shows $B_\rho(x)$ to be open. 

(i): 
    Since $\{B_\rho(x);\, x\in X\}$ is an open cover of $X$ and $X$ is compact there exists a finite $F\subseteq X$ with $X=\bigcup_{x\in F}B_\rho(x)=\bigcup_{x\in F}\rho[x]$. 
    In particular, the finite set $\{\rho[x];\, x\in F\}$ is the partition induced by $\rho$. 

(ii): 
    For $x\in X$ we already know that $B_\rho(x)$ is open. 
    Furthermore, it is the finite union of the other open equivalence classes and hence closed. 

(iii): 
    By (i) and (ii), $\rho=\bigcup_{x\in X} \rho[x]^2$ is clopen as a finite union of clopen subsets of $X^2$.
\end{proof}

The following characterizes subodometers via properties of the respective uniformities.  

\begin{proposition}
\label{pro:characterizationEquicontinuousStone}
    Let $(X,G)$ be an action. 
    \begin{itemize}
        \item[(i)] $X$ is a Stone space if and only if $\mathbb{U}_X$ allows for a base consisting of equivalence relations. 
        \item[(ii)] $(X,G)$ is an equicontinuous action on a Stone space if and only if $\mathbb{U}_X$ allows for a base consisting of invariant equivalence relations. 
    \end{itemize}
\end{proposition}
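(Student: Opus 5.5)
For (i), I will prove the forward direction by building equivalence-relation entourages from finite clopen partitions. Let $X$ be Stone and $\epsilon\in\mathbb{U}_X$. For each $x\in X$ the set $\epsilon[x]$ is a neighborhood of $x$. Since $X$ is zero-dimensional, I choose a clopen $C_x$ with $x\in C_x$ and $C_x^2\subseteq\epsilon$. To get such a $C_x$, first pick an open $U_x\ni x$ with $U_x\times U_x\subseteq\epsilon$; this is possible because $\epsilon$ is a neighborhood of $(x,x)$ in $X^2$. Then take a clopen $C_x\subseteq U_x$ containing $x$. By compactness, finitely many $C_{x_1},\dots,C_{x_n}$ cover $X$. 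Their common refinement, consisting of the nonempty sets $C_{x_i}\setminus\bigcup_{j<i}C_{x_j}$, is a finite partition $\mathcal P$ of $X$ into clopen sets, each contained in some $C_{x_i}$. Set $\rho:=\bigcup_{P\in\mathcal P}P^2$. This is an equivalence relation, it is open and contains $\Delta_X$ (so $\rho\in\mathbb{U}_X$), and $\rho\subseteq\bigcup_i C_{x_i}^2\subseteq\epsilon$.

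For the converse of (i), suppose $\mathbb{U}_X$ has a base of equivalence relations. Then for each $x$ the sets $B_\rho(x)=\rho[x]$ with $\rho$ in the base form a neighborhood base of $x$, by the fact quoted from Kelley. By Lemma \ref{lem:characterizationClosedEquivalenceRelationEntourage}(ii) these sets are clopen. Hence $X$ is zero-dimensional, and therefore Stone.

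For (ii), the backward direction is immediate. A base of invariant entourages gives equicontinuity, as noted in the preliminaries, and a base of equivalence relations gives that $X$ is Stone by (i).

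The forward direction of (ii) is the main step, since the relation $\rho$ from (i) need not be invariant. Let $\epsilon\in\mathbb{U}_X$. By equicontinuity there is an invariant entourage $\delta\subseteq\epsilon$. By (i) there is an equivalence relation $\rho\in\mathbb{U}_X$ with $\rho\subseteq\delta$. I then set
\[\tilde\rho:=\bigcap_{g\in G} g.\rho=\{(x,x');\,(g.x,g.x')\in\rho \text{ for all } g\in G\}.\]
This is an invariant equivalence relation, and it lies inside $\rho\subseteq\epsilon$. The only thing to check is that $\tilde\rho$ is still an entourage; a priori an infinite intersection might lose this.

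To handle this, I use equicontinuity again: choose an invariant entourage $\eta\subseteq\rho$. Then for $(x,x')\in\eta$ and every $g\in G$ we have $(g.x,g.x')\in\eta\subseteq\rho$. This gives $\eta\subseteq\tilde\rho$, so $\tilde\rho$ contains a neighborhood of the diagonal and is an entourage. With this, the detour through $\delta$ becomes unnecessary: one simply takes $\rho\subseteq\epsilon$ from (i) and forms $\tilde\rho$ directly.
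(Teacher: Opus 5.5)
Your proof is correct and essentially the paper's. In (i) you build the same relation $\rho=\bigcup_{P\in\mathcal P}P^2$ from a disjointified finite clopen cover; the only difference is that you get $C_x^2\subseteq\epsilon$ from a product neighbourhood of $(x,x)$ rather than from a symmetric $\delta$ with $\delta\delta\subseteq\epsilon$, and your backward direction via Lemma~\ref{lem:characterizationClosedEquivalenceRelationEntourage}(ii) matches the paper's. In (ii), your streamlined version is exactly the paper's argument: take $\rho\subseteq\epsilon$ from (i), form $\bigcap_{g\in G}g.\rho$, and use an invariant entourage inside $\rho$ to show this intersection is still an entourage.
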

\begin{proof}
(i): 
    Assume first that $\mathbb{U}_X$ allows for a base of equivalence relations. 
    Consider $x\in X$ and a neighborhood $U$ of $x$. 
    There exists $\epsilon\in \mathbb{U}_X$ such that $B_\epsilon(x)\subseteq U$. 
    By our assumption, we find an equivalence relation $\rho\in \mathbb{U}_X$ with $\rho\subseteq \epsilon$. 
    Clearly, we have $B_\rho(x)\subseteq B_\epsilon(x)\subseteq U$. 
    Furthermore, it follows from Lemma \ref{lem:characterizationClosedEquivalenceRelationEntourage} that $B_\rho(x)$ is clopen. 
    This shows that $X$ is zero-dimensional, i.e.\ a Stone space. 

    For the converse assume that $X$ is a Stone space and consider $\epsilon\in \mathbb{U}_X$. 
    Let $\delta\in \mathbb{U}_X$ be symmetric and such that $\delta\delta\subseteq \epsilon$. 
    Since $X$ is a Stone space for $x\in X$ there exists $A_x$ clopen with $x\in A_x\subseteq B_\delta(x)$. 
    We have $X=\bigcup_{x\in X}A_x$ and the compactness of $X$ yields that $X=\bigcup_{x\in F}A_x$ for some finite subset $F\subseteq X$. 
    Enumerating $F=\{x_1,x_2,\dots, x_n\}$ and denoting $A_1:=A_{x_1}$ and 
    $A_k:=A_{x_k}\setminus \bigcup_{i=1}^{k-1} A_i$ for $k\in \{2,\dots ,n\}$ we obtain a partition $\{A_1,\dots, A_n\}$ of $X$ that consists of clopen sets. 
    Thus $\rho:=\bigcup_{k=1}^n A_k^2$ is a clopen equivalence relation.
    In particular, we have $\rho\in \mathbb{U}_X$. 
    For $(y,y')\in \rho$ there exists $k\in \{1,\dots, n\}$ such that $(y,y')\in A_k^2\subseteq A_{x_k}^2\subseteq B_\delta(x_k)^2\subseteq \delta\delta\subseteq \epsilon$.
    This shows $\rho \subseteq \epsilon$. 

(ii): 
    Recall that an action is equicontinuous if and only if $\mathbb{U}_X$ allows for a base consisting of invariant entourages. 
    Thus, from (i) we observe that whenever $\mathbb{U}_X$ allows for a base consisting of invariant equivalence relations, then $(X,G)$ is equicontinuous and $X$ is a Stone space. 

    For the converse assume that $(X,G)$ is an equicontinuous action on a Stone space and consider $\epsilon\in \mathbb{U}_X$. 
    By (i) there exists an equivalence relation $\rho'\in \mathbb{U}_X$ with $\rho'\subseteq \epsilon$. Denote $\rho:=\bigcap_{g\in G}g.\rho'$ and note that $\rho$ is an invariant equivalence relation with $\rho\subseteq\rho'\subseteq \epsilon$.
    Since $(X,G)$ is equicontinuous there exists an invariant $\delta\in \mathbb{U}_X$ with $\delta\subseteq\rho'$ and it follows from 
    $\delta=\bigcap_{g\in G}g.\delta\subseteq \rho$ that $\rho\in \mathbb{U}_X$. 
\end{proof}

\subsection{$G$-tiles}
\label{subsec:entouragesEquivalenceRelationsAndGTiles_GTiles}
The following notion will be an essential tool in the study of subodometers. 

\begin{definition}
    Let $(X,G)$ be an action. 
    An open subset $A\subseteq X$ is called a \emph{$G$-tile} if $\{g.A;\, g\in G\}$ is a partition of $X$. We refer to this partition as the \emph{partition induced by $A$} and to the respective equivalence relation as the \emph{equivalence relation induced by $A$}. 
\end{definition}

\begin{remark}
    Since $X$ is compact, the induced partition of a $G$-tile is finite and hence any $G$-tile is clopen.     
\end{remark}

For minimal actions $G$-tiles and invariant equivalence relations that are entourages are closely related. 

\begin{lemma}
\label{lem:GtilesVSInvariantEquivalenceRelationEntourages}
    Let $(X,G)$ be a minimal action. 
    \begin{itemize}
        \item[(i)] The equivalence relation $\rho$ induced by a $G$-tile $A$ is an invariant clopen entourage and satisfies $B_\rho(x)=A$ for all $x\in A$.
        \item[(ii)] If $\rho\in \mathbb{U}_X$ is an invariant equivalence relation, then $B_\rho(x)$ is a $G$-tile for any $x\in X$. 
    \end{itemize}
\end{lemma}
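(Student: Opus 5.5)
For (i) I will write out the relation explicitly as
\[\rho=\bigcup_{g\in G}(g.A)^2 .\]
Since the sets $g.A$ partition $X$, this is an equivalence relation whose classes are exactly the tiles $g.A$. In particular $B_\rho(x)=A$ for every $x\in A$.

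Invariance follows because $h.\rho=\bigcup_{g}(hg.A)^2=\rho$, as $g\mapsto hg$ is a bijection of $G$.

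Next, each $g.A$ is open (image of an open set under a homeomorphism), and by the Remark the partition is finite. So $\rho$ is a finite union of sets $(g.A)^2$, each of which is clopen in $X^2$ because every $g.A$ is clopen; hence $\rho$ is clopen. It contains $\Delta_X$, and being open it is a neighborhood of the diagonal, so $\rho\in\mathbb{U}_X$. Minimality is not even needed for this part.

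For (ii), fix $x\in X$ and put $A:=B_\rho(x)=\rho[x]$. By Lemma \ref{lem:characterizationClosedEquivalenceRelationEntourage}, $A$ is clopen, in particular open, and the partition induced by $\rho$ is finite. Invariance of $\rho$ gives $g.\rho[x]=\rho[g.x]$. Hence each $g.A$ is a $\rho$-class, so the sets $g.A$ are pairwise either equal or disjoint.

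It remains to see that they cover $X$, and this is where minimality enters. Let $y\in X$. The class $\rho[y]$ is an open neighborhood of $y$, but it is more convenient to use the openness of $A$ itself. By minimality the orbit $G.y$ is dense, so it meets the non-empty open set $A$: there is $g\in G$ with $g.y\in A$. Then $y\in g^{-1}.A$. So $\{g.A;\, g\in G\}$ is a partition of $X$ into open sets, and $A$ is a $G$-tile.

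The only mildly delicate point is this covering step. All the other steps are bookkeeping with invariance and with Lemma \ref{lem:characterizationClosedEquivalenceRelationEntourage}.
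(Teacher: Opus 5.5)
Your proof is correct and follows the paper's argument essentially step for step. In (i) you write $\rho$ as the finite union of the clopen squares $(g.A)^2$ and check invariance by reindexing; in (ii) you use invariance to see that the translates of $B_\rho(x)$ are $\rho$-classes and use minimality for the covering. The only cosmetic difference is that you let the orbit of $y$ hit the open set $B_\rho(x)$, whereas the paper lets the orbit of $x$ hit $B_\rho(y)$ and then uses symmetry and invariance; both work equally well.
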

\begin{proof}
(i): 
    From $A\in \{g.A;\, g\in G\}$ we observe that $B_\rho(x)=\rho[x]=A$ holds for all $x\in A$. 
    Choose $F\subseteq X$ finite with $\{g.A;\, g\in F\}=\{g.A;\, g\in G\}$ and note that $\rho=\bigcup_{g\in F}g.A^2$ is the finite union of clopen sets and hence clopen. 
    Since $\rho$ is reflexive we observe $\rho$ to be an entourage. 
    For $g\in G$ and $(x,x')\in \rho$ there exists $h\in G$ with $(x,x')\in h.A^2$. 
    It follows that $(g.x,g.x')\in (gh).A^2\subseteq \rho$, which shows $\rho$ to be invariant. 
    
(ii):
    Let $x\in X$. 
    It follows from Lemma \ref{lem:characterizationClosedEquivalenceRelationEntourage} that $B_\rho(x')$ is open for all $x'\in X$. 
    Thus, the minimality of $(X,G)$ yields that for $x'\in X$ there exists $g\in G$ with $g.x\in B_\rho(x')$. Since $\rho$ is an invariant equivalence relation we observe $x'\in B_\rho(g.x)=g.B_\rho(x)$. 
    This shows $\bigcup_{g\in G} g.B_\rho(x)=X$ and establishes $\{g.B_\rho(x);\, g\in G\}=\{\rho[g.x];\, g\in G\}$ as the partition given by the equivalence relation $\rho$. 
\end{proof}

\subsection{Characterization of subodometers via $G$-tiles}
\label{subsec:entouragesEquivalenceRelationsAndGTiles_CharacterizationOfSubodometersViaGTiles}

\begin{theorem}
\label{the:characterizationGTiles}
    A minimal action $(X,G)$ is a subodometer if and only if the topology of $X$ allows for a base of $G$-tiles. 
\end{theorem}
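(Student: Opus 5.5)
We prove both directions by passing between $G$-tiles and invariant equivalence relations that are entourages, using Lemma \ref{lem:GtilesVSInvariantEquivalenceRelationEntourages} as the dictionary and Proposition \ref{pro:characterizationEquicontinuousStone}(ii) as the characterization of subodometers.

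For the forward direction, assume $(X,G)$ is a subodometer, and let $x\in X$ with an open neighborhood $U$. We first choose $\epsilon\in\mathbb{U}_X$ with $B_\epsilon(x)\subseteq U$. By Proposition \ref{pro:characterizationEquicontinuousStone}(ii) there is an invariant equivalence relation $\rho\in\mathbb{U}_X$ with $\rho\subseteq\epsilon$. By Lemma \ref{lem:GtilesVSInvariantEquivalenceRelationEntourages}(ii), $B_\rho(x)$ is a $G$-tile. It contains $x$ and satisfies $B_\rho(x)\subseteq B_\epsilon(x)\subseteq U$. Hence the $G$-tiles form a base.

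For the converse, assume the $G$-tiles form a base of the topology. Since $G$-tiles are clopen, $X$ is zero-dimensional and therefore Stone. It remains to show equicontinuity, and by Proposition \ref{pro:characterizationEquicontinuousStone}(ii) it suffices to find, for each $\epsilon\in\mathbb{U}_X$, an invariant equivalence relation $\rho\in\mathbb{U}_X$ with $\rho\subseteq\epsilon$. Fix $\epsilon$ and a symmetric $\delta\in\mathbb{U}_X$ with $\delta\delta\subseteq\epsilon$.

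The obstacle is that a single tile inside $B_\delta(x)$ controls only the class containing $x$, while its translates $g.A$ need not be small. We handle this with finitely many tiles. For each $x$, pick a $G$-tile $A_x$ with $x\in A_x\subseteq B_\delta(x)$. By compactness, finitely many of these cover $X$, say $A_1,\dots,A_n$ with $A_k\subseteq B_\delta(x_k)$. Let $\rho_k$ be the equivalence relation induced by $A_k$; by Lemma \ref{lem:GtilesVSInvariantEquivalenceRelationEntourages}(i) it is an invariant clopen entourage. Set $\rho:=\bigcap_{k=1}^n\rho_k$. As a finite intersection of invariant equivalence relations that are entourages, $\rho$ is again an invariant equivalence relation in $\mathbb{U}_X$.

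To check $\rho\subseteq\epsilon$, take $(y,y')\in\rho$. Choose $k$ with $y\in A_k$. Then $y'\in\rho_k[y]=A_k$, using $B_{\rho_k}(y)=A_k$ from Lemma \ref{lem:GtilesVSInvariantEquivalenceRelationEntourages}(i). So $(y,y')\in A_k^2\subseteq B_\delta(x_k)^2\subseteq\delta\delta\subseteq\epsilon$, where the last inclusions use that $\delta$ is symmetric. This gives the required base of invariant equivalence relations, and Proposition \ref{pro:characterizationEquicontinuousStone}(ii) shows that $(X,G)$ is equicontinuous on a Stone space. Since $(X,G)$ is minimal, it is a subodometer.
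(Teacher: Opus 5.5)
Your proof is correct and follows the paper's argument essentially step for step: the forward direction uses Proposition \ref{pro:characterizationEquicontinuousStone}(ii) together with Lemma \ref{lem:GtilesVSInvariantEquivalenceRelationEntourages}(ii). The converse covers $X$ by finitely many $G$-tiles lying inside $\delta$-balls, intersects the induced invariant clopen relations, and uses the estimate $A_k^2\subseteq B_\delta(x_k)^2\subseteq\delta\delta\subseteq\epsilon$, exactly as the paper does. Your separate remark that $X$ is Stone because tiles are clopen is redundant, since Proposition \ref{pro:characterizationEquicontinuousStone}(ii) already gives this, but it does no harm.
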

\begin{proof}
    If $(X,G)$ is a subodometer, then we know from Lemma \ref{lem:characterizationClosedEquivalenceRelationEntourage} and Proposition \ref{pro:characterizationEquicontinuousStone} that $\mathbb{U}_X$ allows for a base $\mathbb{B}$ of invariant closed equivalence relations. 
    From Lemma \ref{lem:GtilesVSInvariantEquivalenceRelationEntourages} we observe that $\{B_\epsilon(x);\, (x,\epsilon)\in X\times \mathbb{B}\}$ is a base for the topology of $X$ that consists of $G$-tiles. 

    For the converse assume that $X$ allows for a base of $G$-tiles for its topology. By Proposition \ref{pro:characterizationEquicontinuousStone} it suffices to show that $\mathbb{U}_X$ allows for a base of invariant equivalence relations. 
    Let $\epsilon\in \mathbb{U}_X$. 
    Consider a symmetric entourage $\delta\in \mathbb{U}_X$ with $\delta\delta\subseteq \epsilon$. 
    For $x\in X$ choose a $G$-tile with $x\in A_x\subseteq B_\delta(x)$ and recall that $G$-tiles are open.  
    Since $X$ is compact we observe that there exists a finite subset $F\subseteq X$ such that $\bigcup_{x\in F}A_x=X$. 
    For $x\in F$ denote $\rho_x$ for the equivalence relation induced by $A_x$. From Lemma \ref{lem:GtilesVSInvariantEquivalenceRelationEntourages} we know that $\rho_x\in \mathbb{U}_X$ is a clopen invariant equivalence relation.
    Since $F$ is finite also $\rho:=\bigcap_{x\in F} \rho_x$ is a clopen invariant equivalence relation. In particular, we have $\rho \in \mathbb{U}_X$. 
    To show that $\rho\subseteq \epsilon$ consider $(x_1,x_2)\in \rho$.  
    There exists $x\in F$ such that $x_1\in A_x=\rho_x[x]$. 
    From $(x_1,x_2)\in \rho\subseteq \rho_x$ we observe that 
    $x_2\in \rho_x[x_1]=\rho_x[x]=A_x$ and hence 
    $(x_1,x_2)\in A_x^2\subseteq B_\delta(x)^2\subseteq \delta \delta \subseteq \epsilon$.
\end{proof}

\section{Subodometers generated by scales}
\label{sec:subodometersGeneratedByScales}
We next show that any subodometer is conjugated to an inverse limit of finite minimal actions. For this we will need the following notions. 
Recall that $\subfin(G)$ denotes the set of all finite index subgroups $\Gamma\leq G$. 

\begin{definition}
    A subset $S\subseteq \subfin(G)$ is called 
    \begin{itemize}
        \item \emph{a scale} if $S$ is non-empty and for $\Gamma,\Gamma'\in S$ there exists $\Lambda\in S$ with $\Lambda\subseteq \Gamma\cap \Gamma'$. 
        \item \emph{upward closed} if for all $\Gamma\in S$ and $\Lambda\in \subfin(G)$ with $\Gamma\subseteq \Lambda$ we have $\Lambda\in S$. 
        \item \emph{a filter} if it is an upward closed scale. 
    \end{itemize}
\end{definition}

\begin{remark}
    Our definition of scale differs from the definition of scale considered in \cite{cortez2008Godometers}, where scales are given by sequences $(\Gamma_n)_{n\in \mathbb{N}}$ in $\subfin(G)$ with $\Gamma_{n+1}\subseteq \Gamma_n$. 
    We will see in Section \ref{sec:metrizableSubodometers} below that these sequences can be used to characterize metrizable subodometers. 
\end{remark}

\begin{remark}
    A subset $S\subseteq \subfin(G)$ is called \emph{intersection closed} (also \emph{intersection stable}) if for all $\Gamma,\Gamma'\in S$ we have $\Gamma\cap \Gamma'\in S$. 
    Any intersection closed subset of $\subfin(G)$ is a scale. Furthermore, any filter is intersection closed. 
\end{remark}

Let $S\subseteq \subfin(G)$ be a scale.
Note that for $\Gamma_1,\Gamma_2\in S$ with $\Gamma_1\leq \Gamma_2$ there exists a unique factor map $\pi_{\Gamma_2}^{\Gamma_1}\colon G/\Gamma_1 \to G/\Gamma_2$ with $\Gamma_1\mapsto \Gamma_2$. 
For $\Gamma_1,\Gamma_2,\Gamma_3\in \subfin(G)$ with $\Gamma_1\leq \Gamma_2\leq \Gamma_3$ we have $\pi_{\Gamma_3}^{\Gamma_1}=\pi_{\Gamma_3}^{\Gamma_2}\circ \pi_{\Gamma_2}^{\Gamma_1}$. 
Thus, any scale $S$ induces an inverse system $(\pi_{\Lambda}^{\Gamma})_{(\Gamma,\Lambda)\in S_*^2}$ of factor maps, where we abbreviate 
$S_*^2:=\{(\Gamma,\Lambda)\in S^2;\, \Gamma\leq \Lambda\}$.
See \cite[Section 1.1]{ribes2000profinite} for further details on inverse systems and inverse limits. 

\begin{definition}
    Let $S\subseteq \subfin(G)$ be a scale. 
    The \emph{$S$-subdometer} is given by the inverse limit 
    \[
    \varprojlim_{\Gamma \in S} G\big/\Gamma 
    := 
    \Bigl\{ (x_\Gamma)_{\Gamma \in S} \in \prod_{\Gamma \in S} G\big/\Gamma 
    ;\, 
    \pi_{\Lambda}^\Gamma(x_\Gamma) = x_{\Lambda} \text{ for all } (\Gamma,\Lambda)\in S_*^2
    \Bigr\}.
    \] 
    For a scale $S$ we say that a subodometer $(X,G)$ is \emph{generated by $S$}, whenever it is conjugated to the $S$-subodometer.     
\end{definition}

\begin{remark}
    Note that the $S$-subodometer is the minimal component of the equicontinuous action of $G$ on $\prod_{\Gamma \in S} G/\Gamma$ that contains $(\Gamma)_{\Gamma\in S}$. 
    Since $\prod_{\Gamma \in S} G/\Gamma$ is a Stone space we observe that $(\varprojlim_{\Gamma \in S} G/\Gamma,G)$ is a subodometer, justifying the terminology.     
\end{remark}

We will next establish that any subodometer is generated by some scale. 
For this we will need the following. 

\begin{definition}
    Let $(X,G)$ be a minimal action. 
    For $x\in X$ we denote $\Eig_x(X,G)$ for the set of all $\Gamma\in \subfin(G)$ for which there exists a factor map $\pi\colon X\to G/\Gamma$ with $\pi(x)=\Gamma$. 
\end{definition}

\begin{theorem}
\label{the:scalesForSubodometersViaEigx}
    Let $(X,G)$ be a minimal action and $x\in X$. 
    \begin{itemize}
        \item[(i)] $\Eig_x(X,G)$ is a filter. 
        \item[(ii)] If $(X,G)$ is a subodometer then
        $(X,G)$ is generated by $\Eig_x(X,G)$. 
    \end{itemize}
\end{theorem}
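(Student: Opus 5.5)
For (i), I will first note that $\Eig_x(X,G)$ is non-empty, since the constant map $X\to G/G$ is a factor map sending $x\mapsto G$. For upward closedness, let $\Gamma\in\Eig_x(X,G)$ via $\pi$ and $\Gamma\subseteq\Lambda\in\subfin(G)$. Composing $\pi$ with $\pi^{\Gamma}_{\Lambda}\colon G/\Gamma\to G/\Lambda$ gives a factor map $X\to G/\Lambda$ with $x\mapsto\Lambda$.

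It then suffices to show that $\Eig_x(X,G)$ is intersection closed, since upward closed plus intersection closed gives a filter. Let $\pi_1\colon X\to G/\Gamma_1$ and $\pi_2\colon X\to G/\Gamma_2$ be factor maps with $x\mapsto\Gamma_i$, and consider $\psi=(\pi_1,\pi_2)\colon X\to G/\Gamma_1\times G/\Gamma_2$. Its image $Y:=\psi(X)$ is a finite minimal action, being an equivariant image of a minimal action. Moreover $G_0(\psi(x))=G_0(\Gamma_1)\cap G_0(\Gamma_2)=\Gamma_1\cap\Gamma_2$. By Subsection \ref{subsubsec:prelims_actions_FiniteSubodometers} there is a conjugacy $Y\to G/(\Gamma_1\cap\Gamma_2)$ with $\psi(x)\mapsto\Gamma_1\cap\Gamma_2$. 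Composing yields the required factor map.

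For (ii), write $S:=\Eig_x(X,G)$. For each $\Gamma\in S$ the factor map $\pi_\Gamma\colon X\to G/\Gamma$ with $\pi_\Gamma(x)=\Gamma$ is unique: two such equivariant maps agree on the dense orbit $G.x$, and the target is Hausdorff. Uniqueness gives compatibility $\pi^{\Gamma}_{\Lambda}\circ\pi_\Gamma=\pi_\Lambda$ for $\Gamma\leq\Lambda$ in $S$, because both sides are factor maps sending $x\mapsto\Lambda$. Hence $\Phi(y):=(\pi_\Gamma(y))_{\Gamma\in S}$ defines a continuous equivariant map $X\to\varprojlim_{\Gamma\in S}G/\Gamma$. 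Since $X$ is minimal and $\Phi(X)$ is compact, invariant and non-empty, $\Phi(X)$ contains the orbit closure of $(\Gamma)_{\Gamma\in S}$. By the remark following the definition of the $S$-subodometer, this orbit closure is the whole inverse limit (as a minimal component), so $\Phi$ is surjective.

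The main step is injectivity of $\Phi$. Let $y\neq y'$. I would use Theorem \ref{the:characterizationGTiles}, or equivalently the base of invariant clopen equivalence relations from Proposition \ref{pro:characterizationEquicontinuousStone}, to choose an invariant equivalence relation $\rho\in\mathbb{U}_X$ with $(y,y')\notin\rho$. By Lemma \ref{lem:characterizationClosedEquivalenceRelationEntourage}, $\rho$ is clopen with finitely many classes, so the quotient map $q\colon X\to X/\rho$ is a factor map onto a finite minimal action. Set $\Gamma:=G_0(q(x))\in\subfin(G)$. The finite-action facts then give a conjugacy $X/\rho\to G/\Gamma$ with $q(x)\mapsto\Gamma$. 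Composing shows $\Gamma\in S$ and, by uniqueness, that $\pi_\Gamma$ equals this composition. Since $q(y)\neq q(y')$, we get $\pi_\Gamma(y)\neq\pi_\Gamma(y')$, so $\Phi$ is injective.

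Thus $\Phi$ is a continuous equivariant bijection from a compact space onto a Hausdorff space, hence a conjugacy. This shows $(X,G)$ is generated by $\Eig_x(X,G)$.
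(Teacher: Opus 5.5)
Your proof is correct, and it reaches the result by a different route than the paper.

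\textbf{Part (i).} The paper first identifies $\Eig_x(X,G)$ with the hitting-time sets $G(x,B)$ of $G$-tiles $B\ni x$ (Lemma \ref{lem:EigxDescriptionGTiles}). It then intersects the induced equivalence relations (Lemma \ref{lem:GTileNeighbourhoodsIntersectionClosed}). Your product map $(\pi_1,\pi_2)$ is the same idea in disguise, since its fibre relation is $R(\pi_1)\cap R(\pi_2)$. However, you read off $\Gamma_1\cap\Gamma_2$ directly as a stabilizer in the finite image, which bypasses the $G$-tile machinery.

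\textbf{Part (ii).} The paper uses $\hat{\pi}$ only as a factor map $X\to X'$. It obtains the reverse factor map from the hitting-time criterion of Lemma \ref{lem:existenceFactorMaps}, which extends a uniformly continuous map off the dense orbit, together with the $G$-tile base of Theorem \ref{the:characterizationGTiles}. It then concludes by coalescence of minimal equicontinuous actions. You instead prove that $\Phi$ is injective by separating points with the finite factors $X/\rho$, and then compactness finishes the argument.

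\textbf{What each approach buys.} Your route avoids both Lemma \ref{lem:existenceFactorMaps} and the externally cited coalescence property. It also makes explicit two things the paper uses tacitly: the uniqueness of $\pi_\Gamma$ and the compatibility $\pi^{\Gamma}_{\Lambda}\circ\pi_\Gamma=\pi_\Lambda$. The paper even asserts that $\hat{\pi}$ is an embedding without proving injectivity, and your argument is exactly what justifies that claim. The paper's route, in turn, produces tools it reuses later. The $G$-tile description of $\Eig_x(X,G)$ appears again in Lemma \ref{lem:characterizationFiniteSubodometers} and Theorem \ref{the:characterizationMetrizableSubodometers}. Lemma \ref{lem:existenceFactorMaps} is a general criterion for factor maps between minimal equicontinuous actions.

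\textbf{Two small remarks.} First, minimality of $X$ is not needed for $\Phi(X)$ to contain the orbit closure of $(\Gamma)_{\Gamma\in S}$; closedness and invariance of $\Phi(X)$ suffice. Second, the existence of $\rho$ with $(y,y')\notin\rho$ follows by applying Proposition \ref{pro:characterizationEquicontinuousStone}(ii) to the entourage $X^2\setminus\{(y,y')\}$.
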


For the proof of Theorem \ref{the:scalesForSubodometersViaEigx} we first study hitting times. 
Let $(X,G)$ be a minimal action. 
For $x\in X$ and $B\subseteq X$ we denote 
\[G(x,B):=\{g\in G;\, g.x\in B\}\] for the set of \emph{hitting times} of $x$ to $B$. 

\begin{lemma}
\label{lem:EigxDescriptionGTiles}
    Let $(X,G)$ be a minimal action and $x\in X$. 
    For any $x\in X$ we have that 
    $\Eig_x(X,G)=\{G(x,B);\, B~G\text{-tile with } x\in B\}$.
\end{lemma}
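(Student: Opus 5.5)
We prove the two inclusions separately, translating between factor maps onto $G/\Gamma$ and $G$-tiles via preimages and hitting times.

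For ``$\subseteq$'', let $\Gamma\in\Eig_x(X,G)$, witnessed by a factor map $\pi\colon X\to G/\Gamma$ with $\pi(x)=\Gamma$. We put $B:=\pi^{-1}(\{\Gamma\})$. Since $G/\Gamma$ is finite and discrete, $B$ is clopen, and it contains $x$. Equivariance gives $g.B=\pi^{-1}(\{g\Gamma\})$, so $\{g.B;\, g\in G\}$ is exactly the family of fibres of $\pi$. By surjectivity these fibres are non-empty, and they partition $X$; hence $B$ is a $G$-tile. Moreover $g.x\in B$ holds if and only if $g\Gamma=\pi(g.x)=\Gamma$, that is, if and only if $g\in\Gamma$. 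Thus $G(x,B)=\Gamma$.

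For ``$\supseteq$'', let $B$ be a $G$-tile with $x\in B$ and put $\Gamma:=G(x,B)$.

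First, $\Gamma$ is a subgroup. Since $g.B$ and $B$ are both members of the induced partition, they either coincide or are disjoint. If $g.x\in B$, then $g.B\cap B\neq\emptyset$, so $g.B=B$. Conversely, $g.B=B$ together with $x\in B$ gives $g.x\in B$. Hence $\Gamma=\{g\in G;\, g.B=B\}$, which is the stabilizer of the element $B$ under the action of $G$ on the induced partition, and therefore a subgroup.

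Second, $\Gamma$ has finite index. The map $g\Gamma\mapsto g.B$ is a well-defined bijection from $G/\Gamma$ onto the induced partition, and this partition is finite by the remark following the definition of $G$-tiles.

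Now define $\pi\colon X\to G/\Gamma$ by $\pi(y):=g\Gamma$, where $g$ is chosen so that $y\in g.B$. This is well defined because the sets $g.B$ partition $X$ and $g.B=h.B$ holds if and only if $g\Gamma=h\Gamma$. The map $\pi$ is continuous, since each fibre $g.B$ is open. It is equivariant, because $y\in g.B$ implies $h.y\in hg.B$. It is surjective by construction, and $\pi(x)=\Gamma$. Hence $\Gamma\in\Eig_x(X,G)$.

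The only point requiring care is that $G(x,B)$ really is a group. This rests on the dichotomy that distinct translates of a tile are disjoint, which is exactly the partition property. Minimality is not even needed here, beyond being the standing assumption.
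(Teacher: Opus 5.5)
Your proof is correct and follows the same idea as the paper. The tile is the fibre $\pi^{-1}(\{\Gamma\})$; conversely, $G(x,B)$ is the stabilizer of $B$ in the finite induced partition, and this identifies the quotient by that partition with $G/\Gamma$. The only difference is that you verify these facts by hand, whereas the paper routes them through Lemma \ref{lem:GtilesVSInvariantEquivalenceRelationEntourages} (whose part (ii) uses minimality) and the stabilizer classification of finite minimal actions; your direct version shows that minimality is not actually needed.
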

\begin{proof}
'$\subseteq$':
    Consider $\Gamma\in \Eig_x(X,G)$ and a factor map $\pi\colon X\to G/\Gamma$ with $\pi(x)=\Gamma$. 
    Denote $\rho:=R(\pi)$ for the fibre relation and note that $\rho$ is an invariant closed equivalence relation. 
    Since $G/\Gamma$ is finite it establishes finitely many equivalence classes which are easily observed to be clopen. 
    It follows that $\rho$ is a finite union of clopen sets and hence clopen. 
    In particular $\rho$ is an entourage and Lemma \ref{lem:GtilesVSInvariantEquivalenceRelationEntourages} yields that $B:=B_\rho(x)$ is a $G$-tile containing $x$ that satisfies 
    $G(x,B)=G(\pi(x),\pi(B))=G(\Gamma,\{\Gamma\})=G_0(\Gamma)=\Gamma.$

'$\supseteq$':
    Consider a $G$-tile $B$ with $x\in B$. 
    Denote $\rho$ for the induced equivalence relation and recall from Lemma \ref{lem:GtilesVSInvariantEquivalenceRelationEntourages} that $\rho$ is an invariant clopen entourage. 
    In particular, it is an invariant closed equivalence relation. 
    Consider the factor map $\pi\colon X\to G/\rho$ and note that $G/\rho$ is finite. 
    We thus have 
    \begin{align*}
        \Gamma:=G(x,B)=G(\pi(x),\pi(B))=G(\pi(x),\{\pi(x)\})=G_0(\pi(x))\in \subfin(G).
    \end{align*}
    It follows from $G_0(\pi(x))=\Gamma$ that there exists a conjugacy $\iota\colon X\big/\rho\to G/\Gamma$ with $\iota(\pi(x))=\Gamma$. 
    This shows $G(x,B)=\Gamma\in \Eig_x(X,G)$. 
\end{proof}

To show that $\Eig_x(X,G)$ is a scale it is convenient to first establish the following. 

\begin{lemma}
\label{lem:GTileNeighbourhoodsIntersectionClosed}
    Let $(X,G)$ be a minimal action and $x\in X$. 
    For $G$-tiles $B$ and $B'$ containing $x$ there exists a $G$-tile $B''$ with $x\in B''\subseteq B\cap B'$. 
\end{lemma}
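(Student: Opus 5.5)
The plan is to pass from the two tiles to their induced equivalence relations and intersect them. By Lemma \ref{lem:GtilesVSInvariantEquivalenceRelationEntourages}(i), the equivalence relations $\rho$ and $\rho'$ induced by $B$ and $B'$ are invariant clopen entourages. The same lemma also gives $B_\rho(x)=B$ and $B_{\rho'}(x)=B'$, since $x\in B\cap B'$.

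Next set $\rho'':=\rho\cap\rho'$ and check its properties.
\begin{itemize}
\item It is an equivalence relation, being an intersection of equivalence relations.
\item It is invariant, because both $\rho$ and $\rho'$ are invariant.
\item It is clopen and contains $\Delta_X$, being a finite intersection of clopen sets each containing the diagonal. Hence $\rho''$ is a neighborhood of the diagonal, i.e.\ $\rho''\in\mathbb{U}_X$.
\end{itemize}
Lemma \ref{lem:GtilesVSInvariantEquivalenceRelationEntourages}(ii) then applies, using minimality of $(X,G)$, and shows that $B'':=B_{\rho''}(x)$ is a $G$-tile.

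It remains to check $x\in B''\subseteq B\cap B'$. Reflexivity gives $x\in B''$. Moreover,
\[B''=\rho''[x]=\rho[x]\cap\rho'[x]=B_\rho(x)\cap B_{\rho'}(x)=B\cap B',\]
which in fact gives equality, so the conclusion holds.

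The only point needing care is that $\rho''$ is an entourage. Invariance and the equivalence-relation property are formal, and being an entourage follows from the clopenness supplied by Lemma \ref{lem:GtilesVSInvariantEquivalenceRelationEntourages}(i). Everything else is a direct application of the two parts of that lemma.
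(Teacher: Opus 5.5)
Your proof is correct and is essentially the paper's argument: intersect the invariant clopen entourages induced by $B$ and $B'$, then apply Lemma \ref{lem:GtilesVSInvariantEquivalenceRelationEntourages}(ii) to obtain the tile $B_{\rho\cap\rho'}(x)$. Your extra observation that this tile equals $B\cap B'$ is also correct, since $B_\rho(x)=B$ and $B_{\rho'}(x)=B'$; it slightly sharpens the statement, whereas the paper only records the inclusion.
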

\begin{proof}
    Denote $\rho$ and $\rho'$ for the equivalence relations induced by $B$ and $B'$, respectively. 
    Denote $\xi:=\rho\cap \rho'$ and note that $\xi$ is an equivalence relation. 
    It follows from Lemma \ref{lem:GtilesVSInvariantEquivalenceRelationEntourages} that $\rho$ and $\rho'$ are invariant clopen entourages and hence also $\xi$ is an invariant clopen entourage. 
    Thus, by Lemma \ref{lem:GtilesVSInvariantEquivalenceRelationEntourages} 
     $B'':=B_\xi(x)\subseteq B\cap B'$ is a $G$-tile that contains $x$. 
\end{proof}

We will also need the following. 

\begin{lemma}
\label{lem:existenceFactorMaps}
    Let $(X,G)$ and $(Y,G)$ be equicontinuous minimal actions and consider $x\in X$ and $y\in Y$. 
    There exists a factor map $\pi\colon X\to Y$ with $\pi(x)=y$ if and only if for any neighborhood $V$ of $y$ there exists a neighborhood $U$ of $x$ such that 
    $G(x,U)\subseteq G(y,V)$. 
\end{lemma}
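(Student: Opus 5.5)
The forward direction is immediate from continuity. Given a factor map $\pi$ with $\pi(x)=y$ and a neighborhood $V$ of $y$, put $U:=\pi^{-1}(V)$, which is a neighborhood of $x$. For $g\in G(x,U)$ we have $g.y=g.\pi(x)=\pi(g.x)\in V$, hence $G(x,U)\subseteq G(y,V)$. Neither equicontinuity nor minimality is needed here.

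For the converse I would construct $\pi$ on the orbit $G.x$ by $\pi(g.x):=g.y$ and extend it by uniform continuity. The plan has four steps.

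\emph{Well-definedness on the orbit.} Suppose $g.x=h.x$. Then $h^{-1}g\in G_0(x)\subseteq G(x,U)$ for every neighborhood $U$ of $x$. By hypothesis $h^{-1}g\in G(y,V)$ for every neighborhood $V$ of $y$. Since $Y$ is Hausdorff, this forces $h^{-1}g.y=y$, so $g.y=h.y$.

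\emph{Uniform continuity on the orbit.} This is the main step, and it is where equicontinuity enters. Let $\epsilon\in\mathbb{U}_Y$ be given. By equicontinuity of $(Y,G)$, choose an invariant $\epsilon'\in\mathbb{U}_Y$ and a symmetric $\epsilon''\in\mathbb{U}_Y$ with $\epsilon''\epsilon''\subseteq\epsilon'\subseteq\epsilon$. Put $V:=B_{\epsilon''}(y)$ and take $U$ from the hypothesis. By equicontinuity of $(X,G)$, choose an invariant symmetric $\delta\in\mathbb{U}_X$ with $B_\delta(x)\subseteq U$. Now let $(g.x,h.x)\in\delta$. Invariance of $\delta$ gives $(h^{-1}g.x,x)\in\delta$, so $h^{-1}g.x\in U$ (using symmetry if needed). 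Then $h^{-1}g\in G(x,U)\subseteq G(y,V)$, i.e.\ $h^{-1}g.y\in B_{\epsilon''}(y)$, which gives $(h^{-1}g.y,y)\in\epsilon''$ up to orientation. Invariance of $\epsilon'$ then yields $(g.y,h.y)\in\epsilon$. So $\pi$ is uniformly continuous on $G.x$. The care required is in tracking the orientation convention $B_\epsilon(x)=\epsilon[x]$, which is why symmetric entourages are used throughout.

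\emph{Extension.} By minimality, $G.x$ is dense in $X$. Since $Y$ is compact Hausdorff, hence complete, the uniformly continuous map $\pi$ extends uniquely to a uniformly continuous map $\pi\colon X\to Y$. Alternatively, one can define $\pi(x')$ as the limit of $g_i.y$ along any net with $g_i.x\to x'$; these nets are Cauchy, so the limit exists and is independent of the net.

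\emph{Equivariance and surjectivity.} The maps $\pi\circ g$ and $g\circ\pi$ are continuous and agree on the dense set $G.x$, so they agree everywhere and $\pi$ is equivariant. Since $(Y,G)$ is minimal, any equivariant map into it is a factor map, so $\pi$ is surjective. Finally, $\pi(x)=y$ by construction.
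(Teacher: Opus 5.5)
Your proposal is correct and follows the paper's proof essentially step for step. You define $g.x\mapsto g.y$ on the orbit (well defined because $G_0(x)\subseteq G_0(y)$), prove uniform continuity on $G.x$ using invariant entourages on both sides, and extend to $X$ by density of $G.x$ and completeness of $Y$. Beyond that you only spell out details the paper leaves implicit, namely well-definedness via the Hausdorff property and equivariance via density. Your symmetric $\epsilon''$ with $\epsilon''\epsilon''\subseteq\epsilon'$ is harmless but unnecessary, since with the convention $B_\epsilon(y)=\epsilon[y]$ the orientations already match.
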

\begin{proof}
    Since $\pi$ is continuous the first property implies the second. 
    For the converse note that the second property implies that $G_0(x)\subseteq G_0(y)$. 
    This observation allows us to consider the mapping 
    $\phi\colon G.x\to Y$ given by $g.x\mapsto g.y$.
    By minimality $G.x$ is dense in $X$ and $\phi$ has a dense image. 
    We next show that $\phi$ is \emph{uniformly continuous on $G.x$}, i.e.\ that for any $\epsilon\in \mathbb{U}_Y$ there exists $\delta\in \mathbb{U}_X$ such that
    for all $x_1,x_2\in G.x$ with $(x_1,x_2)\in \delta$ we have $(\phi(x_1),\phi(x_2))\in \epsilon$. 
    This will allow to extend $\phi$ to a continuous map $\pi\colon X\to Y$ \cite[Theorem 6.26]{kelley2017general}. It is straightforward to verify that $\pi$ is then a factor map that satisfies $\pi(x)=y$. 

    To show the uniform continuity of $\phi$ on $G.x$ 
    consider $\epsilon\in \mathbb{U}_Y$. 
    Since $X$ is equicontinuous, we assume w.l.o.g.\ that $\epsilon$ is invariant. 
    We observe that $V:=B_\epsilon(y)$ is a neighborhood of $y$ and by our assumption there exists a neighborhood $U$ of $x$, such that $G(x,U)\subseteq G(y,V)$. 
    Choose an invariant $\delta\in \mathbb{U}_X$ such that $B_\delta(x)\subseteq U$. 
    For $x_1,x_2 \in G.x$ with $(x_1,x_2)\in \delta$ there exist $g_1,g_2\in G$ with $g_i.x=x_i$. 
    Since $\delta$ is invariant we have 
    $g_2^{-1}g_1.x\in B_\delta(x)\subseteq U$ 
    and hence 
    $g_2^{-1}g_1\in G(x,U)\subseteq G(y,V)$. 
    It follows that
    $g_2^{-1}g_1.y\in V=B_\epsilon(y)$ 
    and the invariance of $\epsilon$ yields
    $(\phi(x_1),\phi(x_2))=(g_1.y,g_2.y)\in \epsilon.$
\end{proof}

\begin{proof}[Proof of Theorem \ref{the:scalesForSubodometersViaEigx}:]
(i):
    Let $(X,G)$ be a minimal action and $x\in X$. 
    Clearly, $\Eig_x(X,G)$ is upward closed. 
    Since $B\mapsto G(x,B)$ is monotone w.r.t.\ set inclusion we observe $\Eig_x(X,G)$ to be a scale from the Lemmas \ref{lem:EigxDescriptionGTiles} and \ref{lem:GTileNeighbourhoodsIntersectionClosed}. 

(ii):
    Let $(X,G)$ be a subodometer and $x\in X$. 
    Abbreviate $S:=\Eig_x(X,G)$ and denote $(X',G)$ for the $S$-odometer and $x':=(\Gamma)_{\Gamma\in S}$. 
    Since minimal equicontinuous actions are coalescent it suffices to show that $(X,G)$ and $(X',G)$ are mutual factors of each other. 

    For each $\Gamma\in S$ denote $\pi_\Gamma\colon X\to G/\Gamma$ for the factor map with $\pi_\Gamma(x)=\Gamma$. 
    Consider the map $\hat{\pi}\colon X\to \prod_{\Gamma\in S} G/\Gamma$ given by $y\mapsto (\pi_\Gamma(y))_{\Gamma\in S}$ and note that $\hat{\pi}$ is an equivariant embedding that satisfies $\hat{\pi}(x)=x'$. 
    In particular, the image of $\hat{\pi}$ is given by $X'$ and the restriction of $\hat{\pi}$ establishes a factor map $\pi\colon X\to X'$.

    To show that $(X,G)$ is a factor of $(X',G)$ we use Lemma \ref{lem:existenceFactorMaps}. Let $V$ be a neighborhood of $x$. 
    By Theorem \ref{the:characterizationGTiles} there exists a $G$-tile $B$ with $x\in B\subseteq V$. 
    From Lemma \ref{lem:EigxDescriptionGTiles} we know that 
    $\Lambda:=G(x,B)\in \Eig_x(X,G)=S$. 
    Denote $\phi_\Lambda\colon X'\to G/\Lambda$ for the restriction of the projection $\prod_{\Gamma\in S}G/\Gamma\to G/\Lambda$ and note that $\phi_\Lambda$ is a factor map with $\phi_\Lambda(x')=\Lambda$. 
    Thus, $U:=\phi_\Lambda^{-1}(\Lambda)$ is an open neighborhood of $x'$ that satisfies 
    \begin{align*}
        G(x',U)
        &=G(\phi_\Lambda(x'),\phi_\Lambda(U))
        =G(\Lambda,\{\Lambda\})
        =G_0(\Lambda)\\
        &=\Lambda
        =G(x,B)
        \subseteq G(x,V).
    \end{align*}
    This shows that $(X,G)$ is a factor of $(X',G)$
\end{proof}

\section{Factors and eigenvalues}
\label{sec:eigenvalues}

\subsection{Factors of subodometers}
\begin{proposition}
\label{pro:factorsOfSubodometers}
    Any factor of a subodometer is also a subodometer. 
\end{proposition}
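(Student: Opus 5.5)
Let $\pi\colon X\to Y$ be a factor map, where $(X,G)$ is a subodometer. We must show that $(Y,G)$ is minimal, equicontinuous, and that $Y$ is a Stone space.

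Minimality is immediate, since the image of a dense orbit is a dense orbit. Equicontinuity of factors of minimal equicontinuous actions is standard, but it also drops out of the argument below. So the real content is that $Y$ is totally disconnected. A continuous image of a Stone space need not be Stone, so the equivariance has to enter somewhere.

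The plan is to produce a base of $G$-tiles for $Y$ and then invoke Theorem \ref{the:characterizationGTiles}. Let $y\in Y$ and let $V$ be an open neighborhood of $y$. Using Proposition \ref{pro:characterizationEquicontinuousStone}(ii), we get a base of $\mathbb{U}_X$ consisting of invariant (clopen) equivalence relations. The key step is to find an invariant equivalence relation $\rho\in\mathbb{U}_X$ that is \emph{saturated} with respect to the fibres, i.e.\ $R(\pi)\rho R(\pi)$ should again be small.

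For this, I would instead work with relations pushed forward to $Y$. For an invariant equivalence relation $\rho\in\mathbb{U}_X$, set
\[\sigma:=(\pi\times\pi)(\rho)\subseteq Y^2.\]
This is a compact, invariant, reflexive and symmetric relation. Since $\pi\times\pi$ is a closed surjection and $\rho$ is a neighborhood of $\Delta_X\supseteq(\pi\times\pi)^{-1}$-preimages of diagonal points only partially, one must check that $\sigma$ is a neighborhood of $\Delta_Y$. This is the main obstacle. Neither $\sigma\in\mathbb{U}_Y$ nor transitivity is automatic.

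To handle it, I would use equicontinuity in a more structural way. First take the quotient $X/\rho$, a finite minimal action. Then, using that $(Y,G)$ is a factor of a minimal equicontinuous action, show $(Y,G)$ is equicontinuous. For this I would appeal to the standard fact from \cite{auslander1988minimal} that factors of equicontinuous actions are equicontinuous: the image of the invariant base entourages under the closed map $\pi\times\pi$ forms a base of $\mathbb{U}_Y$. The verification uses compactness: if $\epsilon\in\mathbb{U}_Y$, then $(\pi\times\pi)^{-1}(\epsilon)\in\mathbb{U}_X$ contains an invariant $\rho$, and conversely the intersection of all $(\pi\times\pi)(\rho)$ over invariant $\rho$ equals $\Delta_Y$. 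Hence the images $(\pi\times\pi)(\rho)$, closed and invariant, eventually lie in any open $\epsilon$.

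Once $(Y,G)$ is known to be minimal equicontinuous, it remains to find clopen $G$-tiles. Here is the concrete route. Fix $y=\pi(x)$. By minimality and equicontinuity, the fibre relation $R(\pi)$ is closed and invariant, and $Y\cong X/R(\pi)$. Consider the family of invariant clopen equivalence relations $\rho\supseteq R(\pi)$ on $X$; each descends to an invariant clopen equivalence relation on $Y$, because its classes are $\pi$-saturated clopen sets and $\pi$ is closed. I would show that these form a base of $\mathbb{U}_Y$.

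For each invariant clopen $\rho$, let $\rho^\ast$ be the smallest equivalence relation containing $\rho R(\pi)\rho$. It is still invariant. It is clopen because it is generated by finitely many classes: it is the join of $\rho$ with the closed relation $R(\pi)$ on the finite quotient $X/\rho$. As $\rho$ shrinks, $\rho^\ast$ shrinks to $R(\pi)$ by a compactness argument, since $\bigcap_\rho\rho^\ast=R(\pi)$, the latter being closed and invariant with $R(\pi)[\cdot]$ fibres. Pushing forward to $Y$ then gives a base of invariant clopen equivalence relations. Proposition \ref{pro:characterizationEquicontinuousStone}(ii) finishes the proof.

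The delicate point I expect is proving $\bigcap_\rho\rho^\ast=R(\pi)$, which is where equicontinuity of $Y$ (invariant entourages) is used.
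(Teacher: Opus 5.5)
\paragraph{Verdict.} The proof has a genuine gap: its key step is left unproved and is equivalent to the proposition itself.

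\paragraph{The gap.} Your construction of $\rho^\ast$ is correct as far as it goes. It is invariant, clopen (a union of $\rho$-classes), contains $R(\pi)$, and descends to an invariant clopen equivalence relation on $Y$. But the whole argument then rests on the identity $\bigcap_\rho\rho^\ast=R(\pi)$, which you do not prove. This identity is not an auxiliary lemma; it \emph{is} the proposition.

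To see this, note that the relations $\rho^\ast$ are exactly the pullbacks $(\pi\times\pi)^{-1}(\sigma)$ of invariant clopen equivalence relations $\sigma$ on $Y$. Such a pullback $\rho$ already contains $R(\pi)$, so $\rho^\ast=\rho$. Hence $\bigcap_\rho\rho^\ast=R(\pi)$ says precisely that the invariant clopen partitions of $Y$ separate points. Given that $Y$ is minimal equicontinuous, this is the same as saying that $Y$ is a Stone space.

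\paragraph{Why your suggested tool cannot close it.} A pair in $\rho^\ast$ is joined by an alternating chain $x_0\,R(\pi)\,x_1\,\rho\,x_2\,R(\pi)\,x_3\cdots x_n$. Its length is bounded only by the number of $\rho$-classes, which grows without bound as $\rho$ shrinks. Projected to $Y$, this is a chain of steps in a small invariant entourage $\delta$. Equicontinuity of $Y$ says nothing about $\bigcup_n\delta^n$: for a minimal irrational circle rotation, $\bigcup_n\delta^n=Y^2$ for every entourage $\delta$.

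Without dynamics the identity is simply false. Let $\pi$ be a continuous surjection of the Cantor set onto $[0,1]$. Every clopen equivalence relation containing $R(\pi)$ has clopen saturated classes, whose images are clopen in $[0,1]$. So $\rho^\ast=X^2$ for every $\rho$.

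\paragraph{The missing ingredient.} It is the fact the paper uses: factor maps between minimal equicontinuous actions are open \cite[Theorem 7.3]{auslander1988minimal}. Since factor maps are also closed, $\pi$ maps clopen sets to clopen sets. The image of a clopen base of $X$ is then a clopen base of $Y$. Together with minimality and equicontinuity of $Y$, this finishes the proof, and the detour through $\rho^\ast$ becomes unnecessary.

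Openness is also what your sketch of the equicontinuity of $Y$ lacks. Showing $\bigcap_\rho(\pi\times\pi)(\rho)=\Delta_Y$ only shows that the images $(\pi\times\pi)(\rho)$ are eventually small. It does not show they are neighborhoods of $\Delta_Y$, which fails for general closed surjections. Since you fall back on the standard fact there, this point is secondary.
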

\begin{proof}
    Let $(X,G)$ be a subodometer and consider a factor map $\pi\colon X\to Y$. 
    Since $(X,G)$ is minimal and equicontinuous, so is $(Y,G)$. 
    To show that $Y$ is a Stone space, let $\mathcal{B}$ be a clopen base for the topology of $X$. 
    As a factor map $\pi$ is closed. 
    Furthermore, factor maps between minimal equicontinuous actions are open \cite[Theorem 7.3]{auslander1988minimal}. 
    Thus, $\mathcal{B}':=\{\pi(B);\, B\in \mathcal{B}\}$ consists of clopen sets. 
    To show that $\mathcal{B}'$ is a base for the topology of $Y$ consider $y\in Y$ and an open neighborhood $U$ of $y$. 
    Choose $x\in \pi^{-1}(y)$. 
    Since $\pi^{-1}(U)$ is open we find $B\in \mathcal{B}$ with $x\in B\subseteq \pi^{-1}(U)$. 
    We thus have $y=\pi(x)\in \pi(B)\subseteq \pi(\pi^{-1}(U))=U$. 
    This establishes $\mathcal{B}'$ as a base for the topology of $Y$. 
\end{proof}

\subsection{Eigenvalues of minimal actions}
\label{subsec:eigenvalues_EigenvaluesOfMinimalActions}

\begin{definition}
    Let $(X,G)$ be a minimal action. 
    A finite index subgroup $\Gamma\leq G$ is called an \emph{eigenvalue} of $(X,G)$, whenever there exists a factor map $\pi\colon X\to G/\Gamma$. We denote $\Eig(X,G)$ for the set of all eigenvalues of $(X,G)$. 
\end{definition}

\begin{remark}
    In the case of $\mathbb{Z}$-actions, an \emph{eigenvalue} is usually defined as a real number $\alpha$ such that there exists a factor map from the system $(X,\mathbb{Z})$ onto the rotation of the circle by angle $\alpha$. When $\alpha\in\mathbb{Q}$, one says that $\alpha$ is a \emph{rational eigenvalue}. In this setting, what we call an eigenvalue in this article corresponds to subgroups of the form $p\mathbb{Z}\subseteq \mathbb{Z}$ for which $1/p$ is a rational eigenvalue. See, for instance, \cite{ormes1997strong} for results involving the rational eigenvalues of a minimal $\mathbb{Z}$-action on the Cantor set.
\end{remark}

Note that whenever $(Y,G)$ is a factor of $(X,G)$, then $\Eig(Y,G)$ is contained in $\Eig(X,G)$. In particular conjugated minimal actions have the same eigenvalues.
In general $\Eig$ is not a complete conjugacy invariant as illustrated by the next example. 

\begin{example}
    Let $(X,G)$ be an action for which $X$ is connected. 
    Any factor of $X$ is connected and hence $\Eig(X,G)=\{G\}$. 
    Thus $X$ and the trivial action on one point have the same eigenvalues. 
\end{example}

We next show that $\Eig$ is a complete conjugacy invariant for subodometers.  

\begin{theorem}
\label{the:eigConjugacyInvariant}
    \begin{itemize}
        \item[(i)] Let $(X,G)$ be a minimal action. 
        A subodometer $(Y,G)$ is a factor of $(X,G)$ if and only if $\Eig(Y,G)\subseteq \Eig(X,G).$
        \item[(ii)] Two subodometers are conjugated if and only if they have the same eigenvalues. 
    \end{itemize}     
\end{theorem}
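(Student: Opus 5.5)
Part (i) carries the argument, and (ii) follows from it together with coalescence.

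For the easy direction of (i), suppose $\pi\colon X\to Y$ is a factor map. Composing any factor map $Y\to G/\Gamma$ with $\pi$ gives $\Eig(Y,G)\subseteq\Eig(X,G)$.

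For the converse, assume $\Eig(Y,G)\subseteq \Eig(X,G)$ and fix $y\in Y$. By Theorem \ref{the:scalesForSubodometersViaEigx}, $(Y,G)$ is conjugated to the $S$-subodometer with $S:=\Eig_y(Y,G)$, via a conjugacy sending $y$ to $(\Gamma)_{\Gamma\in S}$. It therefore suffices to find $x\in X$ and a compatible family of factor maps $\pi_\Gamma\colon X\to G/\Gamma$, $\Gamma\in S$, with $\pi_\Gamma(x)=\Gamma$. Such a family gives an equivariant map $X\to\prod_{\Gamma\in S}G/\Gamma$ with image in $\varprojlim_{\Gamma\in S} G/\Gamma$, and by minimality this map is onto the $S$-subodometer.

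The natural choice is to show that $S\subseteq \Eig_x(X,G)$ for a single well-chosen $x$. Then for $\Gamma\in S$ the factor map $\pi_\Gamma\colon X\to G/\Gamma$ with $\pi_\Gamma(x)=\Gamma$ is unique, since it is determined on the dense orbit $G.x$. Uniqueness gives compatibility: for $\Gamma\le\Lambda$ in $S$, the map $\pi^\Gamma_\Lambda\circ\pi_\Gamma$ sends $x$ to $\Lambda$, so it equals $\pi_\Lambda$.

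Producing such an $x$ is the main obstacle. Knowing only that each $\Gamma\in S$ lies in $\Eig(X,G)$, we get for each $\Gamma$ some point $x_\Gamma$ with $\Gamma\in\Eig_{x_\Gamma}(X,G)$, but these points may differ. I would handle this by compactness. For $\Gamma\in S$ let
\[
C_\Gamma:=\{x\in X;\, \Gamma\in \Eig_x(X,G)\}.
\]
Each $C_\Gamma$ is non-empty, since $\Gamma\in\Eig(X,G)$.

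\emph{Closedness.} Fix one factor map $\pi\colon X\to G/\Gamma$. Any other factor map to $G/\Gamma$ that sends some point to $\Gamma$ differs from $\pi$ by a conjugacy of $G/\Gamma$, i.e.\ by a coset relabelling $h\Gamma\mapsto hg\Gamma$ with $g$ normalizing $\Gamma$. Hence $C_\Gamma$ is a finite union of fibres $\pi^{-1}(g\Gamma)$ of the fixed map $\pi$, and these fibres are clopen. So $C_\Gamma$ is closed.

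\emph{Finite intersection property.} The sets $C_\Gamma$ are monotone: if $\Gamma\le\Lambda$, then $C_\Gamma\subseteq C_\Lambda$ by composing with $\pi^\Gamma_\Lambda$. Since $S$ is a scale, any finitely many $\Gamma_1,\dots,\Gamma_n\in S$ have a common lower bound $\Lambda\in S$, and $\emptyset\neq C_\Lambda\subseteq\bigcap_i C_{\Gamma_i}$.

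By compactness of $X$ there is some $x\in\bigcap_{\Gamma\in S}C_\Gamma$. Then $S\subseteq\Eig_x(X,G)$, which completes (i).

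For (ii), conjugated actions have the same eigenvalues. Conversely, if two subodometers have the same eigenvalues, then by (i) each is a factor of the other. Minimal equicontinuous actions are coalescent, so mutual factors are conjugated.
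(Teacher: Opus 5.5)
Your argument follows the paper's proof almost verbatim. Your $C_\Gamma$ is the paper's $A_\Gamma$, and non-emptiness, monotonicity, the finite intersection property via the scale property, compactness, and the assembly of the map into $\prod_{\Gamma\in S}G/\Gamma$ are all handled the same way. Your compatibility check via uniqueness is correct, though not needed, since the image is a minimal set containing $(\Gamma)_{\Gamma\in S}$.

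The one step that fails as written is your proof that $C_\Gamma$ is closed. You claim that any two factor maps $X\to G/\Gamma$ differ by a self-conjugacy of $G/\Gamma$. This is false in general for non-normal $\Gamma$. Post-composing with a bijection of $G/\Gamma$ preserves the fibre partition, but distinct factor maps can have distinct fibre partitions.

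For example, let $G=S_3$ act on $X=S_3$ by left multiplication, and let $\Gamma=\langle(12)\rangle$. Then $\pi(h)=h\Gamma$ and $\pi'(h)=h(13)\Gamma$ are both factor maps onto $G/\Gamma$. The fibre of $\pi$ through $e$ is $\Gamma=\{e,(12)\}$, while that of $\pi'$ is $\Gamma^{(13)}=\{e,(23)\}$. Since $\Gamma$ is self-normalizing, your description would give $C_\Gamma=\pi^{-1}(\Gamma)=\{e,(12)\}$. In fact $C_\Gamma=X$, because for each $x$ the map $h\mapsto hx^{-1}\Gamma$ is a factor map sending $x$ to $\Gamma$.

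The conclusion that $C_\Gamma$ is closed is nevertheless true, and the repair is the paper's argument. Since $X$ is minimal, a factor map $X\to G/\Gamma$ is determined by the image of one fixed point. So there are at most $[G\colon\Gamma]$ such maps (Lemma \ref{lem:finitelyManyFactorMaps}). Hence $C_\Gamma=\bigcup_{\pi'}\pi'^{-1}(\Gamma)$, the union over all of them, is a finite union of closed sets. With this replacement the rest of your proof goes through unchanged.
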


For the proof we first observe the following. 

\begin{lemma}
\label{lem:finitelyManyFactorMaps}
    Let $\pi\colon X\to Y$ be a factor map. If $Y$ is finite, then there are at most $|Y|$ factor maps $X\to Y$. 
\end{lemma}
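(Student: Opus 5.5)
Context: the lemma assumes the existence of some factor map $\pi\colon X\to Y$. This tells us that $(X,G)$ is minimal whenever the setting is minimal. The statement does not say so, however, and minimality of $Y$ can't be deduced without assuming $X$ minimal. So I will assume, as throughout the section, that $(X,G)$ is minimal. Then $Y$ is a finite minimal action, i.e.\ a finite subodometer.

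The plan is to show that a factor map $X\to Y$ is determined by the image of one fixed point. Fix $x\in X$. If $\psi,\psi'\colon X\to Y$ are factor maps with $\psi(x)=\psi'(x)$, then equivariance gives $\psi(g.x)=g.\psi(x)=g.\psi'(x)=\psi'(g.x)$ for all $g\in G$. Hence $\psi$ and $\psi'$ agree on the orbit $G.x$. By minimality $G.x$ is dense in $X$. Since $\psi$ and $\psi'$ are continuous maps into the Hausdorff (discrete, finite) space $Y$, the set where they agree is closed, so $\psi=\psi'$. Thus the assignment $\psi\mapsto\psi(x)$ is an injection from the set of factor maps $X\to Y$ into $Y$. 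The number of such maps is therefore at most $|Y|$.

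There is essentially no obstacle. The only point needing care is that the agreement set $\{z;\,\psi(z)=\psi'(z)\}$ is closed; this follows because it is the preimage of the diagonal of $Y\times Y$ under $(\psi,\psi')$, and that diagonal is closed since $Y$ is Hausdorff. The hypothesis that some factor map $\pi$ exists is used only to make the statement non-vacuous, together with the standing minimality assumption on $X$.
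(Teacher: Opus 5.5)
Your proof is correct and is the same argument as the paper's: a factor map $X\to Y$ is determined by its value at one fixed point, because equivariance fixes it on the dense orbit and continuity into the Hausdorff space $Y$ then fixes it on all of $X$. You are also right that minimality of $(X,G)$ is an unstated hypothesis; the paper's one-line proof uses it in exactly the same way ("Since $X$ is minimal\dots").
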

\begin{proof}
    Fix $x_0\in X$. 
    Since $X$ is minimal any factor map $\pi\colon X\to Y$ is determined by $\pi(x_0)$.
    For the latter we have at most $|Y|$ many choices. 
\end{proof}

\begin{proof}[Proof of Theorem \ref{the:eigConjugacyInvariant}:]
Note that (ii) follows from (i), since minimal equicontinuous actions are coalescent. In order to show (i) it remains to show that $\Eig(Y,G)\subseteq \Eig(X,G)$ implies that $(Y,G)$ is a factor of $(X,G)$. 
    By Theorem \ref{the:scalesForSubodometersViaEigx} we assume w.l.o.g.\ that $(Y,G)$ is the $S$-subodometer for some scale $S\subseteq \subfin(G)$. 

    For $\Gamma\in S$ we denote $A_\Gamma$ for the set of all $x\in X$ for which there exists a factor map $\pi\colon X\to G/\Gamma$ with $\pi(x)=\Gamma$. 
    Since $\Gamma\in \Eig(X,G)$ we observe $A_\Gamma$ to be non-empty. 
    Recall from Lemma \ref{lem:finitelyManyFactorMaps} that there exist only finitely many factor maps $X\to G/\Gamma$. 
    Thus, the union
    $A_\Gamma = \bigcup_\pi \pi^{-1}(\Gamma),$
    taken over all such factor maps $\pi$, is a finite union of closed subsets of $X$ and hence closed. 
    This shows that $\mathcal{A}:=\{A_\Gamma;\, \Gamma\in S\}$ consists of non-empty closed subsets of $X$. 

    Consider $\Gamma,\Lambda\in S$ with $\Gamma\subseteq \Lambda$. 
    The canonical factor map $\pi_{\Lambda}^\Gamma\colon G/\Gamma \to G/\Lambda$ satisfies $\pi_{\Lambda}^\Gamma(\Gamma) = \Lambda$ and we observe $A_\Gamma\subseteq A_{\Lambda}$. 
    This shows that the mapping $S\ni \Gamma \mapsto A_\Gamma\in \mathcal{A}$ is monotone.
    Since $S$ is a scale it is straightforward to observe that $\mathcal{A}$ has the \emph{finite intersection property}, i.e.\ that any finite subset $\mathcal{F}\subseteq \mathcal{A}$ satisfies $\bigcap_{A\in \mathcal{F}}A\neq \emptyset$. 
    Since $X$ is compact, we observe that $A:=\bigcap_{A\in \mathcal{A}}A$ is non-empty \cite[Theorem 26.9]{munkres2000topology}. 
    
    Fix $x_0\in A$. 
    For $\Gamma\in S$ we have $x_0\in A_\Gamma$ and hence there exists a factor map $\pi_\Gamma\colon X\to G/\Gamma$ with $\pi_\Gamma(x_0)=\Gamma$. 
    We observe that 
    $\hat{\pi}\colon X\to \prod_{\Gamma\in S} G/\Gamma$ defined by $x\mapsto (\pi_\Gamma(x))$ has $\hat{\pi}(x_0)=(\Gamma)_{\Gamma\in S}$ in its image and hence restricts to a factor map $\pi\colon X\to Y$.   
\end{proof}

\subsection{From scales to eigenvalues}
\label{subsec:eigenvalues_FromScalesToEigenvalues}
Given a scale $S$, it is natural to ask how the eigenvalues of the $S$-subodometer can be derived from $S$. 
To answer this question in detail we introduce the following notions.

\begin{definition}
    Let $S\subseteq \subfin(G)$ be a subset. 
    \begin{itemize}
    \item 
        The \emph{upper hull} $\upperhull{S}$ of $S$ is defined as the set of all $\Gamma\in \subfin(G)$ for which there exists $\Lambda\in S$ with $\Lambda\subseteq \Gamma$. 
    \item 
        The \emph{conjugation hull} $\conhull{S}$ of $S$ is defined as the set of all $\Gamma\in \subfin(G)$ for which there exists $g\in G$ with $\Gamma^g\in S$. 
    \item 
        The \emph{upper conjugation hull} $\eigenhull{S}$ of $S$ is defined as the set of all $\Gamma\in \subfin(G)$ for which there exist $(g,\Lambda)\in G\times S$ with $\Lambda\subseteq \Gamma^g$, i.e.\ by $\eigenhull{S}:=\conhull{(\upperhull{S})}$. 
    \end{itemize}
\end{definition}

\begin{remark}
    \label{rem:upperHullScaleIsFilter}
    For a scale $S$ the upper hull $\upperhull{S}$ is a filter. 
\end{remark}

\begin{remark}
    It is straightforward to show that the intersection $\bigcap_{S\in \mathcal{S}} S$ and the union $\bigcup_{S\in \mathcal{S}} S$ of a family $\mathcal{S}$ of upward closed subsets of $\subfin(G)$ are also upward closed. 
    For a non-empty subset $S\subseteq \subfin(G)$ the upper hull $\upperhull{S}$ is the smallest upward closed subset of $\subfin(G)$ that contains $S$. 
\end{remark}

\begin{definition}
    A subset $S\subseteq \subfin(G)$ is called \emph{conjugation invariant} whenever for $\Gamma\in S$ and $g\in G$ we have $\Gamma^g\in S$. 
\end{definition}

\begin{remark}
    It is straightforward to show that the intersection $\bigcap_{S\in \mathcal{S}} S$ and the union $\bigcup_{S\in \mathcal{S}} S$ of a family $\mathcal{S}$ of conjugation invariant subsets of $\subfin(G)$ are also conjugation invariant. 
    For a non-empty subset $S\subseteq \subfin(G)$ the conjugation hull $\conhull{S}$ is the smallest conjugation invariant subset of $\subfin(G)$ that contains $S$. 
\end{remark}

\begin{remark}
\label{rem:upperConjugationHullProperties}
    Let $S\subseteq \subfin(G)$. 
    Whenever $S$ is conjugation invariant, then also $\upperhull{S}$ is conjugation invariant. 
    Whenever $S$ is upward closed, then also $\conhull{S}$ is upward closed. 
    This allows to observe that $\eigenhull{S}=\upperhull{(\conhull{S})}=\conhull{(\upperhull{S})}$ is the smallest upward closed and conjugation invariant subset of $\subfin(G)$ that contains $S$. 
\end{remark}

\begin{proposition}
\label{pro:conhullOfLocalEigenvalues}
    Let $(X,G)$ be a minimal action. 
    $\Eig(X,G)$ is upward closed and conjugation invariant. 
    For $x\in X$ we have 
    \[\Eig(X,G)=\conhull{\Eig_x(X,G)}.\] 
\end{proposition}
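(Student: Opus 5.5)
The plan is to prove the two inclusions of $\Eig(X,G)=\conhull{\Eig_x(X,G)}$ directly, using the conjugacies between finite actions $G/\Gamma$ and $G/\Gamma^g$ recalled in the preliminaries. Upward closedness and conjugation invariance will then follow almost for free.

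For upward closedness, let $\Gamma\in\Eig(X,G)$ with factor map $\pi\colon X\to G/\Gamma$, and let $\Lambda\in\subfin(G)$ with $\Gamma\subseteq\Lambda$. The canonical factor map $\pi^\Gamma_\Lambda\colon G/\Gamma\to G/\Lambda$ then gives the factor map $\pi^\Gamma_\Lambda\circ\pi\colon X\to G/\Lambda$, so $\Lambda\in\Eig(X,G)$. For conjugation invariance, let $g\in G$. Since $\Gamma=(\Gamma^{g})^{g^{-1}}$, the preliminaries give a conjugacy $\iota\colon G/\Gamma\to G/\Gamma^g$ (it sends $\Gamma\mapsto g^{-1}\Gamma^g$). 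Composing, $\iota\circ\pi\colon X\to G/\Gamma^g$ is a factor map, so $\Gamma^g\in\Eig(X,G)$.

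For the identity, first note $\Eig_x(X,G)\subseteq\Eig(X,G)$ trivially. Since $\Eig(X,G)$ is conjugation invariant and $\conhull{\cdot}$ is the smallest conjugation invariant set containing its argument, we get $\conhull{\Eig_x(X,G)}\subseteq\Eig(X,G)$. For the reverse inclusion, let $\pi\colon X\to G/\Gamma$ be a factor map and write $\pi(x)=h\Gamma$. The finite actions $G/\Gamma$ and $G/\Gamma^{h^{-1}}$ are conjugate, via a conjugacy $\iota$ with $\iota(h\Gamma)=\Gamma^{h^{-1}}$. To see this, note that the stabilizer of $h\Gamma$ is $h\Gamma h^{-1}=\Gamma^h$, while the stabilizer of the coset $\Gamma^{h^{-1}}$ in $G/\Gamma^{h^{-1}}$ is $\Gamma^{h^{-1}}$. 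These stabilizers are not equal, so I need to recheck which conjugate is the right one: I should take $\Lambda:=G_0(h\Gamma)=\Gamma^h$ and use the fact from the preliminaries that finite minimal actions with points of equal stabilizer are conjugate via a map matching those points. This yields a conjugacy $\iota\colon G/\Gamma\to G/\Gamma^h$ with $\iota(h\Gamma)=\Gamma^h$.

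Then $\iota\circ\pi$ is a factor map $X\to G/\Gamma^h$ sending $x\mapsto\Gamma^h$, so $\Gamma^h\in\Eig_x(X,G)$. Since $(\Gamma^h)^{h^{-1}}=\Gamma$, we have $\Gamma\in\conhull{\Eig_x(X,G)}$. The only delicate point is this bookkeeping of which conjugate appears, which I resolve via stabilizers as above; the rest is routine composition of factor maps.
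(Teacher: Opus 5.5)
Your proof is correct and follows the paper's argument. Upward closedness and conjugation invariance come from composing with $\pi^\Gamma_\Lambda$, respectively with a conjugacy $G/\Gamma\to G/\Gamma^g$. The reverse inclusion comes from post-composing $\pi$ (with $\pi(x)=h\Gamma$) with the conjugacy $G/\Gamma\to G/\Gamma^h$, $h\Gamma\mapsto\Gamma^h$, exactly as the paper does.

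In a final write-up, delete the abandoned claim about a conjugacy sending $h\Gamma\mapsto\Gamma^{h^{-1}}$, which is false in general. Also note that $\Gamma^h\in\Eig_x(X,G)$ gives $\Gamma\in\conhull{\Eig_x(X,G)}$ directly from the definition of the conjugation hull.
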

\begin{proof}
    It is straightforward to observe that $\Eig(X,G)$ is upward closed and conjugation invariant. 
    Thus, from $\Eig_x(X,G)\subseteq \Eig(X,G)$ we have $\conhull{\Eig_x(X,G)}\subseteq \Eig(X,G)$. 

    For the converse consider $\Gamma\in \Eig(X,G)$ and a factor map $\pi\colon X\to G/\Gamma$. 
    Let $g\in G$ be such that $\pi(x)=g\Gamma$. 
    As discussed in Subsection \ref{subsec:prelims_actions} there exists a conjugacy $\iota\colon G/\Gamma\to G/\Gamma^g$ with $\iota(g\Gamma)=\Gamma^g$. 
    Thus, the factor map $\phi:=\iota\circ \pi\colon X\to G/\Gamma^g$ satisfies $\phi(x)=\Gamma^g$. 
    This shows $\Gamma^g\in \Eig_x(X,G)$ and hence $\Gamma\in \conhull{\Eig_x(X,G)}$. 
\end{proof}

The transition from a scale to the eigenvalues of the respective subodometer can be described as follows. 

\begin{proposition}
\label{pro:scaleEigensetTransition}
    Let $S\subseteq \subfin(G)$ be a scale and denote $(X,G)$ for the $S$-odometer. Denote $x:=(\Gamma)_{\Gamma\in S}$ and note that $x\in X$.
    We have 
    \begin{itemize}
        \item[(i)] $\Eig_x(X,G)=\upperhull{S}$ and
        \item[(ii)] $\Eig(X,G) = \eigenhull{S}$. 
    \end{itemize}
\end{proposition}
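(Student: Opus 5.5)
We prove (i) by two inclusions and then get (ii) from (i) using Proposition \ref{pro:conhullOfLocalEigenvalues} and Remark \ref{rem:upperConjugationHullProperties}.

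\emph{Inclusion $\upperhull{S}\subseteq \Eig_x(X,G)$ in (i).} For $\Lambda\in S$, the restriction $\phi_\Lambda\colon X\to G/\Lambda$ of the coordinate projection is equivariant. It is surjective because $X$ is minimal and $G/\Lambda$ is a finite minimal action. Since $\phi_\Lambda(x)=\Lambda$, we get $\Lambda\in\Eig_x(X,G)$. Hence $S\subseteq \Eig_x(X,G)$. By Theorem \ref{the:scalesForSubodometersViaEigx}(i), $\Eig_x(X,G)$ is a filter and in particular upward closed. As $\upperhull{S}$ is the smallest upward closed set containing $S$, the inclusion follows.

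\emph{Inclusion $\Eig_x(X,G)\subseteq \upperhull{S}$ in (i).} This is the main step. Let $\Gamma\in\Eig_x(X,G)$ and let $\pi\colon X\to G/\Gamma$ be a factor map with $\pi(x)=\Gamma$. The set $V:=\pi^{-1}(\Gamma)$ is a clopen neighborhood of $x$, and $G(x,V)=G_0(\Gamma)=\Gamma$. The sets $U_\Lambda:=\phi_\Lambda^{-1}(\Lambda)$ with $\Lambda\in S$ form a neighborhood base of $x$. Indeed, basic product neighborhoods of $x$ only restrict finitely many coordinates. Because $S$ is a scale, there is a single $\Lambda\in S$ contained in all of these finitely many subgroups. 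Compatibility of the coordinates then gives that $U_\Lambda$ lies inside the given basic neighborhood. Choose $\Lambda$ with $U_\Lambda\subseteq V$. Then
\[
\Lambda=G_0(\Lambda)=G(x,U_\Lambda)\subseteq G(x,V)=\Gamma,
\]
where the first equality of hitting times is computed through $\phi_\Lambda$. Hence $\Gamma\in\upperhull{S}$.

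\emph{Part (ii).} By Proposition \ref{pro:conhullOfLocalEigenvalues} and part (i),
\[
\Eig(X,G)=\conhull{\Eig_x(X,G)}=\conhull{(\upperhull{S})}=\eigenhull{S}.
\]
This is exactly the claimed formula.
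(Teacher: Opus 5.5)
Your proof is correct. The outer structure matches the paper's: two inclusions for (i), then (ii) via Proposition \ref{pro:conhullOfLocalEigenvalues}. The difference lies in the substantive inclusion $\Eig_x(X,G)\subseteq\upperhull{S}$.

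The paper cites \cite[Lemma 1.1.16]{ribes2000profinite} to factor the given $\pi\colon X\to G/\Gamma$ as $\pi=\phi\circ\pi_\Lambda$ through a single coordinate $\Lambda\in S$. It then verifies by hand that $\phi$ is equivariant, and concludes by comparing stabilizers, $\Lambda=G_0(\Lambda)\subseteq G_0(\phi(\Lambda))=\Gamma$.

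You instead stay at the point $x$. You check directly that the cylinders $U_\Lambda=\phi_\Lambda^{-1}(\Lambda)$, $\Lambda\in S$, form a neighborhood base at $x$; this is where the scale property enters, just as it does inside the proof of the cited lemma. You then compare hitting times, $\Lambda=G(x,U_\Lambda)\subseteq G(x,\pi^{-1}(\Gamma))=\Gamma$. This is the same device the paper uses in the proof of Theorem \ref{the:scalesForSubodometersViaEigx}(ii).

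Your version is self-contained and needs no equivariance check for an induced map, because $\Eig_x$ depends only on local data at $x$. The paper's version produces the factorization of $\pi$ through a finite stage as part of the argument. Nothing is lost on your side, though: from $\Lambda\subseteq\Gamma$ you get a factor map $\psi\colon G/\Lambda\to G/\Gamma$ with $\psi(\Lambda)=\Gamma$. Then $\psi\circ\phi_\Lambda$ and $\pi$ are equivariant and agree at $x$, so by minimality of $X$ they coincide.
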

\begin{proof}
(i):
    Recall that $X$ is given as the minimal component of $\hat{X}:=\prod_{\Gamma\in S}G/\Gamma$ containing $x$. 
    Denote $\hat{\pi}_\Gamma\colon \hat{X}\to G/\Gamma$ for the respective projections. 
    The restriction $\pi_\Gamma\colon X\to G/\Gamma$ is a factor map with $\pi_\Gamma(x)=\Gamma$. 
    We thus have $\Gamma\in \Eig_x(X,G)$ and observe $S\subseteq \Eig_x(X,G)$. 
    From Theorem \ref{the:scalesForSubodometersViaEigx} we know that $\Eig_x(X,G)$ is a filter. Since filters are upward closed we observe that $\upperhull{S}\subseteq \Eig_x(X,G)$. 
    
    For the converse consider $\Gamma\in \Eig_x(X,G)$ and a factor map $\pi\colon X\to G/\Gamma$ with $\pi(x)=\Gamma$. 
    From \cite[Lemma 1.1.16]{ribes2000profinite} it follows that there exist $\Lambda\in S$ and a continuous surjection $\phi\colon G/\Lambda\to G/\Gamma$ with $\pi= \phi\circ \pi_\Lambda$. 
    To see that $\phi$ inherits the equivariance from $\pi$ and $\pi_\Lambda$ consider $g\in G$ and $y\in G/\Lambda$. 
    For $x'\in \pi_\Lambda^{-1}(y)$ we have
    \begin{align*}
        \phi(g.y)
        =\phi(g.\pi_\Lambda(x'))
        =\phi(\pi_\Lambda(g.x'))
        =\pi(g.x')
        =g.\pi(x')
        =g.\phi(y).
    \end{align*}
    This establishes $\phi$ as a factor map. 
    Furthermore, we have 
    \begin{align*}
        \phi(\Lambda)
        =\phi(\pi_\Lambda(x))
        =\pi(x)
        =\Gamma
    \end{align*}
    and hence 
    $
        \Lambda
        =G_0(\Lambda)
        \subseteq G_0(\phi(\Lambda))
        =G_0(\Gamma)
        =\Gamma.
    $
    This shows $\Gamma\in \upperhull{S}$. 
    
(ii): 
    From (i) and Proposition \ref{pro:conhullOfLocalEigenvalues} we observe 
    \begin{align*}
        \Eig(X,G)
        &=\conhull{\Eig_x(X,G)}
        =\conhull{(\upperhull{S})}
        =\eigenhull{S}.
        \qedhere
    \end{align*}
\end{proof}

If $S$ is a scale and $(X,G)$ a subodometer with $\Eig(X,G)=\eigenhull{S}$, then the $S$-subodometer $(X',G)$ satisfies $\Eig(X',G)=\eigenhull{S}=\Eig(X,G)$ and Theorem \ref{the:eigConjugacyInvariant} yields that $(X,G)$ and $(X',G)$ are conjugated. This shows the following. 

\begin{corollary}
\label{cor:generationAndEigenhull}
    A subodometer $(X,G)$ is generated by a scale $S$ if and only if $\Eig(X,G)=\eigenhull{S}$. 
\end{corollary}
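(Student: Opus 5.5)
The plan is to prove the two implications separately, using Proposition \ref{pro:scaleEigensetTransition}(ii) and Theorem \ref{the:eigConjugacyInvariant}(ii) as the only substantial inputs. Throughout, $S\subseteq \subfin(G)$ is a scale and $(X',G)$ denotes the $S$-subodometer $\varprojlim_{\Gamma\in S}G/\Gamma$. This is a subodometer by the remark following its definition, since it is a minimal component of an equicontinuous action on a Stone space.

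For the forward direction, suppose $(X,G)$ is generated by $S$, that is, there is a conjugacy $X\to X'$. Then $(X,G)$ and $(X',G)$ are mutual factors of each other. Since eigenvalues pass from factors to extensions, they have the same eigenvalue sets, so $\Eig(X,G)=\Eig(X',G)$. Proposition \ref{pro:scaleEigensetTransition}(ii) then gives $\Eig(X',G)=\eigenhull{S}$, and hence $\Eig(X,G)=\eigenhull{S}$.

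For the backward direction, suppose $\Eig(X,G)=\eigenhull{S}$. Proposition \ref{pro:scaleEigensetTransition}(ii) gives $\Eig(X',G)=\eigenhull{S}=\Eig(X,G)$. Both $(X,G)$ and $(X',G)$ are subodometers with equal eigenvalue sets, so Theorem \ref{the:eigConjugacyInvariant}(ii) shows they are conjugated. This means $(X,G)$ is generated by $S$.

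There is no real obstacle here. The only point needing care is that Theorem \ref{the:eigConjugacyInvariant}(ii) requires both actions to be subodometers, so I would state explicitly that $(X',G)$ is one.
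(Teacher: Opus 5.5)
Your proposal is correct and follows the paper's own argument: Proposition \ref{pro:scaleEigensetTransition}(ii) gives $\Eig(X',G)=\eigenhull{S}$ for the $S$-subodometer, and Theorem \ref{the:eigConjugacyInvariant}(ii) turns equality of eigenvalue sets into conjugacy. The paper leaves the forward direction implicit. Your explicit treatment of it, via invariance of $\Eig$ under conjugacy, and your check that $(X',G)$ is a subodometer are the right details to state.
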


\subsection{Equivalent scales}
\label{subsec:eigenvalues_EquivalentScales}
    Combining Theorem \ref{the:eigConjugacyInvariant} and Proposition \ref{pro:scaleEigensetTransition} also yields the following. 

    \begin{corollary}
    \label{cor:equivalentScales}
        Let $S$ and $S'$ be scales that generate $(X,G)$ and $(X',G)$, respectively. 
        \begin{itemize}
            \item[(i)] $(X,G)$ is a factor of $(X',G)$ if and only if $\eigenhull{S}\subseteq \eigenhull{S'}$. 
            \item[(ii)] $(X,G)$ and $(X',G)$ are conjugated if and only if $\eigenhull{S} = \eigenhull{S'}$. 
        \end{itemize}
    \end{corollary}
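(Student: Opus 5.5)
The corollary follows directly from Theorem \ref{the:eigConjugacyInvariant} combined with Proposition \ref{pro:scaleEigensetTransition}(ii). Since $S$ generates $(X,G)$, the action $(X,G)$ is conjugated to the $S$-subodometer. Conjugated minimal actions have the same eigenvalues, so Proposition \ref{pro:scaleEigensetTransition}(ii) gives $\Eig(X,G)=\eigenhull{S}$. By the same argument, $\Eig(X',G)=\eigenhull{S'}$. Equivalently, both identities can be read off from Corollary \ref{cor:generationAndEigenhull}.

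For (i), Proposition \ref{pro:factorsOfSubodometers} and the remark after the definition of $S$-subodometers show that $(X,G)$ is a subodometer, and $(X',G)$ is a minimal action. Theorem \ref{the:eigConjugacyInvariant}(i) then says that $(X,G)$ is a factor of $(X',G)$ if and only if $\Eig(X,G)\subseteq \Eig(X',G)$. Substituting the two identities from the first paragraph turns this into the condition $\eigenhull{S}\subseteq\eigenhull{S'}$.

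For (ii), Theorem \ref{the:eigConjugacyInvariant}(ii) says that two subodometers are conjugated if and only if their eigenvalue sets coincide. After the same substitution, this is exactly the condition $\eigenhull{S}=\eigenhull{S'}$. Alternatively, apply (i) in both directions and use that minimal equicontinuous actions are coalescent.

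No step here is a real obstacle. The only point that needs attention is that $\Eig$ is a conjugacy invariant, which lets us pass from the $S$-subodometer itself to any $(X,G)$ generated by $S$; this is noted in the paper right after the definition of eigenvalues.
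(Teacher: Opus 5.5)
Your proposal is correct and follows the paper's own argument: the paper obtains this corollary by combining Theorem~\ref{the:eigConjugacyInvariant} with Proposition~\ref{pro:scaleEigensetTransition}, which is exactly your route. The small detour through Proposition~\ref{pro:factorsOfSubodometers} is harmless but not needed, since being ``generated by $S$'' already presupposes that $(X,G)$ is a subodometer.
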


    To gain a sufficient condition for $\eigenhull{S}=\eigenhull{S'}$ we introduce the following. 

    \begin{definition}
        Consider subsets $S,S'\subseteq \subfin(G)$. 
        We say that $S'$ \emph{dominates $S$} if for $\Gamma\in S$ there exists $\Gamma'\in S'$ with $\Gamma'\subseteq \Gamma$. 
        We say that $S$ and $S'$ are \emph{equivalent} if $S$ dominates $S'$ and $S'$ dominates $S$. 
    \end{definition}

    \begin{remark}
    \label{rem:equivalentScalesYieldSameSubodometer}
        $S'$ dominates $S$ if and only if $\upperhull{S}\subseteq \upperhull{(S')}$.   
        In particular, if a scale $S'$ dominates a scale $S$, then $\eigenhull{S}\subseteq \eigenhull{S'}$ and hence the $S$-subodometer is a factor of the $S'$-subodometer. 
        
        It also follows that $S$ and $S'$ are equivalent if and only if $\upperhull{S}=\upperhull{(S')}$. Thus for equivalent scales $S$ and $S'$ we have $\eigenhull{S}=\eigenhull{S'}$ and hence the respective subodometers are conjugated. 
    \end{remark}

    Note that equivalence of scales is not necessary for inducing conjugated subodometers as illustrated by the next example. 

    \begin{example}
        Let $G$ be a group and $\Gamma\leq G$ be a non-normal finite index subgroup. Let $g\in G$ with $\Gamma^g\neq \Gamma$. 
        Note that $S:=\{\Gamma\}$ and $S':=\{\Gamma^g\}$ are scales that are not equivalent.
        Nevertheless, we have $\conhull{S}=\conhull{(S')}$ and hence $\eigenhull{S}=\upperhull{(\conhull{S})}=\upperhull{(\conhull{(S')})}=\eigenhull{S}$. Thus, the respective subodometers are conjugated. 
    \end{example}

\section{Metrizable subodometers}
\label{sec:metrizableSubodometers}

\subsection{Finite subodometers}
\label{subsec:metrizableSubodometers_FiniteSubodometers}

In order to characterize metrizable subodometers we first study finite subodometers. 

\begin{lemma}
\label{lem:characterizationFiniteSubodometers}
    A subodometer $(X,G)$ is finite if and only if $\Eig(X,G)$ is finite. 
\end{lemma}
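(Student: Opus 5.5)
Both directions will go through the earlier characterizations, chiefly Theorem~\ref{the:scalesForSubodometersViaEigx} and Proposition~\ref{pro:conhullOfLocalEigenvalues}.

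\emph{Finite $X$ implies finite $\Eig(X,G)$.} Suppose $X$ is finite and fix $x\in X$. By \ref{subsubsec:prelims_actions_FiniteSubodometers}, $\Gamma_0:=G_0(x)\in\subfin(G)$. Take any $\Gamma\in\Eig_x(X,G)$ with a factor map $\pi\colon X\to G/\Gamma$ satisfying $\pi(x)=\Gamma$. Then $\Gamma_0\subseteq G_0(\Gamma)=\Gamma$, so every element of $\Eig_x(X,G)$ is a subgroup of $G$ containing $\Gamma_0$. Such subgroups correspond to subsets of the finite set $G/\Gamma_0$, since each is a union of $\Gamma_0$-cosets. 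Hence $\Eig_x(X,G)$ is finite. By Proposition~\ref{pro:conhullOfLocalEigenvalues}, $\Eig(X,G)=\conhull{\Eig_x(X,G)}$. By Remark~\ref{rem:fundamentalDomainsAndConjugation}, each finite index subgroup has only finitely many conjugates. Therefore $\Eig(X,G)$ is finite.

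\emph{Finite $\Eig(X,G)$ implies finite $X$.} Suppose $\Eig(X,G)$ is finite and fix $x\in X$. Then $S:=\Eig_x(X,G)\subseteq\Eig(X,G)$ is a finite filter by Theorem~\ref{the:scalesForSubodometersViaEigx}(i). A finite scale has a least element: using the scale property repeatedly over the finitely many elements, we get some $\Lambda\in S$ contained in every member of $S$. By Theorem~\ref{the:scalesForSubodometersViaEigx}(ii), $(X,G)$ is conjugate to the $S$-subodometer $\varprojlim_{\Gamma\in S}G/\Gamma$. The projection onto the $\Lambda$-coordinate is injective on this inverse limit, because every other coordinate is determined by $x_\Gamma=\pi^\Lambda_\Gamma(x_\Lambda)$. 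So $X$ embeds into the finite set $G/\Lambda$ and is finite.

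The argument is short. The step I expect to need the most care is showing that the inverse limit over a scale with a minimum collapses onto that coordinate. An alternative route avoids it: $S$ dominates $\{\Lambda\}$ and vice versa, so by Remark~\ref{rem:equivalentScalesYieldSameSubodometer} the $S$-subodometer is conjugate to the $\{\Lambda\}$-subodometer, which is just $G/\Lambda$.
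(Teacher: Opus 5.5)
Your proof is correct and has the same structure as the paper's. One direction uses a generating scale contained in the finite set $\Eig(X,G)$; the paper simply notes that $X$ then sits inside the finite product $\prod_{\Gamma\in S}G/\Gamma$, so your least-element and injectivity step is not needed. The other direction shows $\Eig_x(X,G)$ is finite and then applies Proposition~\ref{pro:conhullOfLocalEigenvalues}, using that each subgroup has only finitely many conjugates. The only real difference is how you bound $\Eig_x(X,G)$: you use the finitely many subgroups containing $G_0(x)$, whereas the paper counts the finitely many $G$-tiles of $X$ via Lemma~\ref{lem:EigxDescriptionGTiles}.
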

\begin{proof}
    We assume w.l.o.g.\ that $(X,G)$ is the $S$-subodometer for some scale $S$. 
    Clearly, whenever $\Eig(X,G)$ is finite, then also $S$ is finite and hence $X$ is finite as a subset of the finite space $\prod_{\Gamma\in S}G/\Gamma$. 
    
    For the converse assume that $X$ is finite and choose $x\in X$.  
    It follows that $X$ only contains finitely many $G$-tiles and Lemma \ref{lem:EigxDescriptionGTiles} implies that $\Eig_x(X,G)$ is finite. 
    Since any finite index subgroup only has finitely many conjugates it follows from Proposition \ref{pro:conhullOfLocalEigenvalues} that $\Eig(X,G)=\conhull{\Eig_x(X,G)}$ is finite.     
\end{proof}

From Proposition \ref{pro:scaleEigensetTransition} and Lemma \ref{lem:characterizationFiniteSubodometers} we observe the following. 

\begin{corollary}
\label{cor:eigenhullFinite}
    For any $\Gamma\in \subfin(G)$ we have that $\eigenhull{\{\Gamma\}}$ is finite. 
\end{corollary}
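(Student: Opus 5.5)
The plan is to realize $\eigenhull{\{\Gamma\}}$ as the eigenvalue set of a finite subodometer and then invoke Lemma \ref{lem:characterizationFiniteSubodometers}. Fix $\Gamma\in \subfin(G)$ and put $S:=\{\Gamma\}$. This is a scale: it is non-empty, and for $\Gamma,\Gamma\in S$ we may take $\Lambda:=\Gamma\subseteq \Gamma\cap\Gamma$.

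Let $(X,G)$ be the $S$-subodometer. By the remark following the definition of the $S$-subodometer, it is a subodometer. Moreover, its phase space is contained in $\prod_{\Lambda\in S} G/\Lambda = G/\Gamma$, which is finite since $\Gamma$ has finite index. Hence $X$ is finite. Concretely, $X$ is conjugated to $(G/\Gamma,G)$ itself.

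Proposition \ref{pro:scaleEigensetTransition}(ii) gives $\Eig(X,G)=\eigenhull{S}=\eigenhull{\{\Gamma\}}$. Since $X$ is finite, Lemma \ref{lem:characterizationFiniteSubodometers} shows that $\Eig(X,G)$ is finite, and therefore so is $\eigenhull{\{\Gamma\}}$.

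None of these steps is a real obstacle; the only point to check is that a singleton is a scale, which is immediate.

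As a sanity check, there is also a direct argument. The set $\eigenhull{\{\Gamma\}}$ consists of all groups containing some conjugate $\Gamma^g$. There are finitely many such conjugates by Remark \ref{rem:fundamentalDomainsAndConjugation}. Each subgroup $\Lambda\supseteq\Gamma^g$ is a union of cosets of $\Gamma^g$, of which there are finitely many. So only finitely many subgroups arise, which confirms the statement.
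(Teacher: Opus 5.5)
Your main argument is the same as the paper's: you realize $\eigenhull{\{\Gamma\}}$ as $\Eig$ of the finite $\{\Gamma\}$-subodometer via Proposition \ref{pro:scaleEigensetTransition} and conclude with Lemma \ref{lem:characterizationFiniteSubodometers}, and there is no circularity because that lemma's proof does not use the corollary. Your coset-counting check is also correct, and it gives a purely algebraic alternative that bypasses the dynamics.
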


\subsection{Chains}
\label{subsec:metrizableSubodometers_Chains}

In the literature on subodometers, such as \cite{cortez2008Godometers} only scales of the form $(\Gamma_n)_{n\in \mathbb{N}}$ in $\subfin(G)$ with $\Gamma_{n+1}\subseteq \Gamma_n$ are considered. 
We will show in Theorem \ref{the:characterizationMetrizableSubodometers} below that the inverse limits along this particular type of scale yield exactly the metrizable subodometers. 
We begin our discussion with the following definition. 

\begin{definition}
    A non-empty subset $S\subseteq \subfin(G)$ is called a \emph{chain} if for any $\Gamma,\Lambda\in S$ we have $\Gamma\subseteq \Lambda$ or $\Lambda\subseteq \Gamma$. 
\end{definition}

Clearly any chain is a scale. Chains are exactly the type of scales considered in the literature as we present next. 

\begin{lemma}
\label{lem:chainsAndSequences}
    Any chain $S$ is countable and can be written in the form $S=\{\Gamma_n;\, n\in \mathbb{N}\}$ with $\Gamma_{n+1}\subseteq \Gamma_n$ for all $n\in \mathbb{N}$.  
\end{lemma}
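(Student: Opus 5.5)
The idea is to use the index map $\Gamma\mapsto [G\colon \Gamma]$ restricted to $S$. By Remark \ref{rem:fundamentalDomainsComposition}, if $\Gamma\subseteq\Lambda$ in $\subfin(G)$ then $[G\colon\Lambda]$ divides $[G\colon\Gamma]$, and equal indices force $\Gamma=\Lambda$. Since $S$ is a chain, any two elements are comparable. Hence two elements of $S$ with the same index must coincide, so the index map is injective on $S$ and takes values in $\mathbb{N}$. This already shows that $S$ is countable.

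For the enumeration, the same comparability argument shows that the index map is an order isomorphism. For $\Gamma,\Lambda\in S$ we have $\Gamma\subseteq\Lambda$ if and only if $[G\colon\Lambda]\le[G\colon\Gamma]$. Indeed, if $\Gamma\subseteq \Lambda$ then the index of $\Lambda$ divides that of $\Gamma$. Conversely, if $\Gamma\not\subseteq\Lambda$, then by the chain property $\Lambda\subsetneq\Gamma$, so $[G\colon\Gamma]<[G\colon\Lambda]$. Thus $(S,\supseteq)$ is order-isomorphic to a subset $N\subseteq\mathbb{N}$ with its usual order.

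Now enumerate $N=\{n_1<n_2<\dots\}$ and let $\Gamma_k$ be the unique element of $S$ of index $n_k$. The monotonicity just shown gives $\Gamma_{k+1}\subseteq\Gamma_k$. If $N$ is infinite, this gives the required sequence indexed by $\mathbb{N}$ with $S=\{\Gamma_n;\, n\in\mathbb{N}\}$. If $N$ is finite with largest element $n_m$ (note $S$ is non-empty by definition), set $\Gamma_n:=\Gamma_m$ for all $n\ge m$. The sequence is then eventually constant, still satisfies $\Gamma_{n+1}\subseteq\Gamma_n$, and still has image exactly $S$.

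There is no real obstacle. The only point needing care is the finite case, and also the convention of whether $\mathbb{N}$ starts at $0$ or $1$, which is harmless. The key step is the observation that proper inclusion in $\subfin(G)$ strictly increases the index, which follows from Remark \ref{rem:fundamentalDomainsComposition}.
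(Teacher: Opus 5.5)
Your proof is correct and follows essentially the same route as the paper: injectivity of $\Gamma\mapsto[G\colon\Gamma]$ on a chain via Remark~\ref{rem:fundamentalDomainsComposition}, then enumeration by increasing index. You additionally spell out that the index map reverses inclusion, which needs the chain property and which the paper summarizes as ``$i$ is monotone'', and you handle the finite case explicitly by making the sequence eventually constant, which the paper dismisses as clear.
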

\begin{proof}
    Let $S$ be a chain and consider $i\colon S\to \mathbb{N}\cup \{0\}$ given by $\Gamma\mapsto [G\colon \Gamma]$. 
    Consider $\Gamma,\Lambda\in S$ with $i(\Gamma)=i(\Lambda)$. 
    Since $S$ is a chain we have $\Gamma\subseteq \Lambda$ or $\Lambda\subseteq \Gamma$. 
    It follows from Remark \ref{rem:fundamentalDomainsComposition} that $\Gamma=\Lambda$. This shows $i$ to be injective and hence $S$ inherits the countability from $\mathbb{N}\cup\{0\}$. 

    Clearly, whenever $S$ is finite, then $S$ can be written in the form $S=\{\Gamma_n;\, n\in \mathbb{N}\}$ with $\Gamma_{n+1}\subseteq \Gamma_n$ for all $n\in \mathbb{N}$. 
    If $S$ is infinite let $(k_n)_{n\in \mathbb{N}}$ be a strictly increasing sequence in $\mathbb{N}$ with $i(S)=\{k_n;\, n\in \mathbb{N}\}$. 
    For $n\in \mathbb{N}$ denote $\Gamma_n$ for the unique preimage of $k_n$ under $i$. 
    Clearly, we have $S=\{\Gamma_n;\, n\in \mathbb{N}\}$. 
    Since $i$ is monotone we observe $\Gamma_{n+1}\subseteq\Gamma_n$.
\end{proof}

The following insight will allow us to show that countable scales and chains give the same type of subodometer. 

\begin{lemma}
\label{lem:countableScaleHasEquivalentChain}
    For any countable scale there exists an equivalent chain.
\end{lemma}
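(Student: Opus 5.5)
Enumerate the countable scale as $S=\{\Lambda_n;\, n\in\mathbb{N}\}$ (if $S$ is finite, repeat elements so that the enumeration is still indexed by $\mathbb{N}$). The plan is to build inductively a decreasing sequence $(\Gamma_n)_{n\in\mathbb{N}}$ of elements of $S$ with $\Gamma_n\subseteq \Lambda_n$ and $\Gamma_{n+1}\subseteq \Gamma_n$. Then $S':=\{\Gamma_n;\, n\in \mathbb{N}\}$ is a chain, and we show it is equivalent to $S$.

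\emph{Construction.} Put $\Gamma_1:=\Lambda_1$. Suppose $\Gamma_n\in S$ has been chosen. Since $S$ is a scale, applying the defining property to $\Gamma_n$ and $\Lambda_{n+1}$ gives some $\Gamma_{n+1}\in S$ with $\Gamma_{n+1}\subseteq \Gamma_n\cap \Lambda_{n+1}$. This yields $\Gamma_{n+1}\subseteq\Gamma_n$ and $\Gamma_{n+1}\subseteq \Lambda_{n+1}$, as required. The set $S'$ is non-empty, contained in $\subfin(G)$, and any two of its elements are comparable because the sequence is decreasing. So $S'$ is a chain.

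\emph{Equivalence.} To see that $S'$ dominates $S$: each $\Lambda\in S$ equals some $\Lambda_n$, and $\Gamma_n\in S'$ satisfies $\Gamma_n\subseteq\Lambda_n$. To see that $S$ dominates $S'$: every $\Gamma_n$ lies in $S$ and $\Gamma_n\subseteq\Gamma_n$. Hence $S$ and $S'$ are equivalent.

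There is no real obstacle. The only points requiring care are the bookkeeping for a finite $S$, which is handled by the repeated enumeration, and the use of a countable (dependent) choice in the recursion, which is harmless.
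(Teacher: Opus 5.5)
Your proof is correct, and it takes a slightly different route from the paper's.

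\textbf{The paper's route.} It enumerates $S=\{\Gamma_n;\,n\in\mathbb{N}\}$ and takes the running intersections $\Lambda_n:=\bigcap_{k=1}^n\Gamma_k$. These form a chain that dominates $S$ trivially, but they need not lie in $S$. The scale property is then used only afterwards: for each $n$ it supplies some $\Gamma_m\subseteq\bigcap_{k=1}^n\Gamma_k$, which gives the reverse domination.

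\textbf{Your route.} You spend the scale property inside the recursion instead. Your chain is therefore a decreasing subsequence of $S$ itself, and the reverse domination comes for free.

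\textbf{What each buys.} Your version gives a slightly stronger conclusion: every countable scale \emph{contains} an equivalent chain. Consequently, any property of the members of $S$ passes to the chain automatically. For example, normality, which is needed in the corollary following Theorem \ref{the:characterizationOdometers}, is inherited directly; the paper instead has to note separately that finite intersections of normal subgroups are normal. The paper's construction, in turn, is explicit and needs no recursion. Your recursion can also be made choice-free: fix the enumeration and take $\Gamma_{n+1}$ to be the $\Lambda_m$ of least index $m$ with $\Lambda_m\subseteq\Gamma_n\cap\Lambda_{n+1}$.
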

\begin{proof}
    Let $S$ be a countable scale and enumerate $S=\{\Gamma_n;\, n\in \mathbb{N}\}$. 
    For $n\in \mathbb{N}$ define $\Lambda_n:=\bigcap_{k=1}^n\Gamma_k$ and note that $S':=\{\Lambda_n;\, n\in \mathbb{N}\}$ is a chain. 
    From $\Lambda_n\subseteq \Gamma_n$ we observe that $S'$ dominates $S$. 
    Furthermore, since $S$ is a scale, for $n\in \mathbb{N}$ we find $m\in \mathbb{N}$ such that $\Gamma_m\subseteq \bigcap_{k=1}^n\Gamma_k=\Lambda_n$. This shows that $S$ dominates $S'$. 
\end{proof}

\subsection{Characterization of metrizable subodometers}
\label{subsec:metrizableSubodometers_CharacterizationOfMetrizableSubodometers}

\begin{theorem}
\label{the:characterizationMetrizableSubodometers}
    Let $(X,G)$ be a subodometer. 
    The following statements are equivalent. 
    \begin{itemize}
        \item[(i)] $X$ is metrizable. 
        \item[(ii)] $(X,G)$ is generated by a chain. 
        \item[(iii)] $(X,G)$ is generated by a countable scale. 
        \item[(iv)] $\Eig(X,G)$ is countable.
        \item[(v)] The topology of $X$ has a countable base of $G$-tiles. 
        \item[(vi)] $\mathbb{U}_X$ has a countable base consisting of invariant equivalence relations. 
    \end{itemize}
\end{theorem}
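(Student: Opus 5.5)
I would prove the equivalences through the cycle (ii)$\Rightarrow$(iii)$\Rightarrow$(iv)$\Rightarrow$(ii), and then attach the topological conditions via (iii)$\Rightarrow$(i)$\Rightarrow$(vi)$\Rightarrow$(v)$\Rightarrow$(iv).

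\emph{The algebraic cycle.} (ii)$\Rightarrow$(iii) is immediate, since by Lemma \ref{lem:chainsAndSequences} every chain is a countable scale. For (iii)$\Rightarrow$(iv), let $(X,G)$ be generated by a countable scale $S$. Corollary \ref{cor:generationAndEigenhull} gives $\Eig(X,G)=\eigenhull{S}=\bigcup_{\Gamma\in S}\eigenhull{\{\Gamma\}}$, and each $\eigenhull{\{\Gamma\}}$ is finite by Corollary \ref{cor:eigenhullFinite}. Hence $\Eig(X,G)$ is a countable union of finite sets. For (iv)$\Rightarrow$(ii), fix $x\in X$. The filter $\Eig_x(X,G)\subseteq\Eig(X,G)$ is countable, and by Theorem \ref{the:scalesForSubodometersViaEigx} it is a scale generating $(X,G)$. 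Lemma \ref{lem:countableScaleHasEquivalentChain} yields an equivalent chain $S'$. By Remark \ref{rem:equivalentScalesYieldSameSubodometer}, $S'$ generates a conjugate subodometer, so $(X,G)$ is generated by a chain.

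\emph{Connecting to topology.} For (iii)$\Rightarrow$(i), $X$ is conjugate to a closed subspace of the countable product $\prod_{\Gamma\in S}G/\Gamma$ of finite discrete spaces, which is metrizable. For (i)$\Rightarrow$(vi), compactness together with metrizability gives a countable base of $\mathbb{U}_X$, for instance the sets $\{d<1/n\}$. By Proposition \ref{pro:characterizationEquicontinuousStone}(ii), each member of that base contains an invariant equivalence relation entourage $\rho_n$. The family $(\rho_n)_n$ is again a base: any $\epsilon\in\mathbb{U}_X$ contains some $\{d<1/n\}$, which contains $\rho_n$. For (vi)$\Rightarrow$(v), given a countable base $(\rho_n)$ of invariant equivalence relations, each $\rho_n$ has only finitely many classes by Lemma \ref{lem:characterizationClosedEquivalenceRelationEntourage}, and every class is a $G$-tile by Lemma \ref{lem:GtilesVSInvariantEquivalenceRelationEntourages}(ii). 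So the family of all classes $B_{\rho_n}(x)$ is countable, and since the sets $B_\epsilon(x)$ for $\epsilon$ in a base of $\mathbb{U}_X$ form neighborhood bases, it is a base of $G$-tiles.

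\emph{The main step, (v)$\Rightarrow$(iv).} Let $\mathcal{B}$ be a countable base of $G$-tiles and fix $x$. By Lemma \ref{lem:EigxDescriptionGTiles}, every $\Gamma\in\Eig_x(X,G)$ equals $G(x,B)$ for some $G$-tile $B\ni x$. The tile $B$ need not lie in $\mathcal{B}$, but some $B'\in\mathcal{B}$ satisfies $x\in B'\subseteq B$, and then $G(x,B')\subseteq\Gamma$. Thus the countable set $\{G(x,B');\,B'\in\mathcal{B},\,x\in B'\}$ dominates $\Eig_x(X,G)$. Each of its members, being $G(x,B')$ for a $G$-tile, is a finite index subgroup, and each has only finitely many overgroups, because $\eigenhull{\{\Lambda\}}$ is finite. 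Therefore $\Eig_x(X,G)$ is countable, and $\Eig(X,G)=\conhull{\Eig_x(X,G)}$ is countable because every subgroup has finitely many conjugates.

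The main obstacle is this domination step, which turns a countable topological base into countability of eigenvalues. Its key ingredient is the finiteness of $\eigenhull{\{\Gamma\}}$, which controls how many finite index subgroups can lie above a fixed one.
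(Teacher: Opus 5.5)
Your proof is correct and uses essentially the same ingredients as the paper's: countable bases of $\mathbb{U}_X$ refined via Proposition \ref{pro:characterizationEquicontinuousStone}, domination of $\Eig_x(X,G)$ by the hitting-time groups $G(x,B)$ of a countable tile base, Lemma \ref{lem:countableScaleHasEquivalentChain}, and finiteness of $\eigenhull{\{\Gamma\}}$, arranged in a different cycle of implications. The deviations are minor: you go from (v) directly to (iv) by counting the finitely many overgroups of each $G(x,B')$, whereas the paper uses the same domination to show that $\{G(x,B);\,B\in\mathcal{B}_x\}$ is a countable scale equivalent to $\Eig_x(X,G)$; and you obtain (v) from (vi) via equivalence classes rather than from (i), which makes explicit the paper's tacit step of extracting a countable base of $G$-tiles from a second-countable topology.
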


For the proof of Theorem \ref{the:characterizationMetrizableSubodometers} we need the following Lemma. 

\begin{lemma}
\label{lem:eigenhullInheritsCountability}
    For any countable subset $S\subseteq \subfin(G)$ also $\eigenhull{S}$ is countable. 
\end{lemma}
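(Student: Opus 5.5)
The plan is to write $\eigenhull{S}$ as a countable union of finite sets. By definition,
\[
\eigenhull{S} \;=\; \bigcup_{\Gamma\in S} \eigenhull{\{\Gamma\}}.
\]
Indeed, $\Lambda\in\eigenhull{S}$ exactly when there are $g\in G$ and $\Gamma\in S$ with $\Gamma\subseteq \Lambda^g$, and this is exactly the condition $\Lambda\in\eigenhull{\{\Gamma\}}$. Both inclusions follow immediately from the definition of the upper conjugation hull. I will verify this set identity explicitly as the first step.

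Next, I will invoke Corollary \ref{cor:eigenhullFinite}, which says that $\eigenhull{\{\Gamma\}}$ is finite for every $\Gamma\in\subfin(G)$. Since $S$ is countable, $\eigenhull{S}$ is then a countable union of finite sets, and hence countable. If $S$ is empty, the union is empty, which is trivially countable.

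There is no real obstacle here; the only point to check is the union identity. For the reader's benefit, I may also give a direct argument for the finiteness of $\eigenhull{\{\Gamma\}}$ that avoids the dynamical route through Lemma \ref{lem:characterizationFiniteSubodometers}. By Remark \ref{rem:fundamentalDomainsAndConjugation}, $\Gamma$ has only finitely many conjugates. Each $\Lambda\in\eigenhull{\{\Gamma\}}$ satisfies $\Gamma^{g^{-1}}\subseteq\Lambda$ for some $g$. Moreover, the subgroups containing a fixed finite index subgroup $\Gamma'$ are unions of cosets of $\Gamma'$, of which there are finitely many, so there are only finitely many such subgroups. Either route suffices, so I will simply cite Corollary \ref{cor:eigenhullFinite}.
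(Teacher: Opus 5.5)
Your proposal is correct and follows essentially the same route as the paper: you write $\eigenhull{S}=\bigcup_{\Gamma\in S}\eigenhull{\{\Gamma\}}$ and use Corollary \ref{cor:eigenhullFinite} to exhibit $\eigenhull{S}$ as a countable union of finite sets. Your optional direct argument for the finiteness of $\eigenhull{\{\Gamma\}}$ is also valid: $\Gamma$ has finitely many conjugates, and each finite index subgroup lies in only finitely many subgroups, since these are unions of its cosets. It avoids the dynamical detour through Lemma \ref{lem:characterizationFiniteSubodometers}.
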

\begin{proof}
    For $\Gamma\in S$ we know from Corollary \ref{cor:eigenhullFinite} that $\eigenhull{\{\Gamma\}}$ is finite. 
    Furthermore, it follows from the definition of the upper conjugation hull that $\eigenhull{S}=\bigcup_{\Gamma\in S}\eigenhull{\{\Gamma\}}$. 
    We thus observe $\eigenhull{S}$ to be a countable union of finite sets and hence to be countable. 
\end{proof}
\begin{proof}[Proof of Theorem \ref{the:characterizationMetrizableSubodometers}:]
Note that a compact Hausdorff space is metrizable if and only if its uniformity allows for a countable base \cite[Theorem 6.13]{kelley2017general}. Thus, the equivalence of (i) and (vi) can be observed from Proposition \ref{pro:characterizationEquicontinuousStone}. 

(i)$\Rightarrow$(v):
    If $X$ is metrizable it allows for a countable base $\mathcal{B}$ for its topology. It follows from Theorem \ref{the:characterizationGTiles} that $\mathcal{B}$ can be chosen to consist of $G$-tiles.  

(v)$\Rightarrow$(iii):
    Let $x\in X$ and $\mathcal{B}_x$ be a countable neighborhood base that consists of $G$-tiles. 
    Denote $S:=\{G(x,B);\, B\in \mathcal{B}_x\}$. 
    Clearly, $S$ is countable. 
    From Lemma \ref{lem:EigxDescriptionGTiles} we know $S\subseteq \Eig_x(X,G)\subseteq \subfin(G)$.        
    Furthermore, for $B,B'\in \mathcal{B}_x$ we have that $B\cap B'$ is an open neighborhood of $x$ and hence there exists $B''\in \mathcal{B}_x$ with $B''\subseteq B\cap B'$. 
    It follows that $G(x,B'')\subseteq G(x,B)\cap G(x,B')$ and we observe that $S$ is a countable scale.
    It remains to show that $S$ generates $(X,G)$. 
    For this we show that $S$ and $\Eig_x(X,G)$ are equivalent.

    We already know that $S\subseteq \Eig_x(X,G)$ and it remains to show that $S$ dominates $\Eig_x(X,G)$. 
    From Lemma \ref{lem:EigxDescriptionGTiles} it follows that for any $\Gamma\in \Eig_x(X,G)$ there exists a $G$-tile $A$ with $x\in A$ and $\Gamma= G(x,A)$. 
    Since $\mathcal{B}_x$ is a neighborhood base and $G$-tiles are open we find $B\in \mathcal{B}_x$ with $B\subseteq A$. 
    From $S\ni G(x,B)\subseteq G(x,A)=\Gamma$ we observe that $S$ dominates $\Eig_x(X,G)$.    

(iii)$\Rightarrow$(i): 
    Let $S$ be a countable scale that generates $(X,G)$. 
    As a subspace of a countable product of metrizable spaces, we observe the $S$-subodometer to be metrizable. 
    Thus also the conjugated subodometer $(X,G)$ is metrizable. 

This establishes the equivalence of (i), (iii), (v) and (vi). 

(iii)$\Rightarrow$(ii):
    If $S$ is a countable scale that generates $(X,G)$, then by Lemma \ref{lem:countableScaleHasEquivalentChain} there exists an equivalent chain $S'$. It follows from Remark \ref{rem:equivalentScalesYieldSameSubodometer} that $S'$ also generates $(X,G)$. 

(ii)$\Rightarrow$(iv):
    If $S$ is a countable chain that generates $(X,G)$, then Corollary \ref{cor:generationAndEigenhull} and Lemma \ref{lem:eigenhullInheritsCountability} yield that 
    $\Eig(X,G)=\eigenhull{S}$ is countable. 

(iv)$\Rightarrow$(iii): 
    If $\Eig(X,G)$ is countable, then any scale that generates $(X,G)$ is contained in $\Eig(X,G)$ and hence countable. 
\end{proof}

\begin{corollary}
\label{cor:allSubodometersMetrizable}
    Let $G$ be a group. 
    All subodometers $(X,G)$ are metrizable if and only if $\subfin(G)$ is countable. 
\end{corollary}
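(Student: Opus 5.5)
The corollary splits into two implications, and I will treat each using Theorem \ref{the:characterizationMetrizableSubodometers}.

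\emph{If $\subfin(G)$ is countable, every subodometer is metrizable.} Let $(X,G)$ be a subodometer. By definition $\Eig(X,G)\subseteq \subfin(G)$, so $\Eig(X,G)$ is countable. The implication (iv)$\Rightarrow$(i) of Theorem \ref{the:characterizationMetrizableSubodometers} then shows that $X$ is metrizable.

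\emph{If every subodometer is metrizable, $\subfin(G)$ is countable.} I argue by contraposition and build a non-metrizable subodometer from the set $S:=\subfin(G)$ itself.

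1. $S$ is a scale. It contains $G$, so it is non-empty. For $\Gamma,\Gamma'\in S$ we have $[G\colon \Gamma\cap\Gamma']\le[G\colon\Gamma][G\colon\Gamma']$, so $\Gamma\cap\Gamma'\in S$; hence $S$ is intersection closed, and in particular a scale.

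2. Let $(X,G)$ be the $S$-subodometer. By Proposition \ref{pro:scaleEigensetTransition}(ii), $\Eig(X,G)=\eigenhull{S}$.

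3. $\eigenhull{S}=\subfin(G)$. Indeed, $\eigenhull{S}\subseteq\subfin(G)$ by definition, and $S\subseteq\eigenhull{S}$.

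4. Therefore, if $\subfin(G)$ is uncountable, $\Eig(X,G)$ is uncountable. By the implication (i)$\Rightarrow$(iv) of Theorem \ref{the:characterizationMetrizableSubodometers}, $X$ is not metrizable.

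Neither direction has a real obstacle. The only step needing a check is that $\subfin(G)$ is a scale, which follows from the standard bound on the index of an intersection of two finite index subgroups.
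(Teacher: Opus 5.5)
Your proposal is correct and follows the paper's proof essentially verbatim. Both directions use the equivalence (i)$\Leftrightarrow$(iv) of Theorem~\ref{the:characterizationMetrizableSubodometers}: one via $\Eig(X,G)\subseteq\subfin(G)$, the other via the $\subfin(G)$-subodometer, whose eigenset is $\eigenhull{\subfin(G)}=\subfin(G)$. The only difference is that you verify explicitly, via the index bound, that $\subfin(G)$ is a scale, which the paper simply asserts.
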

\begin{proof}
    Note that $S:=\subfin(G)$ is a filter. 
    The $S$-subodometer $(X,G)$ satisfies $\Eig(X,G)=\eigenhull{S}=\subfin(G)$. 
    If $(X,G)$ is metrizable, then Theorem \ref{the:characterizationMetrizableSubodometers} yields that $\subfin(G)$ is countable. 

    For the converse assume that $\subfin(G)$ is countable. 
    For any subodometer $(X,G)$ we have that $\Eig(X,G)\subseteq \subfin(G)$ is countable and Theorem \ref{the:characterizationMetrizableSubodometers} yields that $(X,G)$ is metrizable. 
\end{proof}

It is natural to ask for which groups $\subfin(G)$ is countable. We next present some examples and counterexamples.

\begin{example}
    For a finitely generated group $G$ any finite index subgroup is also finitely generated \cite[Corollary 7.2.1]{hall1959theory} and hence $\subfin(G)$ is countable. 
    In particular, $\subfin(\mathbb{Z}^d)$ is countable for $d\in \mathbb{N}$. 
\end{example}

\begin{example}
    The Abelian group $(\mathbb{Q},+)$ has only itself as a finite index subgroup, i.e.\ $\subfin(\mathbb{Q})=\{\mathbb{Q}\}$. 
    Thus, all (sub)odometers $(X,\mathbb{Q})$ are trivial. 
\end{example}

\begin{example}
    For an uncountable product $G=\prod_i G_i$ of finite non-trivial groups $\subfin(G)$ is uncountable, since it contains all $\Gamma_i:=\prod_j G_{j,i}$ with $G_{i,i}=\{e_{G_i}\}$ and $G_{j,i}:=G_j$ for $i\neq j$.
\end{example} 

\begin{example}
    The free group $F_\infty$ with countably many generators is countable, while $\subfin(F_\infty)$ is uncountable. For details see Example \ref{exa:finfty} below. 
\end{example}

\section{Odometers}
\label{sec:odometers}
Note that a totally disconnected compact Hausdorff group is profinite \cite{ribes2000profinite}. 
Thus, odometers are group rotations on profinite groups. 
The reader might wonder whether the theory of odometers is not just a reminiscent of the theory of profinite groups. 
The following examples illustrate that this is not the case. 

\begin{example}
\label{exa:finfty}
    Let $F_\infty$ be the free group generated by countably infinitely many generators $(g_n)_{n\in \mathbb{N}}$ and note that $F_\infty$ is a countable group. 
    Consider the finite group $X:=\mathbb{Z}/2\mathbb{Z}$ and denote $I:=X^\mathbb{N}\setminus \{(0)_{n\in \mathbb{N}}\}$. 
    For a sequence $h=(h_n)_{n\in \mathbb{N}}\in I$ we consider the group homomorphism $\phi_h\colon F_\infty\to X$ given by $\phi_h(g_n)=h_n$ for all $n\in \mathbb{N}$ and denote $\alpha_h$ for the respective group rotation.
    Note that $\phi_h$ is surjective and hence that $\alpha_h$ is a finite odometer. 
    Furthermore, w.r.t.\ this action $G_0(x)=\ker(\phi_h)$ holds for all $x\in X$. 
    Thus, in order to show that for distinct $h,h'\in I$ the odometers $\alpha_h$ and $\alpha_{h'}$ are not conjugated it suffices to show that $\ker(\phi_h)\neq\ker(\phi_{h'})$. 

    For $h,h'\in I$ with $h\neq h'$ there exists $n\in \mathbb{N}$ with $h_n\neq h_n'$ and we observe that $g_n$ is contained in exactly one of the normal sets $\ker(\phi_h)$ and $\ker(\phi_{h'})$. 
    Thus, indeed $\ker(\phi_h)\neq \ker(\phi_{h'})$ and we observe that $\alpha_h$ and $\alpha_{h'}$ are not conjugated.
    Since $I$ is uncountable this shows that there exist uncountably many non-conjugated odometers $(X,F_\infty)$ with $|X|=2$. 
    In contrast, up to group-isomorphy there exists only one group with two elements. 
\end{example}

\begin{example}\label{ex:amenable_non_amenable} There exist finitely generated groups $G$ and $H$, with $G$ amenable and $H$ non-amenable, whose profinite completions are isomorphic \cite{kionke2023amenability}. Consequently, if $X_1$ and $X_2$ denote the profinite completions of $G$ and $H$, respectively, then $X_1$ and $X_2$ are isomorphic as topological groups \cite[Theorem~1.1]{nikolov2007finitely}. Nevertheless, the odometers $(X_1,G)$ and $(X_2,H)$ are not isomorphic as dynamical systems, nor are they even continuously orbit equivalent (see \cite{cortez2016orbit}).
 \end{example}

We next aim to characterize the odometers by properties of their sets of eigenvalues and properties of the scales that generate them. 
For this we introduce the following notions. 

\subsection{Normality and core-stability}
\label{subsec:odometers_NormalityAndCoreStability}

\begin{definition}
    A subset $S\subseteq \subfin(G)$ is called
    \begin{itemize}
        \item \emph{normal} if all $\Gamma\in S$ are normal. 
        \item \emph{core-stable} if for $\Gamma\in S$ also the core $\Gamma_G$ is contained in $S$. 
    \end{itemize}
\end{definition}

It is natural to ask how these notions relate to conjugation invariance. 
We summarize the relations in the following proposition and omit the straightforward proofs. 

\begin{proposition}
\label{pro:coreStableNormalConIinvInterplay}
    \begin{itemize}
        \item[(i)] Any normal scale is core-stable and conjugation invariant. 
        \item[(ii)] A chain is normal if and only if it is conjugation invariant. Any conjugation invariant chain is core-stable. 
        \item[(iii)] A filter is core-stable if and only if it is conjugation invariant. 
    \end{itemize}
\end{proposition}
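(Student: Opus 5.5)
We verify the three items directly from the definitions. Throughout we use two facts about cores: if $\Gamma$ is normal then $\Gamma_G=\Gamma$, and $\Gamma_G$ is always normal with $\Gamma_G\subseteq\Gamma^g$ for every $g\in G$. We also use the index observation from Remark \ref{rem:fundamentalDomainsComposition}: if two finite index subgroups, one contained in the other, have the same index, then they are equal.

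\textbf{Item (i).} Let $S$ be a normal scale and $\Gamma\in S$. Then $\Gamma_G=\Gamma\in S$, so $S$ is core-stable. Likewise $\Gamma^g=\Gamma\in S$ for every $g\in G$, so $S$ is conjugation invariant. Neither conclusion uses the scale property.

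\textbf{Item (ii).} A normal chain is conjugation invariant by (i). For the converse, let $S$ be a conjugation invariant chain, $\Gamma\in S$ and $g\in G$. Then $\Gamma^g\in S$, so $\Gamma^g\subseteq\Gamma$ or $\Gamma\subseteq\Gamma^g$. Since conjugation preserves the index, $[G\colon\Gamma^g]=[G\colon\Gamma]$, and the index observation gives $\Gamma^g=\Gamma$. Hence $S$ is normal. Core-stability of a conjugation invariant chain then follows from (i).

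\textbf{Item (iii).} Let $S$ be a filter. Suppose $S$ is conjugation invariant and $\Gamma\in S$. By Remark \ref{rem:fundamentalDomainsAndConjugation} there are only finitely many conjugates $\Gamma^g$, and all of them lie in $S$. A filter is intersection closed, so their intersection $\Gamma_G$ lies in $S$. Conversely, suppose $S$ is core-stable and $\Gamma\in S$, $g\in G$. Then $\Gamma_G\in S$ and $\Gamma_G\subseteq\Gamma^g$. Since $S$ is upward closed and $\Gamma^g\in\subfin(G)$, we get $\Gamma^g\in S$.

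The only step that needs any care is the equal-index argument in (ii). It relies on conjugation preserving the index, which holds because $h\Gamma\mapsto gh\Gamma g^{-1}=(gh g^{-1})\Gamma^g$ is a bijection $G/\Gamma\to G/\Gamma^g$. Everything else is immediate from the definitions.
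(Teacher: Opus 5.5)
Your proof is correct and is the direct verification the paper intends; the paper omits it as straightforward, and your argument for (iii) (finitely many conjugates lie in the filter, so their intersection $\Gamma_G$ does too, and conversely upward closedness from $\Gamma_G\subseteq\Gamma^g$) is the same reasoning the paper uses in the proof of Lemma \ref{lem:filterHullProperties}(iv). The equal-index argument in (ii) via Remark \ref{rem:fundamentalDomainsComposition} is sound, as is your justification that conjugation preserves the index.
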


\begin{example}
    Let $G$ be a group that allows for non-normal finite index subgroups $\Lambda \leq \Gamma\leq G$ with $\Lambda\neq \Gamma$.
    \begin{itemize}
        \item[(i)]
        The chain $S:=\{\Gamma, \Gamma_G\}$ is core-stable but not normal.
        \item[(ii)]
        The filter $S:=\upperhull{\{\Gamma_G\}}$ is conjugation invariant, i.e.\ core-stable. It contains $\Gamma$ and hence is not normal. 
        \item[(iii)] 
        The scale $S:=\conhull{\{\Gamma,\Lambda, \Lambda_G\}}$ is conjugation invariant but not core-stable.  
    \end{itemize}    
\end{example}

\subsection{Characterization of odometers}
\label{subsec:odometers_CharacterizationOfOdometers}

\begin{theorem}
\label{the:characterizationOdometers}
    Let $(X,G)$ be a subodometer. 
    The following statements are equivalent. 
    \begin{itemize}
        \item[(i)] $(X,G)$ is an odometer.         
        \item[(ii)] $(X,G)$ is generated by a normal scale. 
        \item[(iii)] $(X,G)$ is generated by a conjugation invariant scale. 
        \item[(iv)] $(X,G)$ is generated by a core-stable scale. 
        \item[(v)] $\Eig(X,G)$ is a filter. 
        \item[(vi)] $\Eig(X,G)$ is core-stable. 
        \item[(vii)] For some $x\in X$ we have $\Eig(X,G)=\Eig_x(X,G)$. 
        \item[(viii)] For all $x\in X$ we have $\Eig(X,G)=\Eig_x(X,G)$. 
    \end{itemize}
\end{theorem}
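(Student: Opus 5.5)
I would organise the proof as a cycle through the algebraic conditions, with the dynamical conditions (i), (vii), (viii) attached via regularity. The guiding fact is Proposition \ref{pro:conhullOfLocalEigenvalues}: $\Eig(X,G)=\conhull{\Eig_x(X,G)}$, where $\Eig_x(X,G)$ is a filter by Theorem \ref{the:scalesForSubodometersViaEigx}.

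\emph{The algebraic conditions.} I would first close the chain (ii)$\Rightarrow$(iii)$\Rightarrow$(iv)$\Rightarrow$(vi)$\Rightarrow$(v)$\Rightarrow$(vii)$\Rightarrow$(viii)$\Rightarrow$(ii).
\begin{itemize}
\item (ii)$\Rightarrow$(iii) is trivial.
\item For (iii)$\Rightarrow$(iv), let $S$ be a conjugation invariant scale generating $(X,G)$. For $\Gamma\in S$ the conjugates $\Gamma^g$ are finitely many by Remark \ref{rem:fundamentalDomainsAndConjugation}, and all lie in $S$. Iterating the scale property gives $\Lambda\in S$ with $\Lambda\subseteq\Gamma_G$. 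Then $S':=S\cup\{\Gamma_G;\,\Gamma\in S\}$ is equivalent to $S$, still a scale, and core-stable; by Remark \ref{rem:equivalentScalesYieldSameSubodometer} it generates $(X,G)$. Cores of cores are cores, so $S'$ really is core-stable.
\item For (iv)$\Rightarrow$(vi), let $S$ be core-stable with $\Eig=\eigenhull{S}$ (Corollary \ref{cor:generationAndEigenhull}). If $\Lambda\subseteq\Gamma^g$ with $\Lambda\in S$, then $\Lambda_G\in S$ and $\Lambda_G\subseteq \Gamma_G$, so $\Gamma_G\in\upperhull{S}\subseteq \Eig$.
\item For (vi)$\Rightarrow$(v), recall that $\Eig$ is upward closed. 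For $\Gamma,\Gamma'\in\Eig$, the subgroup $\Gamma_G\cap\Gamma'_G$ is normal. I need it to lie in $\Eig$.
\end{itemize}

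\emph{The main obstacle: (vi)$\Rightarrow$(v).} By Proposition \ref{pro:conhullOfLocalEigenvalues}, $\Gamma_G$ and $\Gamma'_G$ are both conjugates of elements of $\Eig_x$. Since they are normal, they equal those elements, so they lie in $\Eig_x$. As $\Eig_x$ is a filter, $\Gamma_G\cap\Gamma'_G\in\Eig_x\subseteq\Eig$, and upward closure gives $\Gamma\cap\Gamma'\in\Eig$. This argument also shows that every element of $\Eig$ contains a member of the normal set $N:=\{\Gamma_G;\,\Gamma\in\Eig\}\subseteq \Eig_x$.

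\emph{Closing the algebraic chain.}
\begin{itemize}
\item For (v)$\Rightarrow$(vii), with $\Eig$ a filter: for $\Gamma\in\Eig$ we have $\Gamma_G\in\Eig$, as a finite intersection of conjugates, each in $\Eig$ by conjugation invariance. By the observation above, $\Gamma_G\in\Eig_x$ for every $x$. Hence $\Gamma\in\Eig_x$ by upward closure, which gives (viii), and so (vii).
\item (vii)$\Rightarrow$(viii): for $y=g.x$ we have $\Eig_y=\{\Gamma^g;\,\Gamma\in\Eig_x\}$, by composing with the conjugacy $G/\Gamma\to G/\Gamma^g$. Since $\Eig$ is conjugation invariant, this gives $\Eig_y=\Eig$ on the orbit. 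For general $y$, I would first establish (vii)$\Rightarrow$(v), since $\Eig_x$ is a filter, and then rerun the previous step.
\item (viii)$\Rightarrow$(ii): take $S=N$, the normal subgroups in $\Eig$. It is a scale because $\Eig=\Eig_x$ is a filter. Every $\Gamma\in\Eig$ contains $\Gamma_G\in N$, so $\eigenhull{N}=\Eig$, and Corollary \ref{cor:generationAndEigenhull} applies.
\end{itemize}

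\emph{Connecting (i).} For (ii)$\Rightarrow$(i), let $S$ be a normal scale. Each $G/\Gamma$ with $\Gamma\in S$ is a finite group, and the maps $\pi^\Gamma_\Lambda$ are homomorphisms. So $\varprojlim G/\Gamma$ is a profinite group, and $G$ acts through the homomorphism $g\mapsto (g\Gamma)_\Gamma$ by left translation. This is a minimal rotation on a Stone space, i.e.\ an odometer.

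For (i)$\Rightarrow$(viii) I would use regularity of rotations. For $x,y\in X$ there is a conjugacy $\iota\colon X\to X$ with $\iota(y)=x$. Given $\Gamma\in\Eig_x$ with a factor map $\pi$ satisfying $\pi(x)=\Gamma$, the map $\pi\circ\iota$ is a factor map sending $y$ to $\Gamma$. Hence $\Eig_x=\Eig_y$ for all $x,y$. Then $\Eig=\conhull{\Eig_x}$ equals $\bigcup_y\Eig_y=\Eig_x$, using $\Eig_{g.x}=(\Eig_x)^g$ as above.
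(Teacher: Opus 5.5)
Your proposal is correct. The dynamical links coincide with the paper's: (ii)$\Rightarrow$(i) via the inverse limit of finite group rotations, (i)$\Rightarrow$(viii) via regularity, and (vii)$\Rightarrow$(v) because $\Eig_x(X,G)$ is a filter. The algebraic part, however, is organised differently.

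The paper proves (v)$\Rightarrow$(vi) through Proposition~\ref{pro:coreStableNormalConIinvInterplay}(iii). It then proves (vi)$\Rightarrow$(ii) by replacing an arbitrary generating scale $S$ with its cores $\{\Gamma_G;\,\Gamma\in S\}$. It closes with (ii)$\Rightarrow$(iv)$\Rightarrow$(iii)$\Rightarrow$(v): a core-stable $S$ makes $\upperhull{S}$ conjugation invariant, and then $\Eig(X,G)=\upperhull{S}$ is visibly a filter.

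You instead pivot on one observation. Since $\Eig(X,G)=\conhull{\Eig_x(X,G)}$, every normal eigenvalue lies in $\Eig_x(X,G)$ for every $x$. This single fact drives your (vi)$\Rightarrow$(v) and (v)$\Rightarrow$(viii). In fact it shows directly that core-stability of $\Eig(X,G)$ already forces $\Eig(X,G)=\Eig_x(X,G)$ for all $x$. Your (viii)$\Rightarrow$(ii) then uses the canonical normal scale of all normal eigenvalues.

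Your route buys economy and a transparent reason why the conditions coincide, and it proves inline the part of Proposition~\ref{pro:coreStableNormalConIinvInterplay} it needs, whose proof the paper omits. The paper's route stays at the level of an arbitrary generating scale. It produces the explicit normal generating scale $\{\Gamma_G;\,\Gamma\in S\}$, the same construction that reappears for the enveloping odometer in Proposition~\ref{pro:envelopingOdometerScale}.

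Two minor remarks. Your orbit argument in (vii)$\Rightarrow$(viii) is superfluous, since (vii)$\Rightarrow$(v)$\Rightarrow$(viii) already suffices. Also, the step you label (v)$\Rightarrow$(vii) actually proves (v)$\Rightarrow$(viii).
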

\begin{proof}
(i)$\Rightarrow$(viii):
    Since odometers are group rotations they are regular. 
    Thus, $\Eig_x(X,G)$ is independent of $x\in X$ and we observe 
    \[\Eig(X,G)=\bigcup_{x'\in X}\Eig_{x'}(X,G)=\Eig_x(X,G)\] for all $x\in X$. 

(viii)$\Rightarrow$(vii): 
    Trivial. 

(vii)$\Rightarrow$(v): 
    Assume that there exists $x\in X$ with $\Eig(X,G)=\Eig_x(X,G)$. 
    It follows from Theorem \ref{the:scalesForSubodometersViaEigx} that $\Eig(X,G)$ is a filter. 

(v)$\Rightarrow$(vi): 
    Assume that $\Eig(X,G)$ is a filter and recall from Proposition \ref{pro:conhullOfLocalEigenvalues} that $\Eig(X,G)$ is conjugation invariant. It follows from Proposition \ref{pro:coreStableNormalConIinvInterplay} that $\Eig(X,G)$ is core-stable. 

(vi)$\Rightarrow$(ii): 
    Let $S$ be a scale that generates $(X,G)$. 
    Consider \[S':=\{\Gamma_G;\, \Gamma\in S\}\] and note that $S'$ consists of normal subgroups of $G$. 
    It follows from the monotonicity of $\Gamma\mapsto \Gamma_G$ that $S'$ is a normal scale. 
    Clearly, $S'$ dominates $S$ and hence $\Eig(X,G)=\eigenhull{S}\subseteq \eigenhull{S'}$. 
    Furthermore, the core-stability of $\Eig(X,G)$ implies $S'\subseteq \Eig(X,G)$. 
    Since $S'$ is conjugation invariant and $\Eig(X,G)$ is upward closed we have $\eigenhull{S'}=\upperhull{(S')}\subseteq \Eig(X,G)$.
    This shows $\Eig(X,G)=\eigenhull{S'}$ and hence $S'$ generates $(X,G)$.  

(ii)$\Rightarrow$(i):
    If $S$ is a normal scale that generates $(X,G)$, then the $S$-subodometer is an inverse limit of group rotations and hence a group rotation. We thus observe $(X,G)$ to be an odometer.

This shows that (i), (ii), (v), (vi), (vii) and (viii) are equivalent.

(ii)$\Rightarrow$(iv):
    Trivial.

(iv)$\Rightarrow$(iii):  
    Let $S$ be a core-stable scale that generates $(X,G)$. 
    Since $S$ is a scale $\upperhull{S}$ is a filter. 
    For $\Gamma\in \upperhull{S}$ and $g\in G$ we find $\Lambda\in S$ with $\Lambda\subseteq \Gamma$ and since $S$ is core-stable we have 
    $S\ni\Lambda_G\subseteq \Gamma^g$. 
    It follows that $\Gamma^g\in \upperhull{S}$. 
    This shows $\upperhull{S}$ to be conjugation invariant. 
    Since $S$ generates $(X,G)$ we have $\Eig(X,G)=\eigenhull{S}=\conhull{(\upperhull{S})}=\eigenhull{\upperhull{S}}$. 
    Thus, the conjugation invariant scale $\upperhull{S}$ generates $(X,G)$.     

(iii)$\Rightarrow$(v):
    Let $S$ be a conjugation invariant scale that generates $(X,G)$. 
    Note that $S=\conhull{S}$. 
    Thus, from Proposition \ref{pro:scaleEigensetTransition} we observe that 
    $\Eig(X,G)=\eigenhull{S}=\upperhull{(\conhull{S})}=\upperhull{S}$. 
    Since $S$ is a scale $\upperhull{S}=\Eig(X,G)$ is a filter. 
\end{proof}

\begin{corollary}
    For a subodometer $(X,G)$ the following statements are equivalent. 
    \begin{itemize}
        \item[(i)] $(X,G)$ is a metrizable odometer. 
        \item[(ii)] $(X,G)$ is generated by a normal chain. 
        \item[(iii)] $\Eig(X,G)$ is a countable filter. 
    \end{itemize}
\end{corollary}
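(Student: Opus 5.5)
The plan is to combine Theorem \ref{the:characterizationOdometers} with Theorem \ref{the:characterizationMetrizableSubodometers}, using Lemma \ref{lem:countableScaleHasEquivalentChain} to upgrade countable normal scales to normal chains. I will prove (i)$\Rightarrow$(iii)$\Rightarrow$(ii)$\Rightarrow$(i).

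(i)$\Rightarrow$(iii): If $(X,G)$ is a metrizable odometer, then Theorem \ref{the:characterizationOdometers} (i)$\Rightarrow$(v) shows that $\Eig(X,G)$ is a filter. Theorem \ref{the:characterizationMetrizableSubodometers} (i)$\Rightarrow$(iv) shows that it is countable.

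(iii)$\Rightarrow$(ii): Assume $\Eig(X,G)$ is a countable filter.
\begin{itemize}
\item By Theorem \ref{the:characterizationOdometers} (v)$\Rightarrow$(i), $(X,G)$ is an odometer.
\item The proof of (vi)$\Rightarrow$(ii) there produces a generating normal scale $S'=\{\Gamma_G;\,\Gamma\in S\}$, where $S$ is any generating scale. Filters are conjugation invariant and upward closed, hence core-stable by Proposition \ref{pro:coreStableNormalConIinvInterplay}, so that proof applies. We may take $S=\Eig(X,G)$, which generates $(X,G)$ by Corollary \ref{cor:generationAndEigenhull}, since $\eigenhull{\Eig(X,G)}=\Eig(X,G)$.
\item Then $S'\subseteq\Eig(X,G)$ is a countable normal scale.
\item Apply the construction of Lemma \ref{lem:countableScaleHasEquivalentChain}: enumerate $S'=\{N_n;\,n\in\mathbb{N}\}$ and set $\Lambda_n:=\bigcap_{k\le n}N_k$. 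This yields an equivalent chain whose members are finite intersections of normal subgroups, hence normal.
\item By Remark \ref{rem:equivalentScalesYieldSameSubodometer}, equivalent scales generate conjugated subodometers, so this normal chain generates $(X,G)$.
\end{itemize}

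(ii)$\Rightarrow$(i): A normal chain is a normal scale, so Theorem \ref{the:characterizationOdometers} (ii)$\Rightarrow$(i) shows that $(X,G)$ is an odometer. A chain is countable by Lemma \ref{lem:chainsAndSequences}, so Theorem \ref{the:characterizationMetrizableSubodometers} (ii)$\Rightarrow$(i) gives metrizability.

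There is no real obstacle. The only point needing care is that the chain from Lemma \ref{lem:countableScaleHasEquivalentChain} stays normal, which holds because intersections of normal subgroups are normal.
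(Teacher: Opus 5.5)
Your proof is correct and follows essentially the paper's route. The two characterization theorems give (ii)$\Rightarrow$(i)$\Rightarrow$(iii). For (iii)$\Rightarrow$(ii), a generating normal scale, which is countable because it lies in $\Eig(X,G)$, is turned into a normal chain by the intersection construction of Lemma~\ref{lem:countableScaleHasEquivalentChain}.

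One small wording slip: filters are not conjugation invariant in general. The fact you actually need is that $\Eig(X,G)$ is conjugation invariant (Proposition~\ref{pro:conhullOfLocalEigenvalues}). Alternatively, you could simply cite Theorem~\ref{the:characterizationOdometers} (v)$\Rightarrow$(ii) directly.
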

\begin{proof}
    From combining the Theorems \ref{the:characterizationMetrizableSubodometers} and \ref{the:characterizationOdometers} it follows that (ii) implies (i) and that (i) implies (iii). 

    To show that (iii) implies (ii) assume that $\Eig(X,G)$ is a countable filter. 
    It follows from Theorem \ref{the:characterizationOdometers} that 
    $(X,G)$ allows for a normal scale $S$.
    From $S\subseteq \Eig(X,G)$ we observe $S$ to be countable.     
    Note that the finite intersection of normal subgroups is normal. 
    Thus, the construction presented in the proof of Lemma \ref{lem:countableScaleHasEquivalentChain} yields an equivalent normal chain $S'$ that also generates $(X,G)$. 
\end{proof}

\begin{remark}
    Clearly, whenever $G$ is Abelian, then any subgroup of $G$ is normal and hence any subodometer is an odometer. 
    Note that non-Abelian groups for which all subgroups are normal do exist and are called \emph{Hamiltonian groups}. A characterization of Hamiltonian groups can be found in \cite[Theorem 12.5.6]{hall1959theory}. 
\end{remark}

\section{Eigensets}
\label{sec:eigensets}

We have seen in Theorem \ref{the:eigConjugacyInvariant} that subodometers $(X,G)$ are (up to conjugacy) completely characterized by $\Eig(X,G)$.
Furthermore, in Proposition \ref{pro:scaleEigensetTransition} we have seen that for a scale $S$ and the $S$-odometer $(X,G)$ we have $\Eig(X,G)=\eigenhull{S}$. 
This motivates the following purely algebraic definitions, which allow for a simplification of the study of the category of subodometers and in particular simple proofs for the Theorems \ref{the:INTROOdometerLattice}, \ref{the:INTROmodularLattice}, and 
\ref{the:INTROmetrizabilitySupremum}. 

\begin{definition}
    A subset $E\subseteq \subfin(G)$ is called an \emph{eigenset} if there exists a scale $S\subseteq \subfin(G)$ such that $E=\eigenhull{S}$. 
    A scale $S$ is said to \emph{generate $E$} if $E=\eigenhull{S}$. 
    An eigenset is called \emph{filtered} if it is a filter. 
    We denote $\allEigensets{G}$ and $\allFilteredEigensets{G}$ for the set of all eigensets and all filtered eigensets, respectively.
    We equip $\allEigensets{G}$ and $\allFilteredEigensets{G}$ with the partial ordering given by set inclusion. 
\end{definition}

\begin{remark}
\label{rem:eigensetsViaMinimalActions}
    For any minimal action $(X,G)$, the set $\Eig(X,G)$ is an eigenset. 
    Indeed, it follows from Theorem \ref{the:scalesForSubodometersViaEigx} that $\Eig_x(X,G)$ is a filter, and Proposition \ref{pro:conhullOfLocalEigenvalues} yields
    $\Eig(X,G)=\conhull{\Eig_x(X,G)}=\eigenhull{\Eig_x(X,G)}$. 
\end{remark}

It follows from the discussion in Section \ref{sec:eigenvalues} that for each eigenset $E$ there exists a unique subodometer (up to conjugacy) with $E=\Eig(X,G)$. We refer to this subodometer as the \emph{subodometer associated with $E$}.     
For $E,E'\in\allEigensets{G}$ we have $E\subseteq E'$ if and only if the subodometer associated with $E$ is a factor of the subodometer associated with $E'$ (Theorem \ref{the:eigConjugacyInvariant}). 
Furthermore, an eigenset is generated by a scale $S$ if and only if the associated subodometer is generated by $S$ (Corollary \ref{cor:generationAndEigenhull}). 
An eigenset is filtered if and only if it is associated to an odometer (Theorem \ref{the:characterizationOdometers}). 
An eigenset is countable if and only if the associated subodometer is metrizable (Theorem \ref{the:characterizationOdometers}).  

\begin{remark}
    A subset $E\subseteq \subfin(G)$ is a filtered eigenset if and only if it is a conjugation invariant filter. Thus the odometers (up to conjugacy) can be identified with conjugation invariant filters. 
\end{remark}

Note that $\allFilteredEigensets{G}$ and $\allEigensets{G}$ have $\subfin(G)$ as a maximal and $\{G\}$ as a minimal element. We will next study further properties of these partially ordered sets in order to gain insights into the categories of odometers and subodometers, respectively.

\subsection{$\allFilteredEigensets{G}$ as a complete lattice}
\label{subsec:eigensets_FilteredEigensetsAsACompleteLattice}
Next, we show that $\allFilteredEigensets{G}$ is a complete lattice.
We start with the following, which establishes that any non-empty family of odometers allows for a maximal common factor.  

\begin{proposition}
\label{pro:intersectionFiltersAndFilteredEigensets}
\begin{itemize}
    \item[(i)] 
    For any family $\mathcal{S}$ of filters also $\bigcap_{S\in \mathcal{S}} S$ is a filter. 
    \item[(ii)]
    For any family $\mathcal{E}$ of filtered eigensets also $\bigwedge\mathcal{E}:=\bigcap_{E\in \mathcal{E}} E$ is a filtered eigenset. 
\end{itemize}    
\end{proposition}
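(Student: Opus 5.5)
For (i), I would check the three defining properties of a filter directly for $T:=\bigcap_{S\in\mathcal{S}} S$. One caveat concerns the empty family. In that case the intersection is naturally read inside $\subfin(G)$ and equals $\subfin(G)$ itself, which is a filter. So I will assume $\mathcal{S}\neq\emptyset$.

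Non-emptiness comes first. Each filter $S$ is non-empty and upward closed, so it contains $G$. Hence $G\in T$.

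Upward closedness is immediate. If $\Gamma\in T$ and $\Gamma\subseteq \Lambda\in\subfin(G)$, then $\Lambda\in S$ for every $S\in\mathcal{S}$, and so $\Lambda\in T$.

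For the scale property I would use that filters are intersection closed, as noted in the remark after the definition of scales. The argument runs as follows. Let $\Gamma,\Gamma'\in S$. There is some $\Lambda\in S$ with $\Lambda\subseteq\Gamma\cap\Gamma'$. Moreover $\Gamma\cap\Gamma'\in\subfin(G)$, since it contains $\Lambda$; equivalently, by the index computation in Remark \ref{rem:fundamentalDomainsComposition}. Upward closedness then gives $\Gamma\cap\Gamma'\in S$. Consequently, for $\Gamma,\Gamma'\in T$ we get $\Gamma\cap\Gamma'\in S$ for every $S$, hence $\Gamma\cap\Gamma'\in T$, and this group witnesses the scale property.

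For (ii), I would use the characterization stated just above: a subset of $\subfin(G)$ is a filtered eigenset if and only if it is a conjugation invariant filter. This follows from Theorem \ref{the:characterizationOdometers} together with Remark \ref{rem:upperConjugationHullProperties}, since a conjugation invariant filter $F$ satisfies $\eigenhull{F}=F$ and is therefore generated by the scale $F$. Conversely, every eigenset is conjugation invariant.

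By (i), $\bigcap_{E\in\mathcal{E}}E$ is a filter. It is conjugation invariant because an intersection of conjugation invariant sets is conjugation invariant, as in the remark on conjugation invariant subsets. Hence it is a filtered eigenset. For $\mathcal{E}=\emptyset$ the convention again gives $\subfin(G)$, which is a filtered eigenset.

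I do not expect a real obstacle here. The only points needing care are the empty-family convention and confirming that $\Gamma\cap\Gamma'$ has finite index, so that upward closedness applies to it.
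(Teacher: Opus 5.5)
Your proof is correct and follows the paper's argument. In (i) you check non-emptiness, upward closedness and the scale property of the intersection, and in (ii) you add conjugation invariance so that $\eigenhull{\bigwedge\mathcal{E}}=\bigwedge\mathcal{E}$. The one difference is the witness for the scale property: you take $\Gamma\cap\Gamma'$ itself, using that filters are intersection closed, whereas the paper takes $\grouphull{\bigcup_{S\in\mathcal{S}}\Lambda_S}$. Your choice is simpler and remains valid for the empty family you treat explicitly, where the paper's generated subgroup would be trivial and in general not of finite index.
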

\begin{proof}
(i): 
    $\check{S}:=\bigcap_{S\in \mathcal{S}}S$ is non-empty, since any filter contains $G$. 
    Furthermore, as the intersection of upward closed sets we have that $\check{S}$ is upward closed. 
    To show that $\check{S}$ is a scale, consider $\Gamma,\Gamma'\in \check{S}$. 
    For $S\in \mathcal{S}$ we have $\Gamma,\Gamma'\in S$ and hence there exists $\Lambda_S\in S$ with $\Lambda_S\subseteq \Gamma\cap \Gamma'$. 
    Consider $\Lambda:=\grouphull{\bigcup_{S\in \mathcal{S}} \Lambda_S}$ and note that $\Lambda\leq G$ is of finite index. 
    For $S\in \mathcal{S}$ we have $\Lambda_S\subseteq \Lambda$ and hence the upward closedness of $S$ yields that $\Lambda\in S$. 
    We thus observe that $\Lambda\in \check{S}$. 
    Furthermore, we have $\Lambda=\grouphull{\bigcup_{S\in \mathcal{S}}\Lambda_S}\subseteq \Gamma \cap \Gamma'$.
    This shows $\check{S}$ to be an upward closed scale, i.e.\ a filter. 

(ii): 
    From (i) we observe that $\bigwedge \mathcal{E}=\bigcap_{E\in \mathcal{E}}E$ is a filter. 
    Furthermore, as the intersection of conjugation invariant sets it is conjugation invariant. 
    We thus know that $\bigwedge \mathcal{E}$ is a conjugation invariant and upward closed scale and observe $\bigwedge \mathcal{E}=\eigenhull{\bigwedge \mathcal{E}}$ to be an eigenset.   
\end{proof}

\begin{corollary}
\label{cor:filteredEigensetsFormCompleteLattice}
    $\allFilteredEigensets{G}$ is a complete lattice. 
\end{corollary}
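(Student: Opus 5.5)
The plan is to obtain the complete lattice structure from the closure of filtered eigensets under arbitrary intersections (Proposition \ref{pro:intersectionFiltersAndFilteredEigensets}(ii)), via the standard order-theoretic fact: a partially ordered set in which every subset has an infimum is a complete lattice.

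\emph{Infima.} Let $\mathcal{E}\subseteq \allFilteredEigensets{G}$. If $\mathcal{E}$ is non-empty, then $\bigcap_{E\in\mathcal{E}}E$ is a filtered eigenset by Proposition \ref{pro:intersectionFiltersAndFilteredEigensets}(ii). It is contained in every $E\in\mathcal{E}$. It is also the largest set with this property, so it is the infimum with respect to inclusion. For $\mathcal{E}=\emptyset$, I will use the convention that the empty intersection is taken inside $\subfin(G)$. This gives $\subfin(G)$, which is the maximal element of $\allFilteredEigensets{G}$ noted above, namely the conjugation invariant filter $\subfin(G)=\eigenhull{\subfin(G)}$.

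\emph{Suprema.} For $\mathcal{E}\subseteq\allFilteredEigensets{G}$, let $U$ be the set of all upper bounds, i.e.\ all $E'\in\allFilteredEigensets{G}$ with $E\subseteq E'$ for every $E\in\mathcal{E}$. The set $U$ is non-empty because $\subfin(G)\in U$. By the infimum step, $\bigvee\mathcal{E}:=\bigcap_{E'\in U}E'$ is a filtered eigenset. It contains each $E\in\mathcal{E}$, since every $E'\in U$ does, so it is an upper bound. It is contained in every upper bound, so it is the least one. For $\mathcal{E}=\emptyset$ this gives the minimal element $\{G\}$.

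There is no real obstacle, because the substantive work is already done in Proposition \ref{pro:intersectionFiltersAndFilteredEigensets}. The one point to check is the degenerate case $\mathcal{E}=\emptyset$ in the infimum step, where the intersection must be read as $\subfin(G)$. That set is indeed a filtered eigenset, since it is a conjugation invariant filter.
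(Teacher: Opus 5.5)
Your proposal is correct and follows the paper's proof: infima are intersections by Proposition \ref{pro:intersectionFiltersAndFilteredEigensets}(ii), and the supremum is the intersection of all filtered eigensets containing $\bigcup_{E\in\mathcal{E}}E$. Your separate treatment of $\mathcal{E}=\emptyset$, reading the empty intersection as $\subfin(G)$ and obtaining $\{G\}$ as the empty supremum, adds care that the paper leaves implicit. That care is needed, because the proof of that proposition uses non-emptiness of the family.
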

\begin{proof}
    From Proposition \ref{pro:intersectionFiltersAndFilteredEigensets} we observe that any family in $\allFilteredEigensets{G}$ allows for an infimum. 
    For $\mathcal{E}\subseteq \allFilteredEigensets{G}$ we consider the family $\mathcal{E}'$ of all filtered eigensets that contain $\bigcup_{E\in \mathcal{E}}E$. Clearly, $\bigwedge \mathcal{E}'$ is the supremum of $\mathcal{E}$. 
\end{proof}

\subsection{$\allEigensets{G}$ as a partially ordered set}
\label{subsec:eigensets_EigensetsAsAPartiallyOrderedSet}
Considering Corollary \ref{cor:filteredEigensetsFormCompleteLattice} it is natural to ask whether also $\allEigensets{G}$ is a complete lattice. This is not necessarily the case as illustrated by the next example. Note that this example also yields that for pairs of subodometers there does not need to exist a minimal common extension/maximal common factor. 

\begin{example}
\label{exa:EigensetsNoLattice}
    Let $S_5$ be the group of all permutations on $\{1,2,3,4,5\}$. 
    Denote 
    \[\Lambda_1:=\grouphull{(12)(34)}
    \hspace{0.3cm}\text{and}\hspace{0.3cm}
    \Lambda_2:=\grouphull{(123)}.\]
    Furthermore, consider
    \[\Gamma_1:=\grouphull{(123), (23)(45)}
    \hspace{0.3cm}\text{and}\hspace{0.3cm}
    \Gamma_2:=\grouphull{(123), (12)(34), (13)(24)}.\]
    We will next show that 
    $\eigenhull{\{\Lambda_1\}}$ and $\eigenhull{\{\Lambda_2\}}$
    do not have a supremum and that 
    $\eigenhull{\{\Gamma_1\}}$ and $\eigenhull{\{\Gamma_2\}}$
    do not have an infimum in $\allEigensets{S_5}$. 

    Clearly we have $\Lambda_1\subseteq \Gamma_2$ and $\Lambda_2\subseteq \Gamma_1\cap \Gamma_2$. 
    Furthermore, for $g=(54321)$
    we observe that $\Lambda_1^g=\grouphull{(23)(45)}$ and hence that $\Lambda_1^g\subseteq \Gamma_1$.
    Visualizing the set inclusions of the eigensets via arrows, we thus observe the following diagram. 
    \[
    \begin{tikzcd}[row sep=1.5em, column sep=3em]
    \eigenhull{\{\Lambda_1\}} \arrow[d] \arrow[dr] & \eigenhull{\{\Lambda_2\}} \arrow[d] \arrow[dl] \\
    \eigenhull{\{\Gamma_1\}} & \eigenhull{\{\Gamma_2\}}
    \end{tikzcd}
    \]
    Conjugates of $\Lambda_1$ and $\Lambda_2$ are cyclic groups of order $2$ and $3$, respectively. 
    Since $2$ and $3$ are coprime we observe that $\Lambda_1\not \subseteq \Lambda_2^g$ and 
    $\Lambda_2\not \subseteq \Lambda_1^g$ for all $g\in S_5$. 
    It follows that 
    $\eigenhull{\{\Lambda_1\}}\not \subseteq \eigenhull{\{\Lambda_2\}}$ and 
    $\eigenhull{\{\Lambda_2\}}\not \subseteq \eigenhull{\{\Lambda_1\}}$. 

    All $g\in \Gamma_2$ fix $5$. 
    Thus, if $\Gamma_1$ would be contained in some conjugate of $\Gamma_2$ then there would be $l\in \{1,\dots, 5\}$ that is fixed by all permutations in $\Gamma_1$ and in particular by $(123)$ and $(23)(45)$, a contradiction. 
    Hence, we have $\Gamma_1\not \subseteq \Gamma_2^g$ for all $g\in S_5$. 
    Furthermore, we have $|\Gamma_1|=6$ and $|\Gamma_2|=12$. 
    Thus $\Gamma_2\not \subseteq \Gamma_1^g$ for all $g\in S_5$. 
    It follows that 
    $\eigenhull{\{\Gamma_1\}}\not \subseteq \eigenhull{\{\Gamma_2\}}$
    and that 
    $\eigenhull{\{\Gamma_2\}}\not \subseteq \eigenhull{\{\Gamma_1\}}$. 
    Similarly, we observe that $\eigenhull{\{\Lambda_i\}}\not \subseteq \eigenhull{\{\Gamma_j\}}$ for $i,j\in \{1,2\}$. 
    Summarizing we have shown that the diagram above displays all set inclusions between the considered eigensets.
    
    To observe that $\{\eigenhull{\{\Lambda_1\}},\eigenhull{\{\Lambda_2\}}\}$ has no infimum and that $\{\eigenhull{\{\Gamma_1\}},\eigenhull{\{\Gamma_2\}}\}$ has no supremum in $\allEigensets{S_5}$ we show that there does not exist an intermediate eigenset $E$ with 
    $\eigenhull{\{\Gamma_i\}}\subseteq E\subseteq \eigenhull{\{\Lambda_j\}}$ for $i,j\in \{1,2\}$.    
    To obtain a contradiction, assume that there exists such $E$. 
    Since $S_5$ is finite, any scale that generates $E$ has a minimal element and we thus find $\Gamma\in \subfin(S_5)$ with $E=\eigenhull{\{\Gamma\}}$. 
    
    Since $\Gamma\in E\subseteq \eigenhull{\{\Lambda_1\}}\cap \eigenhull{\{\Lambda_2\}}$ we observe the existence of $g_1,g_2\in S_5$ with $\Lambda_1^{g_1}\cup \Lambda_2^{g_2}\subseteq \Gamma$. 
    Since $\Lambda_1^{g_1}$ and $\Lambda_2^{g_2}$ are cyclic groups of order $2$ and $3$, respectively, we observe that $|\Gamma|\geq 6$. 
    Furthermore, from $\eigenhull{\{\Gamma_1\}}\cup \eigenhull{\{\Gamma_2\}}\subseteq E=\eigenhull{\{\Gamma\}}$ we observe
    $\Gamma_1,\Gamma_2\in \eigenhull{\{\Gamma\}}$. 
    This implies the existence of $h_1,h_2\in S_5$ such that 
    $\Gamma\subseteq \Gamma_1^{h_1}\cap \Gamma_2^{h_2}$. 
    In particular, denoting $h:=h_2h_1^{-1}$ we have $\Gamma^{h_1^{-1}}\subseteq \Gamma_1\cap \Gamma_2^h$. 
    Recall that we have already observed that $\Gamma_1\not \subseteq \Gamma_2^g$ for all $g\in S_5$. 
    We thus have $\Gamma_1\not \subseteq \Gamma_2^h$ and hence 
    $|\Gamma|=|\Gamma^{h_1^{-1}}|\leq |\Gamma_1\cap \Gamma_2^h|<|\Gamma_1|=6$, a contradiction. 
    This shows that an intermediate $E$ as considered cannot exist. 
\end{example}

\subsection{Filter Hulls}
\label{subsec:eigensets_FilterHulls}
From Corollary \ref{cor:filteredEigensetsFormCompleteLattice} we know that any family of odometers allows for a minimal common extension within the category of odometers. 
In order to show that this minimal common extension is actually also the minimal common extension within the category of all subodometers we next develop an explicit formula for the supremum of filtered eigensets. For this we will use the notion of a filter hull. 
Recall from Proposition \ref{pro:intersectionFiltersAndFilteredEigensets} that the intersection of a family of filters is a filter. This allows us to define the following. 

\begin{definition}
    For a subset $S\subseteq \subfin(G)$ we denote $\filterhull{S}$ for the \emph{filter hull of $S$}, i.e.\ the smallest filter that contains $S$. 
\end{definition}

\begin{remark}
    Since any filter is upward closed we have $S\subseteq \upperhull{S}\subseteq \filterhull{S}$. 
    If $S$ is a scale $\upperhull{S}$ is a filter and hence $\filterhull{S}=\upperhull{S}$. 
\end{remark}

The following lemma gives an explicit formula for the filter hull. 
Note that for any finite subset $J\subseteq \subfin(G)$ we have 
$\bigcap_{\Gamma\in J}\Gamma\in \subfin(G)$. 

\begin{lemma}
\label{lem:formulaFilterHull}
    For a subset $S\subseteq \subfin(G)$ we have 
    \[\filterhull{S}=\upperhull{\left\{\bigcap_{\Gamma\in J} \Gamma;\, J\subseteq S \text{ finite}\right\}}.\]
\end{lemma}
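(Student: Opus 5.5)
Denote by $T:=\left\{\bigcap_{\Gamma\in J}\Gamma;\, J\subseteq S \text{ finite}\right\}$ the set of finite intersections. The plan is to prove the two inclusions $\upperhull{T}\subseteq\filterhull{S}$ and $\filterhull{S}\subseteq\upperhull{T}$. Throughout we use the convention that the empty intersection equals $G$, so $G\in T$ and $T$ is non-empty even when $S=\emptyset$. For finite $J$ the intersection $\bigcap_{\Gamma\in J}\Gamma$ has finite index, as noted just before the lemma, so $T\subseteq\subfin(G)$.

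For the inclusion $\upperhull{T}\subseteq\filterhull{S}$, recall from the earlier remark that any filter is intersection closed. It also contains $G$ by upward closedness, since it is non-empty. An induction on $|J|$ therefore shows that every filter containing $S$ contains $T$. Applying this to the filter $\filterhull{S}$ gives $T\subseteq\filterhull{S}$. Since $\filterhull{S}$ is upward closed, it then contains $\upperhull{T}$.

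For the inclusion $\filterhull{S}\subseteq\upperhull{T}$, it suffices to show that $\upperhull{T}$ is a filter containing $S$, because $\filterhull{S}$ is the smallest such filter. Every singleton $J=\{\Gamma\}$ gives $S\subseteq T\subseteq\upperhull{T}$. The set $T$ is intersection closed, since the union of two finite subsets of $S$ is finite and intersecting over it gives the intersection of the two elements. Hence $T$ is a non-empty intersection closed subset of $\subfin(G)$, so it is a scale. By Remark \ref{rem:upperHullScaleIsFilter}, $\upperhull{T}$ is then a filter.

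There is no real obstacle in this argument. The only points needing care are the edge case $S=\emptyset$, where both sides equal $\upperhull{\{G\}}=\{G\}$, and making sure that $T$ lies in $\subfin(G)$.
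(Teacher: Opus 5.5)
Your proof is correct and follows essentially the same two-inclusion argument as the paper. You spell out why the upper hull of the finite intersections is a filter containing $S$, which the paper calls straightforward. For the other inclusion you use that filters are intersection closed, where the paper uses the scale property together with upward closedness; these amount to the same thing, and your empty-intersection convention correctly covers the case $S=\emptyset$.
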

\begin{proof}
    Denote $S':=\upperhull{\left\{\bigcap_{\Gamma\in J} \Gamma;\, J\subseteq S \text{ finite}\right\}}$. 
    It is straightforward to verify that $S'$ is a filter. 
    From $S\subseteq S'$ we thus observe $\filterhull{S}\subseteq S'$. 

    For the converse consider $\Gamma\in S'$. 
    There exists a finite subset $J\subseteq S$ with $\bigcap_{\Gamma'\in J} \Gamma'\subseteq \Gamma$. 
    For any filter $\hat{S}$ that contains $S$ we observe $J\subseteq S\subseteq \hat{S}$. 
    Since $\hat{S}$ is a scale and $J$ is finite there exists $\Lambda\in \hat{S}$ with $\Lambda\subseteq \bigcap_{\Gamma'\in J}\Gamma'\subseteq \Gamma$.
    Since $\hat{S}$ is upward closed we observe $\Lambda\in \hat{S}$. 
    This shows that $\Lambda$ is contained in all filters $\hat{S}$ that contain $S$ and hence that $\Lambda\in \filterhull{S}$. 
\end{proof}

\begin{lemma}
\label{lem:filterHullProperties}
    Let $S\subseteq \subfin(G)$ be a subset. 
    \begin{itemize}
        \item[(i)] If $S$ is finite, then $\filterhull{S}$ is finite. 
        \item[(ii)] If $S$ is countable, then $\filterhull{S}$ is countable. 
        \item[(iii)] If $S$ is conjugation invariant, then $\filterhull{S}$ is a filtered eigenset. 
        \item[(iv)] If $S$ is a scale and $E$ is a filtered eigenset with $E\subseteq \eigenhull{S}$, then $E\subseteq \filterhull{S}$. 
    \end{itemize}
\end{lemma}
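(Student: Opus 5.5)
The approach is to work directly from the explicit formula in Lemma \ref{lem:formulaFilterHull}, writing
\[
\filterhull{S}=\upperhull{T},\qquad T:=\Bigl\{\bigcap_{\Gamma\in J}\Gamma;\, J\subseteq S \text{ finite}\Bigr\}.
\]
I will use throughout that $\upperhull{T}=\bigcup_{\Lambda\in T}\upperhull{\{\Lambda\}}$. For every $\Lambda\in\subfin(G)$ the set $\upperhull{\{\Lambda\}}\subseteq\eigenhull{\{\Lambda\}}$ is finite by Corollary \ref{cor:eigenhullFinite}. (Alternatively, the subgroups of $G$ containing $\Lambda$ correspond to unions of $\Lambda$-cosets, of which there are finitely many.)

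\textbf{Parts (i) and (ii).} If $S$ is finite, then $T$ is finite, so $\filterhull{S}$ is a finite union of finite sets. If $S$ is countable, then $S$ has only countably many finite subsets, so $T$ is countable and $\filterhull{S}$ is a countable union of finite sets.

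\textbf{Part (iii).} First I show that $T$ is conjugation invariant: $\bigl(\bigcap_{\Gamma\in J}\Gamma\bigr)^g=\bigcap_{\Gamma\in J}\Gamma^g$, and $\{\Gamma^g;\,\Gamma\in J\}$ is again a finite subset of $S$. By Remark \ref{rem:upperConjugationHullProperties}, $\upperhull{T}=\filterhull{S}$ is then conjugation invariant. Since $\filterhull{S}$ is a filter, it is a scale with $\eigenhull{\filterhull{S}}=\conhull{(\upperhull{(\filterhull{S})})}=\filterhull{S}$. Hence it is an eigenset, and being a filter, it is filtered. (The remark after Corollary \ref{cor:generationAndEigenhull} already says that conjugation invariant filters are exactly the filtered eigensets.)

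\textbf{Part (iv).} This is the main point. Let $\Gamma\in E\subseteq\eigenhull{S}$. The obstacle is that membership in $\eigenhull{S}$ only gives some $\Lambda\in S$ and $g\in G$ with $\Lambda\subseteq\Gamma^g$, i.e.\ $\Lambda^{g^{-1}}\subseteq\Gamma$, and $\Lambda^{g^{-1}}$ need not lie in $S$. To get around this, I use that $E$ is a filtered eigenset, hence conjugation invariant and a filter, hence core-stable by Proposition \ref{pro:coreStableNormalConIinvInterplay}. So $\Gamma_G\in E\subseteq\eigenhull{S}$, and there are $\Lambda\in S$ and $g\in G$ with $\Lambda\subseteq(\Gamma_G)^g=\Gamma_G\subseteq\Gamma$, using normality of the core. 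Therefore $\Gamma\in\upperhull{S}\subseteq\filterhull{S}$. Note that this step does not actually need $S$ to be a scale.
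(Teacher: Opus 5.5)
Your proposal is correct and follows the paper's proof essentially step by step: (i)/(ii) via the formula of Lemma \ref{lem:formulaFilterHull} and the finiteness of $\upperhull{\{\Lambda\}}$ from Corollary \ref{cor:eigenhullFinite}, (iii) via conjugation invariance of the finite intersections, and (iv) via $\Gamma_G\in E\subseteq\eigenhull{S}$ and normality of the core. The only minor differences are that you cite Proposition \ref{pro:coreStableNormalConIinvInterplay} for core-stability where the paper derives $\Gamma_G\in E$ inline, and that you use $\upperhull{S}\subseteq\filterhull{S}$ instead of the paper's equality $\upperhull{S}=\filterhull{S}$; this correctly shows that the scale hypothesis in (iv) is not needed.
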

\begin{proof}
(i/ii): 
    For $J\subseteq S$ finite we have $\bigcap_{\Gamma\in J}\Gamma\in \subfin(G)$. Note that $\upperhull{\{\bigcap_{\Gamma\in J}\Gamma\}}$ is finite by Corollary \ref{cor:eigenhullFinite}.  
    Whenever $S$ is finite/countable we thus observe that
    \[\filterhull{S}=\upperhull{\left\{\bigcap_{\Gamma\in J}\Gamma;\, J\subseteq S \text{~finite}\right\}}
    =\bigcup_{J\subseteq S \text{~finite}}
    \upperhull{\left\{\bigcap_{\Gamma\in J}\Gamma\right\}}
    \]
    is a finite/countable union of finite sets and hence finite/countable. 

(iii):     
    Assume that $S$ is conjugation invariant. 
    For $J\subseteq S$ finite and $g\in G$ we have $J':=\{\Gamma^g;\, \Gamma\in J\}\subseteq S$ and that 
    $(\bigcap_{\Gamma\in J}\Gamma)^g=\bigcap_{\Gamma\in J}\Gamma^g=\bigcap_{\Gamma\in J'}\Gamma$. 
    This shows $\{\bigcap_{\Gamma\in J}\Gamma;\, J\subseteq S \text{ finite}\}$ to be conjugation invariant. 
    Since upper hulls of conjugation invariant sets are conjugation invariant we observe from Lemma \ref{lem:formulaFilterHull} that $\filterhull{S}$ is conjugation invariant.

    Since $\filterhull{S}$ is a conjugation invariant and upward closed scale we have $\filterhull{S}=\eigenhull{\filterhull{S}}$ and hence $\filterhull{S}$ is an eigenset. 

(iv): 
    Let $\Gamma\in E$ and note that $\Gamma$ only has finitely many conjugates. 
    Since $E$ is a conjugation invariant scale we find $\Gamma'\in E$ with $\Gamma'\subseteq \bigcap_{g\in G}\Gamma^g=\Gamma_G$. 
    Since $E$ is upward closed we observe $\Gamma_G\in E\subseteq \eigenhull{S}$.
    Thus there exist $g\in G$ and $\Lambda\in S$ with $\Lambda\subseteq \Gamma_G^g=\Gamma_G\subseteq \Gamma$. 
    Hence, $\Gamma\in \upperhull{S}=\filterhull{S}$. 
\end{proof}

\subsection{$\allFilteredEigensets{G}$ as a complete sublattice of $\allEigensets{G}$}
\label{subsec:eigensets_FilteredEigensetsAsACompleteSublatticeOfEigensets}

We next present an explicit formula for the supremum of filtered eigensets. 

\begin{proposition}
\label{pro:explicitSupremum}
    Let $\mathcal{E}$ be a family of filtered eigensets and denote 
     \[\bigvee \mathcal{E}:= \filterhull{\left(\bigcup_{E\in\mathcal{E}}E\right)}.\]
    \begin{itemize}
        \item[(i)] $\bigvee\mathcal{E}$ is a filtered eigenset. 
        \item[(ii)] All eigensets $\hat{E}$ that contain $\bigcup_{E\in \mathcal{E}}E$ also contain $\bigvee \mathcal{E}$. 
    \end{itemize}
\end{proposition}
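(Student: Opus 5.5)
For (i), set $U:=\bigcup_{E\in\mathcal{E}}E$. Each filtered eigenset is conjugation invariant: it equals $\Eig$ of the associated odometer, and Proposition \ref{pro:conhullOfLocalEigenvalues} shows that such sets are conjugation invariant. A union of conjugation invariant sets is conjugation invariant, so $U$ is conjugation invariant. Lemma \ref{lem:filterHullProperties}(iii) then shows that $\filterhull{U}=\bigvee\mathcal{E}$ is a filtered eigenset. If $\mathcal{E}=\emptyset$, then $U=\emptyset$ and $\filterhull{\emptyset}=\{G\}$, which is still a filtered eigenset, so this case causes no trouble.

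For (ii), let $\hat{E}$ be an arbitrary eigenset containing $U$, and choose a scale $S$ with $\hat{E}=\eigenhull{S}$. By Lemma \ref{lem:formulaFilterHull} it suffices to show the following: for every finite $J\subseteq U$ we have $\bigcap_{\Gamma\in J}\Gamma\in\hat{E}$. Once this holds, the upward closedness of $\hat{E}$ (Remark \ref{rem:upperConjugationHullProperties}) gives $\filterhull{U}\subseteq\hat{E}$.

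The key step, and the main obstacle, is this intersection property, because $\hat{E}$ itself need not be a filter. I would pass to cores. For $\Gamma\in J$, pick $E\in\mathcal{E}$ with $\Gamma\in E$. As in the proof of Lemma \ref{lem:filterHullProperties}(iv), $E$ is a conjugation invariant filter and $\Gamma$ has only finitely many conjugates, so $\Gamma_G\in E\subseteq\hat{E}$. Hence there are $g_\Gamma\in G$ and $\Lambda_\Gamma\in S$ with $\Lambda_\Gamma\subseteq(\Gamma_G)^{g_\Gamma}=\Gamma_G$, using that the core is normal.

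Since $S$ is a scale and $J$ is finite, there is $\Lambda\in S$ with $\Lambda\subseteq\bigcap_{\Gamma\in J}\Lambda_\Gamma\subseteq\bigcap_{\Gamma\in J}\Gamma_G\subseteq\bigcap_{\Gamma\in J}\Gamma$. Therefore $\bigcap_{\Gamma\in J}\Gamma\in\upperhull{S}\subseteq\eigenhull{S}=\hat{E}$. This gives the intersection property and completes the plan. The essential point is that normality of the cores removes the conjugation ambiguity in $\eigenhull{S}$, so the scale property of $S$ can be applied directly.
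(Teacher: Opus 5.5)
Your proposal is correct and follows essentially the same route as the paper. Part (i) is identical. In part (ii) you inline the core-based argument from the proof of Lemma~\ref{lem:filterHullProperties}(iv) and apply the scale property of $S$ to finite intersections directly. The paper instead cites that lemma to get $\bigcup_{E\in\mathcal{E}}E\subseteq\filterhull{S}=\upperhull{S}$, and then uses that $\upperhull{S}$ is a filter contained in $\hat{E}$.
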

\begin{proof}
(i): 
    Since the union of conjugation invariant sets is conjugation invariant we observe $\bigcup_{E\in \mathcal{E}}E$ to be conjugation invariant. It follows from Lemma \ref{lem:filterHullProperties} that $\bigvee \mathcal{E}$ is a filtered eigenset. 

(ii): 
    Let $\hat{E}$ be an eigenset that contains $\bigcup_{E\in \mathcal{E}}E$ and choose a scale $S$ with $\hat{E}=\eigenhull{S}$.
    From Lemma \ref{lem:filterHullProperties} we observe that for $E\in \mathcal{E}$ we have $E\subseteq \filterhull{S}$, i.e.\ $\bigcup_{E\in \mathcal{E}}E\subseteq \filterhull{S}$. 
    It follows that $\bigvee \mathcal{E}=\filterhull{(\bigcup_{E\in \mathcal{E}}E)}\subseteq \filterhull{S}$.
    Since $S$ is a scale we have $\filterhull{S}=\upperhull{S}$ and hence 
    $\bigvee \mathcal{E}\subseteq \filterhull{S}\subseteq \conhull{(\upperhull{S})}=\eigenhull{S}=\hat{E}$. 
\end{proof}

We can now strengthen the statement of Corollary \ref{cor:filteredEigensetsFormCompleteLattice}. 

\begin{theorem}
\label{the:completeSublatticeEigensets}
    $\allFilteredEigensets{G}$ is a complete sublattice of $\allEigensets{G}$, i.e.\ for any family of filtered eigensets $\mathcal{E}$ there exists a supremum $\bigvee \mathcal{E}$ and an infimum $\bigwedge \mathcal{E}$ in $\allEigensets{G}$ and both are contained in $\allFilteredEigensets{G}$.  
\end{theorem}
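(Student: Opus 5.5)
The plan is to assemble the theorem from Proposition \ref{pro:intersectionFiltersAndFilteredEigensets} and Proposition \ref{pro:explicitSupremum}. Fix a family $\mathcal{E}\subseteq\allFilteredEigensets{G}$. We treat the supremum and the infimum separately. In each case we exhibit a candidate that is a filtered eigenset and then verify its universal property among \emph{all} eigensets, not only the filtered ones.

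\textbf{Supremum.} Take $\bigvee\mathcal{E}=\filterhull{(\bigcup_{E\in\mathcal{E}}E)}$.
\begin{itemize}
\item By Proposition \ref{pro:explicitSupremum}(i), $\bigvee\mathcal{E}$ is a filtered eigenset.
\item Since $\bigcup_{E\in\mathcal{E}}E\subseteq\filterhull{(\bigcup_{E\in\mathcal{E}}E)}$, it is an upper bound of $\mathcal{E}$ in $\allEigensets{G}$.
\item Let $\hat E\in\allEigensets{G}$ be any upper bound of $\mathcal{E}$. Then $\hat E$ contains $\bigcup_{E\in\mathcal{E}}E$, so Proposition \ref{pro:explicitSupremum}(ii) gives $\bigvee\mathcal{E}\subseteq\hat E$.
\end{itemize}
Hence $\bigvee\mathcal{E}$ is the supremum in $\allEigensets{G}$, and it lies in $\allFilteredEigensets{G}$.

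For $\mathcal{E}=\emptyset$ the union is empty. Its filter hull is $\{G\}$, which is the least element of $\allEigensets{G}$, so this case is consistent with the above.

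\textbf{Infimum, nonempty $\mathcal{E}$.} Take $\bigwedge\mathcal{E}=\bigcap_{E\in\mathcal{E}}E$.
\begin{itemize}
\item By Proposition \ref{pro:intersectionFiltersAndFilteredEigensets}(ii), it is a filtered eigenset.
\item It is clearly a lower bound of $\mathcal{E}$.
\item Any eigenset $\hat E$ contained in every $E\in\mathcal{E}$ is contained in their intersection.
\end{itemize}
So $\bigwedge\mathcal{E}$ is the greatest lower bound among all eigensets. This step is immediate because the order is set inclusion.

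\textbf{Infimum, empty $\mathcal{E}$.} The infimum must be the greatest element of $\allEigensets{G}$. That element is $\subfin(G)$, which is a conjugation invariant filter and hence filtered. Under the convention that the empty intersection is $\subfin(G)$, the same formula covers this case.

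The only real content is the supremum's universal property against non-filtered eigensets. That is handled by Lemma \ref{lem:filterHullProperties}(iv) inside Proposition \ref{pro:explicitSupremum}(ii), so no new obstacle remains beyond checking these edge cases.
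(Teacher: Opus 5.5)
Your proposal is correct and is essentially the paper's own proof. The infimum $\bigcap_{E\in\mathcal{E}}E$ comes from Proposition \ref{pro:intersectionFiltersAndFilteredEigensets} and is trivially the infimum among all eigensets, and the supremum $\filterhull{(\bigcup_{E\in\mathcal{E}}E)}$ comes from Proposition \ref{pro:explicitSupremum}. Your separate treatment of the empty family (yielding $\{G\}$ and $\subfin(G)$) is a small but genuine addition: for $\mathcal{S}=\emptyset$, the paper's proof of Proposition \ref{pro:intersectionFiltersAndFilteredEigensets}(i) uses $\grouphull{\bigcup_{S\in\mathcal{S}}\Lambda_S}$, which is then trivial and generally not of finite index.
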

\begin{remark}
    The formulas for the supremum and infimum are given by 
    $\bigvee \mathcal{E}:= \filterhull{(\bigcup_{E\in \mathcal{E}}E)}$ and $\bigwedge \mathcal{E}:=\bigcap_{E\in \mathcal{E}}E$. 
\end{remark}
\begin{proof}[Proof of Theorem \ref{the:completeSublatticeEigensets}.]
    From Proposition \ref{pro:intersectionFiltersAndFilteredEigensets} we know that $\bigwedge\mathcal{E}:=\bigcap_{E\in \mathcal{E}}E$ is a filtered eigenset. 
    Clearly, it is not only the infimum of $\mathcal{E}$ in $\allFilteredEigensets{G}$, but also in $\allEigensets{G}$. 

    It follows from Proposition \ref{pro:explicitSupremum} that 
    $\bigvee \mathcal{E}:= \filterhull{(\bigcup_{E\in \mathcal{E}}E)}$ is a filtered eigenset and the supremum of $\mathcal{E}$ within $\allEigensets{G}$. 
\end{proof}

\begin{proof}[Proof of Theorem \ref{the:INTROOdometerLattice}:]
    Let $\mathfrak{X}$ be a non-empty family of odometers. 
    Denote $\bigvee\mathfrak{X}$ for the odometer associated with $\bigvee \{\Eig(X,G);\, (X,G)\in \mathfrak{X}\}$ and 
    $\bigwedge\mathfrak{X}$ for the odometer associated with $\bigwedge \{\Eig(X,G);\, (X,G)\in \mathfrak{X}\}$. 
    It follows from the discussion at the beginning of this section that $\bigvee \mathfrak{X}$ and $\bigwedge \mathfrak{X}$ satisfy (a$_1$) and (a$_2$). 
    Furthermore, since $\mathfrak{X}$ is non-empty and factors of subodometers are subodometers, we observe that $\bigwedge \mathfrak{X}$ satisfies (b$_2$) from Theorem \ref{the:completeSublatticeEigensets}. 

    To show that $\bigvee \mathfrak{X}$ satisfies (b$_1$) consider a minimal action $(Y,G)$ that is an extension of all $(X,G)\in \mathfrak{X}$. 
    From Theorem \ref{the:eigConjugacyInvariant} we know that 
    $\Eig(X,G)\subseteq \Eig(Y,G)$ for all $(X,G)\in \mathfrak{X}$. 
    Since $\Eig(Y,G)$ is an eigenset (Remark \ref{rem:eigensetsViaMinimalActions}), it follows that 
    \[\Eig\left(\bigvee\mathfrak{X},G\right)=\bigvee \{\Eig(X,G);\, (X,G)\in \mathfrak{X}\}\subseteq \Eig(Y,G).\] 
    Thus, Theorem \ref{the:eigConjugacyInvariant} yields that $\bigvee \mathfrak{X}$ is a factor of $(Y,G)$. 

    To show that $\bigvee \mathfrak{X}$ is the unique minimal action (up to conjugacy) that satisfies (a$_1$) and (b$_1$) consider a minimal action $(Z,G)$ that also satisfies (a$_1$) and (b$_1$). 
    It follows that $(Z,G)$ is an extension and a factor of $\bigvee \mathfrak{X}$. 
    As a factor of $\bigvee\mathfrak{X}$ it is a subodometer and the coalescence of minimal equicontinuous actions yields that $\bigvee\mathfrak{X}$ and $(Z,G)$ are conjugated. 
    Similarly, one establishes that $\bigwedge \mathfrak{X}$ is the unique minimal action (up to conjugacy) that satisfies (a$_2$) and (b$_2$). 
\end{proof}

\subsection{Modularity}
\label{subsec:eigensets_Modularity}
It is well-known that the set $\subfinnorm(G)$ of all normal finite index subgroups of $G$ forms a lattice under set inclusion, where the supremum and infimum are given by 
$\grouphull{\Gamma\cup \Lambda}=\Gamma\Lambda$ and 
$\Gamma\cap \Lambda$
\cite[Chapter 1]{birkhoff1967lattice}. 
As a lattice $\subfinnorm(G)$ is \emph{modular}, i.e.\ for $\Gamma_1,\Gamma_2,\Gamma\in \subfin(G)$ with $\Gamma_1\supseteq \Gamma_2$ the \emph{modular law}
\[(\Gamma_1\cap \Gamma)\Gamma_2=\Gamma_1\cap (\Gamma\Gamma_2)\]
is satisfied. 
Reversing the order of $\subfinnorm(G)$ it embeds into the order complete lattice $\allFilteredEigensets{G}$ via $\Gamma\mapsto \eigenhull{\{\Gamma\}}$. 
We will next show that $\allFilteredEigensets{G}$ inherits the modularity of $\subfin(G)$, i.e.\ we show Theorem \ref{the:INTROmodularLattice}. 

\begin{proposition}[Modular Law]
    For $E_1,E_2,E\in \allFilteredEigensets{G}$ with $E_1\subseteq E_2$ we have 
    $(E_1\vee E)\wedge E_2=E_1\vee (E\wedge E_2).$
\end{proposition}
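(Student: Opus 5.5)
The plan is to reduce the statement to the modular law for normal finite index subgroups. The reduction uses the fact that filtered eigensets are exactly the conjugation invariant filters, so each one is determined by its normal members.

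\emph{Reduction to normal subgroups.} First I record that for $E\in\allFilteredEigensets{G}$ and $\Gamma\in E$ the core $\Gamma_G$ lies in $E$. This was shown in the proof of Lemma \ref{lem:filterHullProperties}(iv): $E$ is a conjugation invariant scale and $\Gamma$ has only finitely many conjugates. Hence $E=\upperhull{(E\cap\subfinnorm(G))}$. Next I describe the operations on normal members.
\begin{itemize}
\item The infimum is $E\wedge E'=E\cap E'$ by Theorem \ref{the:completeSublatticeEigensets}.
\item For the supremum, Lemma \ref{lem:formulaFilterHull} together with the fact that each $E$ is intersection closed gives that $\Gamma\in E\vee E'$ if and only if $\Gamma\supseteq \Lambda\cap\Lambda'$ for some $\Lambda\in E$ and $\Lambda'\in E'$.
\item By passing to cores, $\Lambda$ and $\Lambda'$ can be taken normal.
\end{itemize}

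\emph{The easy inclusion.} The inclusion $E_1\vee(E\wedge E_2)\subseteq (E_1\vee E)\wedge E_2$ holds in any lattice. Indeed, $E_1\subseteq E_1\vee E$ and $E_1\subseteq E_2$, while $E\wedge E_2$ is contained in both $E_1\vee E$ and $E_2$. So the right-hand side is an upper bound of $E_1$ and $E\wedge E_2$.

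\emph{The reverse inclusion.} Let $\Gamma\in(E_1\vee E)\wedge E_2$. Since both sides are conjugation invariant filters, $\Gamma_G$ also lies in $(E_1\vee E)\wedge E_2$. If I show $\Gamma_G\in E_1\vee(E\wedge E_2)$, then upward closedness gives $\Gamma\in E_1\vee(E\wedge E_2)$. So I may assume $\Gamma$ is normal. Then there are normal $\Gamma_1\in E_1$ and $\Lambda\in E$ with $\Gamma_1\cap\Lambda\subseteq\Gamma$. Set $\Lambda':=\Lambda(\Gamma_1\cap\Gamma)$, a normal finite index subgroup, and check:
\begin{itemize}
\item $\Lambda'\supseteq\Lambda$, so $\Lambda'\in E$.
\item $\Gamma_1\in E_1\subseteq E_2$ and $\Gamma\in E_2$, and $E_2$ is a filter, so $\Gamma_1\cap\Gamma\in E_2$. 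Since $\Lambda'\supseteq\Gamma_1\cap\Gamma$, also $\Lambda'\in E_2$.
\end{itemize}
Thus $\Lambda'\in E\wedge E_2$. It remains to show $\Gamma_1\cap\Lambda'\subseteq\Gamma$, which puts $\Gamma$ in $E_1\vee(E\wedge E_2)$. Since $\Gamma_1\supseteq\Gamma_1\cap\Gamma$, the modular law for normal subgroups gives
\[\Gamma_1\cap\bigl(\Lambda(\Gamma_1\cap\Gamma)\bigr)=(\Gamma_1\cap\Lambda)(\Gamma_1\cap\Gamma)\subseteq\Gamma\Gamma=\Gamma.\]

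\emph{Main obstacle.} The hard step is finding the correct witness $\Lambda'$. The naive choice $\Lambda\Gamma$ lies in $E\wedge E_2$, but its intersection with $\Gamma_1$ need not be contained in $\Gamma$. The choice $\Lambda(\Gamma_1\cap\Gamma)$ uses the hypothesis $E_1\subseteq E_2$ precisely to place $\Gamma_1\cap\Gamma$ in $E_2$. The other steps are routine checks: that the modular law applies in the stated form, and that cores may be taken without leaving the relevant filters.
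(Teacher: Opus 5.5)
Your proof is correct and follows essentially the same route as the paper. You reduce the finite intersection to two normal witnesses $\Gamma_1\in E_1$, $\Lambda\in E$, build a witness in $E\cap E_2$ by multiplying $\Lambda$ with a normal member of $E_2$ contained in $\Gamma_1\cap\Gamma$, and conclude with the Dedekind modular law; the differences are cosmetic (you replace the target $\Gamma$ by its core so that $\Gamma_1\cap\Gamma$ itself serves as that normal member, where the paper picks a normal $\Gamma_2\in E_2$ inside $\Gamma_1\cap\Gamma$, and you get the easy inclusion from general lattice theory rather than from the filter-hull formula).
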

\begin{proof}
    Clearly, we have $E_1\cup (E\cap E_2)\subseteq (E_1\cup E)\cap E_2\subseteq \filterhull{(E_1\cup E)}\cap E_2$. 
    The latter is an intersection of filters and hence a filter. 
    Thus, we have
    \[E_1\vee (E\wedge E_2)=\filterhull{(E_1\cup (E\cap E_2))}\subseteq \filterhull{(E_1\cup E)}\cap E_2=(E_1\vee E)\wedge E_2.\] 

    For the converse consider $\Lambda\in (E_1\vee E)\wedge E_2=\filterhull{(E_1\cup E)}\cap E_2$. 
    By Lemma \ref{lem:formulaFilterHull} there exists a finite subset $J\subseteq E_1\cup E$ with $\bigcap_{\Gamma\in J}\Gamma\subseteq \Lambda$. 
    Since $E_1$ and $E$ are filtered eigensets they are scales and core-stable (Theorem \ref{the:characterizationOdometers}). 
    We thus assume w.l.o.g.\ that $J=\{\Gamma_1,\Gamma\}$ for some normal subgroups $\Gamma_1\in E_1$ and $\Gamma\in E$.
    Note that $\Gamma_1\cap \Gamma\subseteq \Lambda$. 

    Furthermore, since $\Gamma_1\in E_1\subseteq E_2$ we have $\Gamma_1,\Lambda\in E_2$. Since $E_2$ is a core-stable scale we can choose a normal $\Gamma_2\in E_2$ with $\Gamma_2\subseteq \Gamma_1\cap \Lambda$. We have $\Gamma\Gamma_2\supseteq \Gamma\in E$ and $\Gamma\Gamma_2\supseteq \Gamma_2\in E_2$ and hence $\Gamma\Gamma_2\in E\cap E_2$. 
    Denote $J':=\{\Gamma_1,\Gamma\Gamma_2\}$ and note that $J'\subseteq E_1\cup (E\cap E_2)$. 
    Furthermore, exploring the modular law for $\Gamma_1\supseteq \Gamma_2$ we have 
    \begin{align*}
        \bigcap_{\Gamma'\in J'}\Gamma'
        &=\Gamma_1\cap (\Gamma\Gamma_2)
        =(\Gamma_1\cap \Gamma)\Gamma_2
        \subseteq \Lambda\Lambda
        =\Lambda.
    \end{align*}
    From the formula of the filter hull presented in Lemma \ref{lem:formulaFilterHull} we conclude that
    $\Lambda\in \filterhull{(E_1\cup (E\cap E_2))}= E_1\vee (E\wedge E_2)$. 
\end{proof}

\subsection{Suprema of metrizable odometers}
\label{subsec:eigensets_SupremaOfMetrizableOdometers}

Recall that the metrizability of a subodometer is reflected in the countability of the respective eigenset. 
Whenever $\mathfrak{X}$ is a non-empty family of metrizable odometers, then clearly also its factor $\bigwedge\mathfrak{X}$ is metrizable. 
In general the minimal common extension $\bigvee \mathfrak{X}$ of a non-empty family of metrizable odometers $\mathfrak{X}$ needs not be metrizable as illustrated by the following. 

\begin{example}
    Let $G$ be a group for which $S:=\subfin(G)$ is uncountable (e.g.\ $G=F_\infty$) and note that $S$ is a scale. 
    Clearly, the $S$-subodometer $(X,G)$ is not metrizable. 
    However, it is straightforward to observe that $(X,G)$ is the minimal common extension of all finite odometers, which are clearly metrizable. 
\end{example}

Translating the following back to the category of odometers yields Theorem \ref{the:INTROmetrizabilitySupremum}.

\begin{proposition}
    If $\mathcal{E}$ is a countable family of countable filtered eigensets, then $\bigvee \mathcal{E}$ is also countable. 
\end{proposition}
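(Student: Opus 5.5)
The plan is to use the explicit formula for the supremum from Proposition \ref{pro:explicitSupremum}, namely
\[\bigvee \mathcal{E}=\filterhull{\left(\bigcup_{E\in\mathcal{E}}E\right)},\]
and then to reduce countability of this filter hull to countability of the set it is generated from.

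The first step is to observe that $U:=\bigcup_{E\in\mathcal{E}}E$ is countable, since $\mathcal{E}$ is a countable family and each $E\in\mathcal{E}$ is countable by assumption. The second step is to apply Lemma \ref{lem:filterHullProperties}(ii). That lemma states that the filter hull of a countable subset of $\subfin(G)$ is countable, so $\filterhull{U}$ is countable.

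For completeness I would recall the mechanism behind Lemma \ref{lem:filterHullProperties}(ii), since this is where the real content sits. By Lemma \ref{lem:formulaFilterHull} we have
\[\filterhull{U}=\bigcup_{J\subseteq U\text{ finite}}\upperhull{\left\{\bigcap_{\Gamma\in J}\Gamma\right\}}.\]
A countable set has only countably many finite subsets. Each upper hull $\upperhull{\{\Lambda\}}$ with $\Lambda\in\subfin(G)$ is contained in $\eigenhull{\{\Lambda\}}$, which is finite by Corollary \ref{cor:eigenhullFinite}. Hence $\filterhull{U}$ is a countable union of finite sets, and therefore countable.

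No step is a genuine obstacle here. The only point needing care is checking that the cited results apply exactly as stated: Proposition \ref{pro:explicitSupremum} gives the supremum formula for any family of filtered eigensets, and Lemma \ref{lem:filterHullProperties}(ii) requires nothing beyond countability of the generating set. Combining these with the correspondence between countable eigensets and metrizable subodometers from Theorem \ref{the:characterizationMetrizableSubodometers} then yields Theorem \ref{the:INTROmetrizabilitySupremum}.
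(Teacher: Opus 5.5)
Your proposal is correct and follows essentially the same route as the paper: the union $\bigcup_{E\in\mathcal{E}}E$ is countable, and countability of its filter hull then follows from Lemma \ref{lem:formulaFilterHull} together with Lemma \ref{lem:filterHullProperties}(ii). Your unpacking of that lemma is accurate. In particular, you note that $\upperhull{\{\Lambda\}}\subseteq\eigenhull{\{\Lambda\}}$ and that the latter is finite by Corollary \ref{cor:eigenhullFinite}, which is exactly what the paper's lemma relies on.
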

\begin{proof}
    Note that $\bigcup_{E\in \mathcal{E}}E$ is countable as the countable union of countable sets. 
    It follows from the Lemmas \ref{lem:formulaFilterHull} and \ref{lem:filterHullProperties} that also 
    $\bigvee \mathcal{E}=\filterhull{(\bigcup_{E\in \mathcal{E}}E)}$ is countable. 
\end{proof}

\subsection{Scales for the minimal common extension and the maximal common factor}
\label{subsec:eigensets_ScalesForTheMinimalCommonExtensionAndTheMaximalCommonFactors}

\begin{proposition}
\label{pro:supremumInfimumScales}
    Let $\mathfrak{X}$ be a non-empty family of odometers. For $X\in \mathfrak{X}$ let $S_X$ be a scale that generates $X$. 
    \begin{itemize}
        \item[(i)] The minimal common extension $\bigvee \mathfrak{X}$ is generated by the scale
        \[
            S:=\left\{\bigcap_{\Gamma\in J}\Gamma;\, J\subseteq \bigcup_{X\in \mathfrak{X}}S_X\text{ finite}\right\}
        \] 
        \item[(ii)] The maximal common factor $\bigwedge \mathfrak{X}$ is generated by the scale 
        \[
            S:=\left\{\grouphull{\bigcup_{X\in \mathfrak{X}}\Gamma_X};\, (\Gamma_X)_{X\in \mathfrak{X}}\in \prod_{X\in \mathfrak{X}}S_X\right\}
        \]
    \end{itemize}
\end{proposition}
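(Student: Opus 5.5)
The plan is to work entirely on the level of eigensets. By Corollary \ref{cor:generationAndEigenhull}, a subodometer is generated by a scale $S$ if and only if its eigenset equals $\eigenhull{S}$. Put $E_X:=\Eig(X,G)=\eigenhull{S_X}$ for $X\in\mathfrak{X}$. Each $E_X$ is a filtered eigenset (Theorem \ref{the:characterizationOdometers}), hence a conjugation invariant filter that is core-stable. By the proof of Theorem \ref{the:INTROOdometerLattice}, $\Eig(\bigvee\mathfrak{X},G)=\filterhull{(\bigcup_X E_X)}$ and $\Eig(\bigwedge\mathfrak{X},G)=\bigcap_X E_X$. So in each case it suffices to check that the given $S$ is a scale and to compute $\eigenhull{S}$.

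For (i), I first check that $S$ is a scale. It is non-empty because $\mathfrak{X}\neq\emptyset$, and it is intersection closed, since a union of two finite sets $J$ is finite. Hence $\upperhull{S}$ is a filter, and Lemma \ref{lem:formulaFilterHull} gives $\upperhull{S}=\filterhull{(\bigcup_X S_X)}$.

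For the inclusion $\eigenhull{S}\subseteq\filterhull{(\bigcup_X E_X)}$, note that $\bigcup_X S_X\subseteq\bigcup_X E_X$. Hence $\upperhull{S}\subseteq\filterhull{(\bigcup_X E_X)}$. The right-hand side is conjugation invariant by Lemma \ref{lem:filterHullProperties}(iii), because a union of conjugation invariant sets is conjugation invariant. Taking conjugation hulls therefore gives $\eigenhull{S}=\conhull{(\upperhull{S})}\subseteq\filterhull{(\bigcup_X E_X)}$.

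For the reverse inclusion, $S\supseteq S_X$ for each $X$, so $S$ dominates $S_X$. By Remark \ref{rem:equivalentScalesYieldSameSubodometer}, $E_X=\eigenhull{S_X}\subseteq\eigenhull{S}$. Thus $\eigenhull{S}$ is an eigenset containing $\bigcup_X E_X$, and Proposition \ref{pro:explicitSupremum}(ii) yields $\filterhull{(\bigcup_X E_X)}\subseteq\eigenhull{S}$.

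For (ii), I again check first that $S$ is a scale. The product $\prod_X S_X$ is non-empty by the axiom of choice and because $\mathfrak{X}\neq\emptyset$. Each $\grouphull{\bigcup_X\Gamma_X}$ contains some $\Gamma_X\in\subfin(G)$, so it has finite index. Given two tuples $(\Gamma_X)$ and $(\Gamma'_X)$, I choose for every $X$ some $\Lambda_X\in S_X$ with $\Lambda_X\subseteq\Gamma_X\cap\Gamma'_X$. Then $\grouphull{\bigcup_X\Lambda_X}$ lies in both generated groups, by monotonicity of the generated subgroup.

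For the inclusion $\eigenhull{S}\subseteq\bigcap_X E_X$: each element of $S$ contains some $\Gamma_X\in S_X\subseteq E_X$ for every $X$. Since each $E_X$ is upward closed, $S\subseteq\bigcap_X E_X$. The latter is upward closed and conjugation invariant, so by Remark \ref{rem:upperConjugationHullProperties} it contains $\eigenhull{S}$.

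The reverse inclusion is the only step needing an idea, namely passing to normal cores. Let $\Lambda\in\bigcap_X E_X$. By core-stability, $\Lambda_G\in E_X=\eigenhull{S_X}$ for each $X$. So there are $g_X\in G$ and $\Gamma_X\in S_X$ with $\Gamma_X\subseteq(\Lambda_G)^{g_X}=\Lambda_G$, where the last equality holds because $\Lambda_G$ is normal. Choosing such $\Gamma_X$ for all $X$ gives $\grouphull{\bigcup_X\Gamma_X}\subseteq\Lambda_G\subseteq\Lambda$. Hence $\Lambda\in\upperhull{S}\subseteq\eigenhull{S}$.

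I expect no serious obstacle. The main care points are the use of core-stability (normality of $\Lambda_G$ is what removes the conjugating elements $g_X$ uniformly), the choice argument for infinite families in (ii), and the use of the conjugation invariance of the filter hull in (i).
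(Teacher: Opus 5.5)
Your proposal is correct and follows the paper's proof essentially step for step. In (i), you identify $\upperhull{S}$ with $\filterhull{(\bigcup_X S_X)}$, use conjugation invariance of the supremum, and invoke Proposition~\ref{pro:explicitSupremum}(ii) for the reverse inclusion, exactly as the paper does. In (ii), the only difference is cosmetic: you inline the normal-core argument, whereas the paper gets $\Lambda\in\upperhull{S_X}$ by citing Lemma~\ref{lem:filterHullProperties}(iv), whose proof is precisely that argument.
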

\begin{proof}
(i):
    It is straightforward to observe that $S$ is a scale.
    For $X\in \mathfrak{X}$ we have $S_X\subseteq S$ and hence $\eigenhull{S_X}\subseteq \eigenhull{S}$. 
    It follows that 
    \[\bigvee\{\Eig(X,G);\, X\in \mathfrak{X}\}=\bigvee\{\eigenhull{S_X};\, X\in \mathfrak{X}\}\subseteq \eigenhull{S}.\]

    Furthermore, it follows from Lemma \ref{lem:formulaFilterHull} and $\bigcup_{X\in \mathfrak{X}}S_X\subseteq \bigcup_{X\in \mathfrak{X}}\eigenhull{S_X}$ that 
    \[\upperhull{S}=\filterhull{S}
    \subseteq \filterhull{\left(\bigcup_{X\in \mathfrak{X}}\eigenhull{S_X}\right)}
    =\bigvee\{\eigenhull{S_X};\, X\in \mathfrak{X}\}.\] 
    Since $\bigvee\{\eigenhull{S_X};\, X\in \mathfrak{X}\}$ is conjugation invariant we observe that 
    \[\eigenhull{S}=\conhull{(\upperhull{S})}\subseteq \bigvee\{\eigenhull{S_X};\, X\in \mathfrak{X}\}=\bigvee\{\Eig(X,G);\, X\in \mathfrak{X}\}.\]
    This shows $\eigenhull{S}=\bigvee\{\Eig(X,G);\, X\in \mathfrak{X}\}$ and we conclude that $S$ generates $\bigvee \mathfrak{X}$. 

(ii): 
    To show that $S$ is a scale consider $(\Gamma_X)_X,(\Gamma_X')_X\in \prod_{X\in \mathfrak{X}}S_X$. 
    For $X\in \mathfrak{X}$ there exists $\Lambda_X\in S_X$ with $\Lambda_X\subseteq \Gamma_X\cap \Gamma_X'$. 
    Clearly, we have $S\ni \grouphull{\bigcup_{X\in \mathfrak{X}}\Lambda_X}\subseteq \grouphull{\bigcup_{X\in \mathfrak{X}}\Gamma_X}\cap \grouphull{\bigcup_{X\in \mathfrak{X}}\Gamma_X'}$. 
    This shows that $S$ is a scale. 

    For $X\in \mathfrak{X}$ it is straightforward to observe that $S_X$ dominates $S$. 
    This establishes 
    \[\eigenhull{S}\subseteq \bigcap_{X\in \mathfrak{X}}\eigenhull{S_X} =\bigwedge\{\Eig(X,G);\, X\in \mathfrak{X}\}.\]

    For the converse consider $\Gamma\in \bigwedge\{\Eig(X,G);\, X\in \mathfrak{X}\}=\bigcap_{X\in \mathfrak{X}}\eigenhull{S_X}$.
    Let $X\in \mathfrak{X}$. 
    Since $\eigenhull{S_X}$ is a filtered eigenset and $S_X$ is a scale we observe from Lemma \ref{lem:filterHullProperties} that $\Gamma\in\eigenhull{S_X}\subseteq \filterhull{S_X}=\upperhull{S_X}$.
    Thus there exists $\Gamma_X\in X$ with $\Gamma_X\subseteq \Gamma$. 
    We thus have $S\ni \grouphull{\bigcup_{X\in \mathfrak{X}}\Gamma_X}\subseteq \Gamma$ and hence $\Gamma\in \upperhull{S}\subseteq \eigenhull{S}$.   

    This shows $\eigenhull{S}=\bigwedge\{\Eig(X,G);\, X\in \mathfrak{X}\}$ and we conclude that $S$ generates $\bigwedge \mathfrak{X}$. 
\end{proof}

Recall that metrizable odometers are generated by normal chains and that any chain can be represented by a sequence $(\Gamma_n)_{n\in \mathbb{N}}$ with $\Gamma_{n+1}\subseteq \Gamma_n$. 
In the following whenever we represent a chain as a sequence we implicitly assume that $\Gamma_{n+1}\subseteq \Gamma_n$ is satisfied. 

\begin{proposition}
\label{pro:supremumChain}
    Let $\mathfrak{X}=(X_i)_{i\in \mathbb{N}}$ be a sequence of odometers.
    For $i\in \mathbb{N}$ let $(\Gamma_n^{(i)})_{n\in \mathbb{N}}$ be a chain that generates $X_i$ and denote
    \[\Gamma_n:=\bigcap_{i=1}^n\Gamma_n^{(i)}\]
    for $n\in \mathbb{N}$. 
    The chain $(\Gamma_n)_{n\in \mathbb{N}}$ generates the minimal common extension $\bigvee \mathfrak{X}$. 
\end{proposition}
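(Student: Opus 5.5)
The plan is to compare $(\Gamma_n)_{n\in\mathbb{N}}$ with the scale $S$ of Proposition \ref{pro:supremumInfimumScales}(i), built from $S_{X_i}:=\{\Gamma^{(i)}_n;\, n\in\mathbb{N}\}$. Since $S$ generates $\bigvee\mathfrak{X}$, Remark \ref{rem:equivalentScalesYieldSameSubodometer} reduces the claim to three things: $(\Gamma_n)_n$ is a chain, $S$ dominates it, and it dominates $S$. Equivalent scales then have the same upper conjugation hull, so $\eigenhull{\{\Gamma_n;\,n\in\mathbb{N}\}}=\eigenhull{S}=\Eig(\bigvee\mathfrak{X},G)$, and Corollary \ref{cor:generationAndEigenhull} finishes the proof.

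\emph{Chain property.} Each $(\Gamma^{(i)}_n)_n$ is decreasing, so $\Gamma_{n+1}=\bigcap_{i=1}^{n+1}\Gamma^{(i)}_{n+1}\subseteq\bigcap_{i=1}^{n}\Gamma^{(i)}_{n}=\Gamma_n$. Each $\Gamma_n$ is a finite intersection of finite index subgroups, so it lies in $\subfin(G)$.

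\emph{$S$ dominates $(\Gamma_n)_n$.} Each $\Gamma_n$ is itself an element of $S$, namely the intersection over the finite set $J=\{\Gamma^{(1)}_n,\dots,\Gamma^{(n)}_n\}$.

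\emph{$(\Gamma_n)_n$ dominates $S$.} An element of $S$ has the form $\bigcap_{\Gamma\in J}\Gamma$ with $J$ finite, so $J\subseteq\{\Gamma^{(i)}_{m};\, i\le N,\ m\le N\}$ for some $N$. For every $i\le N$ and $m\le N$ the chains are decreasing, so $\Gamma_N\subseteq\Gamma^{(i)}_N\subseteq\Gamma^{(i)}_m$. Hence $\Gamma_N\subseteq\bigcap_{\Gamma\in J}\Gamma$.

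The main obstacle is choosing the single index $N$ that bounds both the chain index $i$ and the depth $m$. The diagonal definition of $\Gamma_n$ is exactly what lets one $N$ serve both purposes. The rest follows directly from earlier results. Normality of the chains plays no role here: the odometer hypothesis enters only through Proposition \ref{pro:supremumInfimumScales}(i).
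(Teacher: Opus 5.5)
Your proposal is correct and follows the paper's proof essentially step for step. The paper also compares $(\Gamma_n)_{n\in\mathbb{N}}$ with the scale $S$ from Proposition \ref{pro:supremumInfimumScales}(i), notes that each $\Gamma_n$ lies in $S$, bounds every element $\bigcap_{\Lambda\in J}\Lambda$ of $S$ from below by $\Gamma_n$ with $n$ the maximum of both the chain indices and the depths occurring in $J$, and concludes from the equivalence of the two scales.
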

\begin{remark}
    Note that if all $(\Gamma_n^{(i)})_{n\in \mathbb{N}}$ are normal, so is $(\Gamma_n)_{n\in \mathbb{N}}$. 
\end{remark}
\begin{remark}
    It follows that for normal chains $(\Gamma_n)_{n\in \mathbb{N}}$ and $(\Lambda_n)_{n\in \mathbb{N}}$ the minimal common extension of the generated odometers is generated by $(\Gamma_n\cap \Lambda_n)_{n\in \mathbb{N}}$. 
\end{remark}
\begin{proof}[Proof of Proposition \ref{pro:supremumChain}:]
    Clearly, $(\Gamma_n)_{n\in \mathbb{N}}$ is a chain. 
    Denote $S$ for the scale given in Proposition \ref{pro:supremumInfimumScales} and note that $\Gamma_n\in S$ holds for all $n\in \mathbb{N}$. 
    In order to show that $(\Gamma_n)_{n\in \mathbb{N}}$ dominates $S$ consider $\Gamma\in S$ and choose a finite subset $J\subseteq \bigcap_{i,n\in \mathbb{N}}\Gamma_n^{(i)}$ with $\Gamma=\bigcap_{\Lambda\in J}\Lambda$. 
    For $\Lambda\in J$ there exist $i_\Lambda,n_\Lambda\in \mathbb{N}$ such that $\Lambda=\Gamma_{n_\Lambda}^{(i_\Lambda)}$. 
    Let $n:=\max\{i_\Lambda,n_\Lambda;\, \Lambda\in J\}$ and note that 
    \[\Gamma_n
    =\bigcap_{i=1}^n \Gamma_n^{(i)}  
    =\bigcap_{i=1}^n \bigcap_{k=1}^n \Gamma_k^{(i)} 
    \subseteq \bigcap_{\Lambda\in J} \Gamma_{n_\Lambda}^{(i_\Lambda)}
    =\bigcap_{\Lambda\in J}\Lambda
    =\Gamma.\]
    This shows that $(\Gamma_n)_{n\in \mathbb{N}}$ and $S$ are equivalent scales and Proposition \ref{pro:supremumInfimumScales} yields
    that both generate $\bigvee \mathfrak{X}$. 
\end{proof}

Given a countable (non-empty) family of normal chains it remains open, whether the formula in Proposition \ref{pro:supremumInfimumScales} yields a simplification similar to Proposition \ref{pro:supremumChain}. 
However, for pairs of metrizable odometers, we have the following. 

\begin{proposition}
    Let $(X,G)$ and $(Y,G)$ be subodometers generated by chains $(\Gamma_n)_{n\in \mathbb{N}}$ and $(\Lambda_n)_{n\in \mathbb{N}}$. 
    The maximal common factor $X\wedge Y$ is generated by the chain $(\grouphull{\Gamma_n\cup \Lambda_n})_{n\in \mathbb{N}}$. 
\end{proposition}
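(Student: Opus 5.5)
The plan is to reduce the statement to Proposition \ref{pro:supremumInfimumScales}~(ii) and then show that the diagonal chain is equivalent to the scale given there. Since the maximal common factor $X\wedge Y$ has only been defined for odometers (Theorem \ref{the:INTROOdometerLattice}), I read the hypothesis as saying that $(X,G)$ and $(Y,G)$ are odometers, generated by the chains $(\Gamma_n)_{n\in\mathbb{N}}$ and $(\Lambda_n)_{n\in\mathbb{N}}$. By the convention fixed above, $\Gamma_{n+1}\subseteq\Gamma_n$ and $\Lambda_{n+1}\subseteq\Lambda_n$ for all $n$.

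First, apply Proposition \ref{pro:supremumInfimumScales}~(ii) with $\mathfrak{X}=\{X,Y\}$, $S_X=\{\Gamma_m;\, m\in\mathbb{N}\}$ and $S_Y=\{\Lambda_k;\, k\in\mathbb{N}\}$. This says that $X\wedge Y$ is generated by the scale
\[S:=\left\{\grouphull{\Gamma_m\cup\Lambda_k};\, m,k\in\mathbb{N}\right\}.\]
Put $T:=\{\grouphull{\Gamma_n\cup\Lambda_n};\, n\in\mathbb{N}\}$. Each member of $T$ contains $\Gamma_n$, so by upward closedness of $\subfin(G)$ it is a finite index subgroup. Since $\Gamma_{n+1}\subseteq\Gamma_n$ and $\Lambda_{n+1}\subseteq\Lambda_n$, we get $\grouphull{\Gamma_{n+1}\cup\Lambda_{n+1}}\subseteq\grouphull{\Gamma_n\cup\Lambda_n}$. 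Hence $T$ is a chain, written as a decreasing sequence in the required form.

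Next, show that $S$ and $T$ are equivalent. Because $T\subseteq S$, the scale $S$ dominates $T$ trivially. For the other direction, take $\grouphull{\Gamma_m\cup\Lambda_k}\in S$ and set $n:=\max\{m,k\}$. Then $\Gamma_n\subseteq\Gamma_m$ and $\Lambda_n\subseteq\Lambda_k$, so $\grouphull{\Gamma_n\cup\Lambda_n}\subseteq\grouphull{\Gamma_m\cup\Lambda_k}$. Thus $T$ dominates $S$.

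By Remark \ref{rem:equivalentScalesYieldSameSubodometer}, equivalent scales satisfy $\eigenhull{T}=\eigenhull{S}=\Eig(X\wedge Y,G)$. Corollary \ref{cor:generationAndEigenhull} then shows that the chain $T$ generates $X\wedge Y$.

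There is no serious obstacle here. The only points needing care are the monotonicity of $(\Gamma,\Lambda)\mapsto\grouphull{\Gamma\cup\Lambda}$ in both arguments, which drives the domination argument, and the possibility that a chain is finite. In the finite case the sequence representation simply repeats its last term, and the argument goes through unchanged.
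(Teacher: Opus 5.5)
Your proof is correct and follows the paper's argument: you invoke Proposition \ref{pro:supremumInfimumScales}~(ii) to get the scale $\{\grouphull{\Gamma_m\cup\Lambda_k};\, m,k\in\mathbb{N}\}$ for $X\wedge Y$, then show it is equivalent to the diagonal chain by passing to $n=\max\{m,k\}$. Reading $X$ and $Y$ as odometers matches how the paper uses the statement, and writing $\grouphull{\Gamma_m\cup\Lambda_k}$ avoids the paper's product notation $\Gamma_n\Lambda_m$, which implicitly presumes normality.
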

\begin{remark}
    If $(\Gamma_n)_{n\in \mathbb{N}}$ and $(\Lambda_n)_{n\in \mathbb{N}}$ are normal chains, then 
    $(\Gamma_n \Lambda_n)_{n\in \mathbb{N}}=(\grouphull{\Gamma_n\cup \Lambda_n})_{n\in \mathbb{N}}$ is a normal chain that generates $X\wedge Y$. 
\end{remark}
\begin{proof}
    Denote $S$ for the scale considered in Proposition \ref{pro:supremumInfimumScales} and note that $\Gamma_n\in S$ holds for all $n\in \mathbb{N}$. Thus $S$ dominates $(\Gamma_n)_{n\in \mathbb{N}}$. 
    
    To show the converse consider $\Gamma\in S$. There exist $(n,m)\in \mathbb{N}^2$ with $\Gamma=\Gamma_n\Lambda_m$. 
    W.l.o.g.\ we assume that $n\geq m$ and observe $S\ni\Gamma_n\Lambda_n\subseteq \Gamma_n\Lambda_m=\Gamma$. 
    This shows that $(\Gamma_n)_{n\in \mathbb{N}}$ dominates $S$. 
    Thus, $S$ and $(\Gamma_n)_{n\in \mathbb{N}}$ are equivalent scales and Proposition \ref{pro:supremumInfimumScales} yields that both generate $X\wedge Y$. 
\end{proof}

\section{Universal subodometers}
\label{sec:universalSubodometers}

\subsection{The universal odometer}
\label{subsec:universalSubodometers_theUniversalOdometer}

Note that $\subfin(G)$ is a filtered eigenset. 
Considering the associated odometer we observe the following. 

\begin{proposition}
    There exists an odometer $(\hat{X},G)$ that has all subodometers $(X,G)$ as factors. It is unique up to conjugacy. 
    The odometer $(\hat{X},G)$ is called the \emph{universal odometer (w.r.t.\ $G$)}. 
\end{proposition}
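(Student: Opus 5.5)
The plan is to take $(\hat{X},G)$ to be the subodometer associated with the eigenset $\subfin(G)$, and then argue existence, universality and uniqueness in turn.

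\emph{Existence and the odometer property.} First I check that $\subfin(G)$ is an eigenset. It is a scale, since for $\Gamma,\Gamma'\in\subfin(G)$ the intersection $\Gamma\cap\Gamma'$ again has finite index. It is upward closed and conjugation invariant, because conjugates of finite index subgroups have the same index. By Remark \ref{rem:upperConjugationHullProperties} we therefore get $\eigenhull{\subfin(G)}=\subfin(G)$, so $\subfin(G)$ is generated by itself as a scale. It is moreover a filter, hence a filtered eigenset. Concretely, $(\hat{X},G)$ is the $\subfin(G)$-subodometer. Proposition \ref{pro:scaleEigensetTransition} gives $\Eig(\hat{X},G)=\subfin(G)$, which is a filter. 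Theorem \ref{the:characterizationOdometers} ((v)$\Rightarrow$(i)) then shows that $(\hat{X},G)$ is an odometer.

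\emph{Universality.} Let $(X,G)$ be any subodometer. By definition $\Eig(X,G)\subseteq\subfin(G)=\Eig(\hat{X},G)$. Since $(\hat{X},G)$ is minimal, Theorem \ref{the:eigConjugacyInvariant}(i) shows that $(X,G)$ is a factor of $(\hat{X},G)$.

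\emph{Uniqueness.} Suppose $(Y,G)$ is another odometer having every subodometer as a factor. In particular it has $(\hat{X},G)$ as a factor, and $(\hat{X},G)$ has $(Y,G)$ as a factor. Both are minimal and equicontinuous, so coalescence gives a conjugacy. Alternatively, $\subfin(G)\subseteq\Eig(Y,G)\subseteq\subfin(G)$, and Theorem \ref{the:eigConjugacyInvariant}(ii) applies.

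I do not expect a genuine obstacle. The only point needing care is confirming that $\subfin(G)$ really is an eigenset, i.e.\ that it is closed under finite intersections and conjugation, so that the associated subodometer exists. The rest is a direct application of the earlier theorems.
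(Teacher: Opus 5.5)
Your proposal is correct and follows the paper's own argument: the paper also takes the odometer associated with the filtered eigenset $\subfin(G)$, and universality and uniqueness then follow from Theorem \ref{the:eigConjugacyInvariant} and coalescence. You spell out the verifications the paper leaves implicit, namely that $\subfin(G)$ is a conjugation invariant filter and hence a filtered eigenset.
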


\begin{remark}
    The universal odometer is metrizable if and only if $\subfin(G)$ is countable, as already discussed in Corollary \ref{cor:allSubodometersMetrizable}. 
\end{remark}

\begin{remark}
    The set $S$ of all normal $\Gamma\in \subfin(G)$ is intersection closed and hence a scale. 
    For $\Lambda\in \subfin(G)$ we have $\Lambda_G\leq\Lambda$. 
    Thus, $S$ dominates $\subfin(G)$ and we observe that $S$ generates the universal odometer. 
\end{remark}

\subsection{The maximal subodometer factor}
\label{subsec:universalSubodometers_theMaximalSubodometerFactor}

Let $(X,G)$ be a minimal action. As discussed in Remark \ref{rem:eigensetsViaMinimalActions} $\Eig(X,G)$ is an eigenset. 
Considering the associated subodometer Theorem \ref{the:eigConjugacyInvariant} yields the following. 

\begin{proposition}
\label{pro:maximalSubodometerFactorExistence}
    Let $(X,G)$ be a minimal action. 
    There exists a subodometer $(X_{\operatorname{MSF}},G)$ such that 
    \begin{itemize}
        \item[(a)] $(X_{\operatorname{MSF}},G)$ is a factor of $(X,G)$. 
        \item[(b)] Whenever $(Y,G)$ is a subodometer and a factor of $(X,G)$, then $(Y,G)$ is a factor of $(X_{\operatorname{MSF}},G)$. 
    \end{itemize}
    The subodometer satisfying (a) and (b) is unique up to conjugacy and called the \emph{maximal subodometer factor of $(X,G)$}. 
    It satisfies 
    \[\Eig(X,G)=\Eig(X_{\operatorname{MSF}},G).\] 
\end{proposition}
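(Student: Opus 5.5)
The natural choice is to take $(X_{\operatorname{MSF}},G)$ to be the subodometer associated with the eigenset $\Eig(X,G)$. By Remark~\ref{rem:eigensetsViaMinimalActions}, $\Eig(X,G)$ is an eigenset: it equals $\eigenhull{\Eig_x(X,G)}$, and $\Eig_x(X,G)$ is a filter, hence a scale, by Theorem~\ref{the:scalesForSubodometersViaEigx}. Concretely, $X_{\operatorname{MSF}}$ can be taken as the $\Eig_x(X,G)$-subodometer for some fixed $x\in X$. Proposition~\ref{pro:scaleEigensetTransition}(ii) then gives
\[\Eig(X_{\operatorname{MSF}},G)=\eigenhull{\Eig_x(X,G)}=\Eig(X,G),\]
which is the final displayed identity of the statement.

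For (a), since $X_{\operatorname{MSF}}$ is a subodometer with $\Eig(X_{\operatorname{MSF}},G)\subseteq \Eig(X,G)$, Theorem~\ref{the:eigConjugacyInvariant}(i) yields a factor map $X\to X_{\operatorname{MSF}}$. For (b), let $(Y,G)$ be a subodometer that is a factor of $(X,G)$. Eigenvalues pass to extensions, so $\Eig(Y,G)\subseteq \Eig(X,G)=\Eig(X_{\operatorname{MSF}},G)$. Applying Theorem~\ref{the:eigConjugacyInvariant}(i) again, now with $X_{\operatorname{MSF}}$ as the minimal action, shows that $Y$ is a factor of $X_{\operatorname{MSF}}$.

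For uniqueness, suppose $Z$ is another subodometer satisfying (a) and (b). Applying (b) for $Z$ to $X_{\operatorname{MSF}}$ and (b) for $X_{\operatorname{MSF}}$ to $Z$ shows that each is a factor of the other. Coalescence of minimal equicontinuous actions then gives a conjugacy between them. Finally, any subodometer with (a) and (b) is conjugate to the one constructed, so the eigenvalue identity holds for it as well.

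There is no real obstacle here. The only point needing care is that Theorem~\ref{the:eigConjugacyInvariant}(i) only requires the source action to be minimal, and we must confirm that $\Eig(X,G)$ is an eigenset even when $X$ is not equicontinuous. Both points are already covered by results stated earlier in the paper.
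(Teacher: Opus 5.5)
Your proposal is correct and follows the paper's own argument. The paper likewise takes $(X_{\operatorname{MSF}},G)$ to be the subodometer associated with the eigenset $\Eig(X,G)$ (Remark~\ref{rem:eigensetsViaMinimalActions}), and obtains (a), (b) and uniqueness from Theorem~\ref{the:eigConjugacyInvariant} together with coalescence; you just spell out the details it leaves implicit, namely the concrete $\Eig_x(X,G)$-subodometer and the eigenvalue identity via Proposition~\ref{pro:scaleEigensetTransition}.
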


\begin{remark}
    The maximal subodometer factor of $X$ does not need to be an odometer. It is an odometer, if and only if $\Eig(X,G)$ is a filter. 
\end{remark}

\begin{remark}
    If $(X,G)$ is metrizable, then so is the maximal subodometer factor and we observe $\Eig(X,G)$ to be countable. This shows that metrizable actions have only countably many finite factors up to conjugacy. 
\end{remark}

\begin{remark}
    The universal subodometer is the maximal subodometer factor of the universal minimal flow \cite[Theorem 8.1]{auslander1988minimal}. 
    The maximal subodometer factor of a minimal action $(X,G)$ is conjugated to the maximal subodometer factor of the maximal equicontinuous factor $(X,G)$ \cite[Remark 9.4]{hauser2025mean}. 
\end{remark}

\begin{remark}
    Any scale $S$ that generates $\Eig(X,G)$ also generates the maximal subodometer factor of $(X,G)$. 
    In particular, it follows from Theorem \ref{the:scalesForSubodometersViaEigx} that for $x\in X$ the filter $\Eig_x(X,G)$ generates $(X_{\operatorname{MSF}},G)$. 
\end{remark}

\subsection{The maximal odometer factor}
\label{subsec:universalSubodometers_theMaximalOdometerFactor}
For a minimal action $(X,G)$ denote $\mathfrak{X}$ for the set of all odometers that are factors of $(X,G)$. 
Since the trivial action is an odometer we observe $\mathfrak{X}$ to be non-empty. 
Considering the minimal common extension $\bigvee \mathfrak{X}$ we observe the following.

\begin{proposition}
\label{pro:maximalOdometerFactorExistence}
    Let $(X,G)$ be a minimal action. 
    There exists an odometer $(X_{\operatorname{MOF}},G)$ such that 
    \begin{itemize}
        \item[(a)] $(X_{\operatorname{MOF}},G)$ is a factor of $(X,G)$. 
        \item[(b)] Whenever $(Y,G)$ is an odometer and a factor of $(X,G)$, then $(Y,G)$ is a factor of $(X_{\operatorname{MOF}},G)$. 
    \end{itemize}
    The odometer satisfying (a) and (b) is unique up to conjugacy and called the \emph{maximal odometer factor of $(X,G)$}. 
\end{proposition}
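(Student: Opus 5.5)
We take $X_{\operatorname{MOF}}:=\bigvee\mathfrak{X}$, where $\mathfrak{X}$ is the (non-empty) family of all odometers that are factors of $(X,G)$. Set-theoretically $\mathfrak{X}$ may be a proper class, so we first replace it by a set of representatives. By Theorem \ref{the:eigConjugacyInvariant} each member is determined up to conjugacy by its eigenset $\Eig(Y,G)$, which is a subset of $\subfin(G)$. We therefore work with the set
\[\mathcal{E}:=\{E\in\allFilteredEigensets{G};\, E\subseteq \Eig(X,G)\}.\]
By Theorem \ref{the:eigConjugacyInvariant}(i), an odometer $(Y,G)$ is a factor of $(X,G)$ if and only if $\Eig(Y,G)\in\mathcal{E}$.

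We then define $E^*:=\bigvee\mathcal{E}=\filterhull{(\bigcup_{E\in\mathcal{E}}E)}$. By Proposition \ref{pro:explicitSupremum}(i), $E^*$ is a filtered eigenset. By Remark \ref{rem:eigensetsViaMinimalActions}, $\Eig(X,G)$ is an eigenset containing $\bigcup_{E\in\mathcal{E}}E$, so Proposition \ref{pro:explicitSupremum}(ii) gives $E^*\subseteq \Eig(X,G)$. Let $(X_{\operatorname{MOF}},G)$ be the subodometer associated with $E^*$. By Theorem \ref{the:characterizationOdometers} it is an odometer, since $E^*$ is a filter. Property (a) then follows from $E^*\subseteq\Eig(X,G)$ together with Theorem \ref{the:eigConjugacyInvariant}(i).

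For (b), let $(Y,G)$ be an odometer that is a factor of $(X,G)$. Then $\Eig(Y,G)\in\mathcal{E}$, hence $\Eig(Y,G)\subseteq E^*=\Eig(X_{\operatorname{MOF}},G)$. Theorem \ref{the:eigConjugacyInvariant}(i), applied with $X_{\operatorname{MOF}}$ as the minimal action and $Y$ as the subodometer, shows that $Y$ is a factor of $X_{\operatorname{MOF}}$. (Equivalently, one may invoke Theorem \ref{the:INTROOdometerLattice}(a$_1$) for the family of representatives.)

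For uniqueness, let $(Z,G)$ be another odometer satisfying (a) and (b). Applying (b) for $Z$ with $Y=X_{\operatorname{MOF}}$, and (b) for $X_{\operatorname{MOF}}$ with $Y=Z$, shows that each is a factor of the other. Both are minimal equicontinuous and hence coalescent, so they are conjugated.

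The only point requiring care is the inclusion $E^*\subseteq\Eig(X,G)$. A plain union of filtered eigensets contained in $\Eig(X,G)$ need not be a filter, and $\Eig(X,G)$ itself need not be a filter, so closing under filter hull could a priori leave $\Eig(X,G)$. This is exactly what Proposition \ref{pro:explicitSupremum}(ii) controls: it requires only that $\Eig(X,G)$ be an eigenset, which holds for every minimal action by Remark \ref{rem:eigensetsViaMinimalActions}.
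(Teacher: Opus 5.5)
Your proof is correct and follows the paper's route: the paper takes $X_{\operatorname{MOF}}:=\bigvee\mathfrak{X}$ for the family $\mathfrak{X}$ of odometer factors of $(X,G)$, and reads off (a) and (b) from properties (b$_1$) and (a$_1$) of Theorem~\ref{the:INTROOdometerLattice}; your argument unfolds that theorem's proof at the level of eigensets, including the key use of Proposition~\ref{pro:explicitSupremum}(ii) with $\hat{E}=\Eig(X,G)$. Replacing the class $\mathfrak{X}$ by the set $\mathcal{E}$ of eigensets is harmless, and it matches how the paper's proof of that theorem itself works with $\{\Eig(X,G);\, (X,G)\in\mathfrak{X}\}$.
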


\begin{remark}
    Note that for a subodometer $(X,G)$ that is not an odometer the maximal subodometer factor is $(X,G)$ and differs from the maximal odometer factor. 
    It follows from \cite[Remark 9.4]{hauser2025mean} that the maximal odometer factor of a minimal action $(X,G)$ is the maximal odometer factor of the maximal subodometer factor. 
\end{remark}

Recall that the maximal subodometer factor of a minimal action $(X,G)$ has the same eigenvalues as $(X,G)$. 
We next show how to compute the eigenvalues of the maximal odometer factor. 

\begin{proposition}
    \label{pro:maximalOdometerFactorEigenvalues}
    Let $(X,G)$ be a minimal action and $(X_{\operatorname{MOF}},G)$ be its maximal odometer factor. 
    We have $\Eig(X_{\operatorname{MOF}},G)=\bigcap_{x\in X}\Eig_x(X,G)$. 
\end{proposition}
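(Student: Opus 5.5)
We set $E:=\bigcap_{x\in X}\Eig_x(X,G)$ and show that $E$ is a filtered eigenset whose associated odometer is $(X_{\operatorname{MOF}},G)$. By Theorem \ref{the:characterizationOdometers}, the eigenset of an odometer is filtered and equals $\Eig_y$ at every point. By Theorem \ref{the:eigConjugacyInvariant}, an odometer $(Y,G)$ is a factor of $(X,G)$ if and only if $\Eig(Y,G)\subseteq\Eig(X,G)$. The maximal odometer factor is therefore the odometer associated with the largest filtered eigenset contained in $\Eig(X,G)$. This largest one exists by Theorem \ref{the:completeSublatticeEigensets}: it is the supremum of all filtered eigensets contained in $\Eig(X,G)$, and this supremum is again contained in $\Eig(X,G)$, which is itself an eigenset by Remark \ref{rem:eigensetsViaMinimalActions}. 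So it suffices to show that $E$ is a conjugation invariant filter, and that every filtered eigenset $F\subseteq\Eig(X,G)$ satisfies $F\subseteq E$.

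\emph{$E$ is a conjugation invariant filter.} Each $\Eig_x(X,G)$ is a filter by Theorem \ref{the:scalesForSubodometersViaEigx}, and an intersection of filters is a filter by Proposition \ref{pro:intersectionFiltersAndFilteredEigensets}(i). For conjugation invariance, recall from the proof of Proposition \ref{pro:conhullOfLocalEigenvalues} that $\Eig_{g.x}(X,G)=\{\Gamma^g;\,\Gamma\in\Eig_x(X,G)\}$. Indeed, if $\pi(x)=\Gamma$, then $\pi(g.x)=g\Gamma$, and composing with the conjugacy $G/\Gamma\to G/\Gamma^g$ sending $g\Gamma\mapsto\Gamma^g$ gives a factor map taking $g.x$ to $\Gamma^g$. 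Hence for $\Gamma\in E$ and $g\in G$ we get $\Gamma^g\in\Eig_{g.x}(X,G)$ for every $x$. Since $x\mapsto g.x$ is a bijection of $X$, this gives $\Gamma^g\in E$. By the remark following Remark \ref{rem:eigensetsViaMinimalActions}, $E$ is then a filtered eigenset, and clearly $E\subseteq\Eig(X,G)$.

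\emph{Maximality.} Let $F\subseteq\Eig(X,G)$ be a filtered eigenset, take $\Gamma\in F$, and fix $x\in X$. Since $F$ is core-stable (Theorem \ref{the:characterizationOdometers}(vi)), we have $\Gamma_G\in F\subseteq\Eig(X,G)$. By Proposition \ref{pro:conhullOfLocalEigenvalues}, $\Gamma_G^h\in\Eig_x(X,G)$ for some $h\in G$. But $\Gamma_G$ is normal, so $\Gamma_G\in\Eig_x(X,G)$. Since $\Gamma_G\subseteq\Gamma$ and $\Eig_x(X,G)$ is upward closed, $\Gamma\in\Eig_x(X,G)$. As $x$ was arbitrary, $\Gamma\in E$.

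Putting these together, $E$ is the largest filtered eigenset inside $\Eig(X,G)$, so $\Eig(X_{\operatorname{MOF}},G)=E$ by uniqueness of the maximal odometer factor. The main obstacle is only the bookkeeping of $\Eig_{g.x}$ versus conjugation; the normal core argument removes the dependence on the base point.
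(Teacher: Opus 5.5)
Your proof is correct and follows essentially the paper's argument. You show that $E$ is a conjugation invariant filter, hence a filtered eigenset inside $\Eig(X,G)$ whose odometer is therefore a factor of $(X_{\operatorname{MOF}},G)$; conversely you show that normal subgroups in $\Eig(X,G)$ lie in every $\Eig_x(X,G)$. For that reverse inclusion you pass to normal cores via core-stability and $\Eig(X,G)=\conhull{\Eig_x(X,G)}$, whereas the paper takes a normal scale generating $\Eig(X_{\operatorname{MOF}},G)$ and relocates the base point of each factor map $X\to G/\Gamma$ using normality of $\Gamma$; this is only a cosmetic difference. Your detour through the supremum in Theorem \ref{the:completeSublatticeEigensets} is unnecessary but harmless.
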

\begin{proof}
    Recall from Theorem \ref{the:scalesForSubodometersViaEigx} that $\Eig_x(X,G)$ is a filter for $x\in X$. 
    Furthermore, from Proposition \ref{pro:intersectionFiltersAndFilteredEigensets} we know that the intersection of filters is a filter. 
    Thus, $E:=\bigcap_{x\in X}\Eig_x(X,G)$ is a filter.
    It is straightforward to verify that $E$ is conjugation invariant. 
    We thus observe that $E$ is a filtered eigenset. 
    Denote $(X',G)$ for the $E$-odometer. From $E\subseteq \Eig(X,G)$ we observe that $(X',G)$ is a factor of $(X,G)$ and hence that $E=\Eig(X',G)\subseteq \Eig(X_{\operatorname{MOF}},G)$. 

    For the converse, consider a normal scale $S$ with $\eigenhull{S}=\Eig(X_{\operatorname{MOF}},G)$. 
    For $\Gamma\in S$ there exists a factor map $X\to G/\Gamma$. 
    Since $(X_{\operatorname{MOF}},G)$ is a factor of $(X,G)$ there exists a factor map $\pi\colon X\to G/\Gamma$ and by the normality of $\Gamma$ for $x\in X$ we can modify $\pi$ to enforce $\pi(x)=\Gamma$. 
    We thus have $\Gamma\in \bigcap_{x\in X}\Eig_x(X,G)=E$. 
    This shows $S\subseteq E$. 
    Since $E$ is a filtered eigenset we have $\Eig(X_{\operatorname{MOF}},G)=\eigenhull{S}\subseteq E$. 
\end{proof}

Whenever $(X,G)$ is a subodometer generated by a scale $S$ it is natural to ask how $S$ can be modified to obtain a scale for the maximal odometer factor of $(X,G)$. 
For $\Gamma\in \subfin(G)$ we denote 
\[\Gamma^G:=\grouphull{\bigcup_{g\in G}\Gamma^g}\]
for the \emph{normal hull} of $\Gamma$. 
Note that the normal hull is the smallest normal subgroup of $G$ that contains $\Gamma$. 

\begin{proposition}
    If $(X,G)$ is a subodometer and $S$ a scale that generates $(X,G)$, then $S^G:=\{\Gamma^G;\, \Gamma\in S\}$ is a normal scale that generates $(X_{\operatorname{MOF}},G)$.
\end{proposition}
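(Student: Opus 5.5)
We first check that $S^G$ is a normal scale, and then identify its eigenset with $\Eig(X_{\operatorname{MOF}},G)$ by a double inclusion.

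\emph{$S^G$ is a normal scale.} Each $\Gamma^G$ contains $\Gamma$, so it is a finite index subgroup. It is normal by construction. The map $\Gamma\mapsto\Gamma^G$ is monotone, so the scale property of $S$ transfers. Indeed, for $\Gamma,\Gamma'\in S$ pick $\Lambda\in S$ with $\Lambda\subseteq\Gamma\cap\Gamma'$; then $\Lambda^G\subseteq\Gamma^G\cap\Gamma'^G$. Since $S^G$ is normal, it is conjugation invariant, and hence $\eigenhull{S^G}=\upperhull{(S^G)}$ is a filtered eigenset. By Theorem \ref{the:characterizationOdometers}, the $S^G$-subodometer $(X',G)$ is therefore an odometer.

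\emph{$X'$ is a factor of $X_{\operatorname{MOF}}$.} Let $\Gamma\in S$. Then $\Gamma\subseteq\Gamma^G$, so $\Gamma^G\in\upperhull{S}\subseteq\eigenhull{S}=\Eig(X,G)$. Hence $\eigenhull{S^G}\subseteq\Eig(X,G)$, and by Theorem \ref{the:eigConjugacyInvariant} the odometer $(X',G)$ is a factor of $(X,G)$. By the maximality property (b) of Proposition \ref{pro:maximalOdometerFactorExistence}, $(X',G)$ is a factor of $(X_{\operatorname{MOF}},G)$.

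\emph{$\Eig(X_{\operatorname{MOF}},G)\subseteq\upperhull{(S^G)}$.} This is the key step. Since $X_{\operatorname{MOF}}$ is an odometer, $\Eig(X_{\operatorname{MOF}},G)$ is a conjugation invariant filter. Take $\Lambda\in\Eig(X_{\operatorname{MOF}},G)$. By core-stability (Theorem \ref{the:characterizationOdometers}(vi)) its core $N:=\Lambda_G$ is also an eigenvalue, and $N$ is normal. Because $X_{\operatorname{MOF}}$ is a factor of $X$, we get $N\in\Eig(X,G)=\eigenhull{S}$. So there exist $g\in G$ and $\Gamma\in S$ with $\Gamma\subseteq N^g=N$. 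Since $N$ is normal and the normal hull is the smallest normal subgroup containing $\Gamma$, we obtain $\Gamma^G\subseteq N\subseteq\Lambda$. Thus $\Lambda\in\upperhull{(S^G)}=\eigenhull{S^G}$.

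Combining the two inclusions gives $\Eig(X_{\operatorname{MOF}},G)=\eigenhull{S^G}$. By Corollary \ref{cor:generationAndEigenhull}, $S^G$ generates $(X_{\operatorname{MOF}},G)$.

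The only genuinely non-formal point is the passage from an arbitrary eigenvalue of $X_{\operatorname{MOF}}$ to a normal one, which is needed to absorb the conjugation $g$ and to compare with normal hulls. This is handled by the core-stability of filtered eigensets. Everything else is monotonicity of the hull operations.
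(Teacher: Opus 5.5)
Your proof is correct and follows essentially the same route as the paper: the key step is passing to the normal core $\Lambda_G$ of an odometer eigenvalue to absorb the conjugation, and then comparing with the normal hull $\Gamma^G$, exactly as the paper does. The only cosmetic difference is packaging. You compute $\Eig(X_{\operatorname{MOF}},G)=\eigenhull{S^G}$ by double inclusion and do the core step inline. The paper instead verifies properties (a) and (b) of Proposition \ref{pro:maximalOdometerFactorExistence} for the $S^G$-odometer, citing Lemma \ref{lem:filterHullProperties}(iv) for the step you do inline.
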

\begin{proof}
    Clearly, $S^G$ is normal. 
    Since the map $S\ni \Gamma \mapsto \Gamma^G$ is monotone w.r.t.\ set inclusion we observe that $S^G$ is a scale. 
    Denote $(X^G,G)$ for the odometer generated by $S^G$. 

    It follows from $\Gamma\subseteq \Gamma^G$ for all $\Gamma\in S$ that $S$ dominates $S^G$. Thus, 
    $(X^G,G)$ is a factor of $(X,G)$. 

    Consider a factor $(Y,G)$ of $(X,G)$ that is an odometer. 
    Since $\Eig(Y,G)$ is a filtered eigenset contained in $\Eig(X,G)=\eigenhull{S}$ we observe from Lemma \ref{lem:filterHullProperties} that $\Eig(Y,G)\subseteq \filterhull{S}=\upperhull{S}$.     
    For $\Gamma\in \Eig(Y,G)$ we observe from the core-stability of $\Eig(Y,G)$ that $\Gamma_G\in \Eig(Y,G)\subseteq \upperhull{S}$.
    Thus, there exists $\Lambda\in S$ with $\Lambda\subseteq \Gamma_G$.
    Since $\Gamma_G$ is normal it follows that $\Lambda^G\subseteq (\Gamma_G)^G=\Gamma_G\subseteq \Gamma$. 
    We thus have $\Gamma\in \eigenhull{S^G}=\Eig(X^G,G)$. 
    This shows $\Eig(Y,G)\subseteq \Eig(X^G,G)$ and hence that $(Y,G)$ is a factor of $(X^G,G)$. 

    Thus $(X^G,G)$ satisfies the properties (a) and (b) of Proposition \ref{pro:maximalOdometerFactorExistence} and establishes $(X^G,G)$ as the maximal odometer factor of $(X,G)$. 
\end{proof}

\subsection{The enveloping odometer}
\label{subsec:universalSubodometers_theEnvelopingOdometer}

Consider a subodometer $(X,G)$ and denote $\mathfrak{X}$ for the family of all odometers that are extensions of $(X,G)$. 
$\mathfrak{X}$ is non-empty, since it contains the universal odometer. 
Considering $\bigwedge\mathfrak{X}$ we observe the following from Theorem \ref{the:INTROOdometerLattice}. 

\begin{proposition}
\label{pro:envelopingOdometerExistence}
    Let $(X,G)$ be a subodometer. 
    There exists an odometer $(\envelopingOdometer{X},G)$ such that 
    \begin{itemize}
        \item[(a)] $(X,G)$ is a factor of $(\envelopingOdometer{X},G)$ and 
        \item[(b)] Any odometer $(X',G)$ that is an extension of $(X,G)$ is also an extension of $(\envelopingOdometer{X},G)$. 
    \end{itemize}
    The odometer satisfying (a) and (b) is unique up to conjugacy and called the \emph{enveloping odometer}. 
\end{proposition}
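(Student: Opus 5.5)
The plan is to obtain $(\envelopingOdometer{X},G)$ as $\bigwedge\mathfrak{X}$, where $\mathfrak{X}$ denotes the family of all odometers that are extensions of $(X,G)$, and to read off its properties from the lattice structure of $\allFilteredEigensets{G}$. The family $\mathfrak{X}$ is non-empty because the universal odometer, whose eigenset is $\subfin(G)$, has every subodometer as a factor. Theorem \ref{the:INTROOdometerLattice} then provides the odometer $\bigwedge\mathfrak{X}$, which is a factor of every member of $\mathfrak{X}$; its eigenset is $\bigcap_{X'\in\mathfrak{X}}\Eig(X',G)$ by Theorem \ref{the:completeSublatticeEigensets}.

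For (a), I would pass to eigensets. Each $X'\in\mathfrak{X}$ extends $(X,G)$, so $\Eig(X,G)\subseteq\Eig(X',G)$ for every $X'\in\mathfrak{X}$. Intersecting over $\mathfrak{X}$ gives $\Eig(X,G)\subseteq\Eig(\bigwedge\mathfrak{X},G)$. Since $(X,G)$ is a subodometer, Theorem \ref{the:eigConjugacyInvariant}(i) turns this inclusion into a factor map $\bigwedge\mathfrak{X}\to X$. Equivalently, $(X,G)$ is a common factor of all members of $\mathfrak{X}$, so (b$_2$) of Theorem \ref{the:INTROOdometerLattice} applies directly.

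Property (b) then holds by construction. An odometer $X'$ extending $(X,G)$ lies in $\mathfrak{X}$, and $\bigwedge\mathfrak{X}$ is a factor of it by (a$_2$).

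For uniqueness, suppose $Z$ and $Z'$ are odometers that both satisfy (a) and (b). Applying (b) for $Z$ to $Z'$, which is an odometer extension of $X$ by (a), shows that $Z$ is a factor of $Z'$; symmetrically, $Z'$ is a factor of $Z$. Coalescence of minimal equicontinuous actions then yields a conjugacy between $Z$ and $Z'$. I expect no real obstacle here. The only point needing care is (a): the abstract infimum must genuinely dominate $X$, and this rests on the eigenvalue criterion of Theorem \ref{the:eigConjugacyInvariant}, which applies because $X$ is a subodometer.
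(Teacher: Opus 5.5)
Your proposal is correct and follows the paper's argument. The paper likewise takes $\bigwedge\mathfrak{X}$ over the non-empty family $\mathfrak{X}$ of all odometer extensions of $(X,G)$ (non-empty because of the universal odometer), reads off (a) and (b) from Theorem \ref{the:INTROOdometerLattice}, and leaves implicit the uniqueness step via mutual factors and coalescence that you spell out.
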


A scale of the enveloping odometer is given as follows. 

\begin{proposition}
\label{pro:envelopingOdometerScale}
    If $(X,G)$ is a subodometer and $S$ a scale that generates $(X,G)$, then $S_G:=\{\Gamma_G;\, \Gamma\in S\}$ is a normal scale that generates $(\envelopingOdometer{X},G)$. 
\end{proposition}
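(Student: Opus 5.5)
We have to show that $S_G$ is a normal scale and that $\eigenhull{S_G}=\Eig(\envelopingOdometer{X},G)$. Since $\envelopingOdometer{X}=\bigwedge\mathfrak{X}$ and $\Eig(\bigwedge\mathfrak{X},G)=\bigcap_{X'\in\mathfrak{X}}\Eig(X',G)$, the target is the smallest filtered eigenset containing $\Eig(X,G)=\eigenhull{S}$. This uses Theorem~\ref{the:eigConjugacyInvariant}: odometer extensions of $(X,G)$ correspond to filtered eigensets $E\supseteq \eigenhull{S}$. So it suffices to show two things: $\eigenhull{S_G}$ is a filtered eigenset containing $\eigenhull{S}$, and any filtered eigenset $E\supseteq\eigenhull{S}$ contains $\eigenhull{S_G}$.

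First, $S_G$ is a scale. It is non-empty. For $\Gamma,\Gamma'\in S$ choose $\Lambda\in S$ with $\Lambda\subseteq\Gamma\cap\Gamma'$; monotonicity of $\Gamma\mapsto\Gamma_G$ gives $\Lambda_G\subseteq\Gamma_G\cap\Gamma'_G$. Each $\Gamma_G$ is normal and of finite index by Remark~\ref{rem:fundamentalDomainsAndConjugation}, so $S_G$ is a normal scale. By Theorem~\ref{the:characterizationOdometers}, it generates an odometer, and $\eigenhull{S_G}=\upperhull{(S_G)}$ is a filtered eigenset. Since $\Gamma_G\subseteq\Gamma$, the scale $S_G$ dominates $S$. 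By Remark~\ref{rem:equivalentScalesYieldSameSubodometer}, $\eigenhull{S}\subseteq\eigenhull{S_G}$, i.e.\ $(X,G)$ is a factor of the $S_G$-odometer. This gives property (a).

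Next, let $(X',G)$ be an odometer extending $(X,G)$, and put $E:=\Eig(X',G)$. Then $E$ is a filtered eigenset with $\eigenhull{S}\subseteq E$. For $\Gamma\in S$ we have $\Gamma\in E$. The set $E$ is conjugation invariant (Proposition~\ref{pro:conhullOfLocalEigenvalues}) and a filter, hence core-stable by Proposition~\ref{pro:coreStableNormalConIinvInterplay}(iii). Therefore $\Gamma_G\in E$. So $S_G\subseteq E$, and because $E$ is upward closed and conjugation invariant, $\eigenhull{S_G}\subseteq E$. By Theorem~\ref{the:eigConjugacyInvariant}, the $S_G$-odometer is a factor of $(X',G)$, which is property (b).

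By the uniqueness in Proposition~\ref{pro:envelopingOdometerExistence}, together with coalescence, the $S_G$-odometer is conjugated to $(\envelopingOdometer{X},G)$. The only step requiring care is the core-stability step, i.e.\ that a filter which is conjugation invariant contains the cores of its members. This holds because only finitely many conjugates occur and $E$ is a scale and upward closed. It is exactly Proposition~\ref{pro:coreStableNormalConIinvInterplay}(iii), so I expect no real obstacle.
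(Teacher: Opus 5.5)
Your proposal is correct and follows essentially the same route as the paper. You show that $S_G$ is a normal scale dominating $S$, which gives (a); you use that $\Eig(X',G)$ of any odometer extension is a conjugation invariant, hence core-stable, filter, so $S_G\subseteq\Eig(X',G)$ and therefore $\eigenhull{S_G}\subseteq\Eig(X',G)$, which gives (b); and you conclude by the uniqueness in Proposition~\ref{pro:envelopingOdometerExistence}. Your opening reformulation via $\bigcap_{X'\in\mathfrak{X}}\Eig(X',G)$ is correct but not needed, since you then verify (a) and (b) directly.
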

\begin{proof}
    Clearly, $S_G$ is normal. 
    Since the map $S\ni \Gamma\mapsto \Gamma_G$ is monotone w.r.t.\ set inclusion, we observe that $S_G$ is a scale.
    Denote $(X_G,G)$ for the $S_G$-odometer. 
    We have $\Gamma_G\subseteq \Gamma$ for all $\Gamma\in S$ and hence $S_G$ dominates $S$. Thus $(X,G)$ is a factor of $(X_G,G)$. 

    Let $(X',G)$ be an extension of $(X,G)$ that is an odometer. 
    It follows from Theorem \ref{the:eigConjugacyInvariant} that $S\subseteq \Eig(X,G)\subseteq \Eig(X',G)$. 
    From Theorem \ref{the:characterizationOdometers} we know that $\Eig(X',G)$ is a core-stable filter and hence we have $S_G\subseteq \Eig(X',G)$. 
    Since $\Eig(X',G)$ is conjugation invariant and upward closed we have
    $\eigenhull{S_G}\subseteq \Eig(X',G)$ and Proposition \ref{pro:scaleEigensetTransition} yields $\Eig(\envelopingOdometer{X},G)\subseteq \Eig(X',G)$. 
    From Theorem \ref{the:eigConjugacyInvariant} we observe that $(X',G)$ is an extension of $(X,G)$. 

    This shows that $(X_G,G)$ satisfies (a) and (b) of Proposition \ref{pro:envelopingOdometerExistence} and hence is conjugated to the enveloping odometer. 
\end{proof}

\begin{proposition}
\label{pro:envelopingOdometerMetrizable}
    A subodometer $(X,G)$ is metrizable if and only if its enveloping odometer $(\envelopingOdometer{X},G)$ is metrizable. 
\end{proposition}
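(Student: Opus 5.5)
The plan is to use the characterization of metrizability through countability of eigensets (Theorem \ref{the:characterizationMetrizableSubodometers}), applied to both $(X,G)$ and $(\envelopingOdometer{X},G)$. Both are subodometers, so each is metrizable if and only if its eigenset is countable, or equivalently if and only if it is generated by a countable scale.

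For the easy direction, assume $(\envelopingOdometer{X},G)$ is metrizable. By Proposition \ref{pro:envelopingOdometerExistence}(a), $(X,G)$ is a factor of it. Theorem \ref{the:eigConjugacyInvariant} then gives $\Eig(X,G)\subseteq \Eig(\envelopingOdometer{X},G)$, and the latter is countable. Hence $\Eig(X,G)$ is countable and $(X,G)$ is metrizable by Theorem \ref{the:characterizationMetrizableSubodometers}. Alternatively, a factor of a metrizable compact space under a continuous surjection is metrizable, but the eigenvalue route stays within the paper's framework.

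For the converse, assume $(X,G)$ is metrizable.
\begin{enumerate}
\item By Theorem \ref{the:characterizationMetrizableSubodometers}(iii), $(X,G)$ is generated by a countable scale $S$ (for instance a chain).
\item By Proposition \ref{pro:envelopingOdometerScale}, $S_G=\{\Gamma_G;\,\Gamma\in S\}$ is a normal scale generating $(\envelopingOdometer{X},G)$. Here $\Gamma_G$ is of finite index by Remark \ref{rem:fundamentalDomainsAndConjugation}.
\item $S_G$ is the image of the countable set $S$ under $\Gamma\mapsto\Gamma_G$, so it is countable.
\item Applying the implication (iii)$\Rightarrow$(i) of Theorem \ref{the:characterizationMetrizableSubodometers} to $S_G$ gives that $(\envelopingOdometer{X},G)$ is metrizable.
\end{enumerate}
Equivalently, $\Eig(\envelopingOdometer{X},G)=\eigenhull{S_G}$ is countable by Lemma \ref{lem:eigenhullInheritsCountability}.

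There is no real obstacle here. The only point needing care is that Proposition \ref{pro:envelopingOdometerScale} applies to an arbitrary generating scale, so it can be applied to a countable one. Its proof contains a typo in the last line ("extension of $(X,G)$" should read "extension of $(X_G,G)$"), but the statement itself is sound and can be relied on.
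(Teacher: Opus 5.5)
Your proof is correct and follows the paper's argument. For the converse, you pass from a countable generating scale $S$ to the countable normal scale $S_G$, which generates the enveloping odometer by Proposition~\ref{pro:envelopingOdometerScale}, and then invoke Theorem~\ref{the:characterizationMetrizableSubodometers}; your eigenvalue-based easy direction only repackages the paper's observation that a factor of a metrizable odometer is metrizable, and your remark about the misprint in the proof of Proposition~\ref{pro:envelopingOdometerScale} (where $(X_G,G)$ is meant) is accurate.
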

\begin{proof}
    Clearly, if $(\envelopingOdometer{X},G)$ is metrizable, then so is its factor $(X,G)$. 
    For the converse assume that $(X,G)$ is metrizable. 
    From Theorem \ref{the:characterizationMetrizableSubodometers} we know that $(X,G)$ is generated by a countable scale $S$ and hence also $S_G$ is countable. 
    Since $S_G$ generates $(\envelopingOdometer{X},G)$ we observe from Theorem \ref{the:characterizationMetrizableSubodometers} that $(\envelopingOdometer{X},G)$ is metrizable.     
\end{proof}

We will next present the interplay of the enveloping odometer and the Ellis semigroup $(E(X),G)$ of a minimal action. 
For the definition and various properties of the Ellis semigroup see \cite[Section 3]{auslander1988minimal}. 

\begin{proposition}
\label{pro:EllisSemiGroupAndEnvelopingOdometers}
    Let $(X,G)$ be a minimal action. 
    \begin{itemize}
        \item[(i)] $(X,G)$ is a subodometer if and only if $(E(X),G)$ is an odometer.
        \item[(ii)] If $(X,G)$ is a subodometer, then $(E(X),G)$ and $(\envelopingOdometer{X},G)$ are conjugated. 
    \end{itemize}     
\end{proposition}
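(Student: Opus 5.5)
For (ii), and the forward direction of (i), I will work inside the product. Let $(X,G)$ be a subodometer. For a minimal equicontinuous action, $E(X)$ is a compact group of homeomorphisms of $X$, and it acts on itself via $g\mapsto$ the map $x\mapsto g.x$. It is the closure of $G$ in $X^X$, and the topology there coincides with the topology of uniform convergence (Auslander, Ch.~3/4). So $(E(X),G)$ is the minimal rotation $g.p=\phi(g)p$, where $\phi(g)$ is the homeomorphism induced by $g$. The remaining points are that $E(X)$ is totally disconnected, and that it matches $\envelopingOdometer{X}$.

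My plan is to realize $E(X)$ as an inverse limit, with $(X,G)$ assumed to be the $S$-subodometer via Theorem \ref{the:scalesForSubodometersViaEigx}. For each $\Gamma\in S$ the projection $X\to G/\Gamma$ induces a continuous surjective homomorphism $E(X)\to E(G/\Gamma)$. Here $E(G/\Gamma)$ is the image of $G$ in $\operatorname{Sym}(G/\Gamma)$, namely $G/\Gamma_G$, because the kernel of $G\to\operatorname{Sym}(G/\Gamma)$ is exactly the core $\Gamma_G$. These maps are compatible, so they define an equivariant continuous map
\[
\Psi\colon E(X)\to \varprojlim_{\Gamma\in S} G/\Gamma_G .
\]
Equicontinuity gives injectivity of $\Psi$: if $p,q\in E(X)$ agree on every finite quotient $G/\Gamma$, then they agree on $X$, since $X$ embeds in $\prod_\Gamma G/\Gamma$ and the actions are coordinatewise. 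By minimality and compactness $\Psi$ is onto, so it is a conjugacy. The target is the $S_G$-odometer, which by Proposition \ref{pro:envelopingOdometerScale} is conjugated to $\envelopingOdometer{X}$. This gives (ii), and it also shows that $E(X)$ is an odometer. The main technical point is checking that $\Psi$ is well-defined and continuous. For that I will use that evaluation in $E(X)\subseteq X^X$ is coordinatewise, and that each factor map $\pi_\Gamma$ induces a semigroup homomorphism of Ellis semigroups (a standard fact). I also have to check that the compatibility maps between the $G/\Gamma_G$ are the canonical $\pi^{\Gamma_G}_{\Lambda_G}$.

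For the converse in (i), assume that $(E(X),G)$ is an odometer. For any $x\in X$, the map $E(X)\to X$, $p\mapsto p(x)$, is a factor map, since it is equivariant and continuous and $X$ is minimal. By Proposition \ref{pro:factorsOfSubodometers}, $(X,G)$ is then a subodometer. The step I expect to be most delicate is the identification of the topology on $E(X)$ with an inverse-limit topology without invoking too much Ellis theory. If that gets messy, I will instead note that $E(X)$ embeds equivariantly in $\prod_{x\in X}X$, and that the closure of the orbit of the identity is a minimal equicontinuous subsystem of a product of Stone spaces. That already makes $E(X)$ a subodometer. The group structure then makes it a rotation, and regularity combined with Theorem \ref{the:characterizationOdometers} gives the odometer property. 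Finally, I will identify its eigenvalues as $\eigenhull{S_G}$, by noting that $\Eig_{\mathrm{id}}(E(X),G)$ consists of the $\Gamma$ containing some $\Lambda_G$ with $\Lambda\in S$.
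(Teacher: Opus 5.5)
Your proposal is correct, but for the forward direction of (i) and for (ii) it follows a different route from the paper.

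\textbf{The paper's route.} It argues abstractly. A factor map $\envelopingOdometer{X}\to X$ induces, by Auslander's Theorem 3.7, a factor map $E(\envelopingOdometer{X})\to E(X)$. Since $\envelopingOdometer{X}$ is a rotation, $E(\envelopingOdometer{X})$ is conjugated to $\envelopingOdometer{X}$. Hence $E(X)$ is a factor of an odometer, so a subodometer, and, being a rotation by Auslander's Section 3, an odometer. Part (ii) then follows from property (b) of the enveloping odometer applied to $E(X)$, together with coalescence.

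\textbf{Your route.} You build an explicit conjugacy $\Psi\colon E(X)\to\varprojlim_{\Gamma\in S}G/\Gamma_G$ and then invoke Proposition \ref{pro:envelopingOdometerScale}. This works.
\begin{itemize}
\item The induced maps $\theta_\Gamma\colon E(X)\to E(G/\Gamma)\cong G/\Gamma_G$ are continuous equivariant semigroup homomorphisms.
\item Compatibility follows from uniqueness of the induced maps.
\item Surjectivity follows from minimality of the target.
\item Injectivity follows from $\pi_\Gamma(px)=\theta_\Gamma(p)\pi_\Gamma(x)$ together with the embedding $X\hookrightarrow\prod_{\Gamma\in S}G/\Gamma$. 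So it is the inverse-limit structure, not equicontinuity, that does the work here.
\end{itemize}
One bookkeeping point: your limit is indexed by $S$ rather than by $S_G$, and distinct incomparable $\Gamma,\Gamma'\in S$ can have the same core. Directedness of $S$ forces the corresponding coordinates to agree, so your limit is canonically conjugated to the $S_G$-odometer.

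\textbf{Comparison.} Your route yields the inverse-limit representation of $E(X)$ directly; the paper obtains it only by combining (ii) with Proposition \ref{pro:envelopingOdometerScale}. It also does not need the fact that the Ellis semigroup of a minimal equicontinuous action is a group rotation, because the rotation structure comes for free from the conjugacy with an inverse limit of finite rotations. It likewise avoids a separate coalescence argument in (ii). The paper's route is shorter and uses only the universal properties already established.

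Your converse direction, via the evaluation map $p\mapsto p(x)$ and Proposition \ref{pro:factorsOfSubodometers}, is the same as the paper's. The fallback paragraph is unnecessary. Its final claim about $\Eig_{\mathrm{id}}(E(X),G)$ would need its own proof, but your main argument never relies on it.
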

\begin{proof} 
(i): 
    Assume that $(E(X),G)$ is an odometer. 
    By \cite[Theorem 3.6]{auslander1988minimal} there exists a factor map $E(X)\to X$ and it follows from Proposition \ref{pro:factorsOfSubodometers} that $(X,G)$ is a subodometer.  

    For the converse assume that $(X,G)$ is a subodometer. 
    Since $(X,G)$ is a minimal equicontinuous action it follows from \cite[Section 3]{auslander1988minimal} that $(E(X),G)$ is a minimal rotation. 
    Consider a factor map $\pi\colon \envelopingOdometer{X}\to X$. 
    From \cite[Theorem 3.7]{auslander1988minimal} we know that there exists a factor map $(E(\envelopingOdometer{X}),G)\to (E(X),G)$. 
    Since $\envelopingOdometer{X}$ is a group rotation, we observe that $(\envelopingOdometer{X},G)$ and $(E(\envelopingOdometer{X}),G)$ are conjugated and hence there exists a factor map $\envelopingOdometer{X}\to E(X)$. 
    Since factors of subodometers are subodometers, it follows from Proposition \ref{pro:factorsOfSubodometers} that $(E(X),G)$ is a subodometer. Since it is a rotation it is an odometer. 

(ii): 
    Let $(X,G)$ be a subodometer. As presented in the proof of (i) there exists a factor map $\envelopingOdometer{X}\to E(X)$. 
    Furthermore, from (i) we know that $E(X)$ is an odometer. Since $(X,G)$ is a factor of $(E(X),G)$ there exists a factor map $E(X)\to \envelopingOdometer{X}$. 
    It follows from the coalescence of minimal equicontinuous actions that $(E(X),G)$ and $(\envelopingOdometer{X},G)$ are conjugated. 
\end{proof}

\bibliographystyle{alpha}
\bibliography{ref}
\end{document}